\newtheorem{thm}{Theorem}[section]
\newtheorem{prop}[thm]{Proposition}
\newtheorem{cor}[thm]{Corollary}
\newtheorem{lem}[thm]{Lemma}
\theoremstyle{definition}
\newtheorem{defn}[thm]{Definition}
\theoremstyle{remark}
\newtheorem{rmk}[thm]{Remark}
\begin{document}

\title{Shimura's Vector-Valued Modular Forms, Weight Changing Operators, and Laplacians}

\author{Shaul Zemel}

\maketitle

\begin{abstract}
We investigate the various types of weight raising and weight lowering operators on quasi-modular forms, or equivalently on Shimura's vector-valued modular forms involving symmetric power representations. We also present all the eigenfunctions of the two possible Laplacian operators.
\end{abstract}

\section*{Introduction}

The systematic study of quasi-modular forms has started with the paper \cite{[KZ]}, which determined the structure of the ring of holomorphic scalar-valued quasi-modular forms on $SL_{2}(\mathbb{Z})$. This ring contains, in particular, the usual modular forms, as well as the classical example of a quasi-modular form---the holomorphic Eisenstein series of weight 2 on $SL_{2}(\mathbb{Z})$. It is the most natural ring containing the ring of modular forms on $SL_{2}(\mathbb{Z})$ that is also closed under differentiation (see, e.g., \cite{[MR1]}). The reference \cite{[A]} obtained some results for modular and quasi-modular forms on a larger class of Fuchsian groups. The prequel \cite{[Ze1]} to the current paper then showed how quasi-modular forms are related to the vector-valued modular forms defined in \cite{[Sh]} (and previously, in a different language, in \cite{[E]}), that involve symmetric powers of the standard representation, and established some properties of these vector-valued modular forms. These objects complement other generalizations of modular forms, such as (scalar-valued and vector-valued) modular forms of arbitrary weight (appearing in \cite{[Kn1]}, \cite{[N]}, and many others), mock modular forms (first uncovered by \cite{[Zw]}, then expanded by \cite{[BO]} and others, including the development in \cite{[BFu]} of the theory of the closely related harmonic weak Maass forms), or modular forms of higher order (see \cite{[CDO]} for the initial definition, and \cite{[DS]} for a classification result).

Classical (not necessarily holomorphic) modular forms are operated on by the non-holomorphic weight changing operators, named after Maa\ss\ and Shimura. Explicitly, the operator $\delta_{k}=\frac{\partial}{\partial\tau}+\frac{k}{2iy}$ sends modular forms of weight $k$ to modular forms of weight $k+2$, and $4y^{2}\frac{\partial}{\partial\overline{\tau}}$ decreases the weight by 2 (but annihilates holomorphic and meromorphic functions). Their Lie-theoretic origin is explained in \cite{[V]}. On the other hand, the usual differentiation preserves quasi-modularity and increases the weight again by 2, but also increases the depth by 1 (see \cite{[MR1]}). The first goal of this paper is to interpret these results in terms of the vector-valued modular forms of Shimura, and deduce some of their properties. In particular we show that while every weight raising operator $\delta_{l}$ increases the weight $k$ and the depth $d$ (meaning that the transformation law of the modular form in question involves additional $d$ functions) of a quasi-modular form by 2 and 1 respectively, the operator $\delta_{k-d}$ leaves the depth unchanged. As a corollary we simplify the proof of the existence and the uniqueness of the Rankin--Cohen brackets for quasi-modular forms appearing in \cite{[MR2]}. This result for classical modular forms (i.e., the case $d=0$) is known long ago, where the initial construction of the Rankin--Cohen brackets was carried out by looking for the combination of the differentiations that preserve modularity. In the setting of this paper they should alternatively be viewed as the combinations of weight raising operators that preserves holomorphicity. This approach may be more intuitive, since modularity is harder to preserve and holomorphicity is easier to check.

A very important operator in the theory of modular forms is the Laplacian of the appropriate weight. It's Lie-theoretic origin is the Casimir operator of an $\mathfrak{sl}_{2}$-triple. As already mentioned in \cite{[V]}, the weight changing operators form, together with the multiplication-by-weight operator, an $\mathfrak{sl}_{2}$-triple. Another $\mathfrak{sl}_{2}$-triple appears implicitly in \cite{[A]} for holomorphic quasi-modular forms. Every such $\mathfrak{sl}_{2}$-triple produces naturally an invariant operator, namely the Casimir or Laplacian operator (though the normalization of the latter usually differs from that of the former). We prove that the only $\mathfrak{sl}_{2}$-triples that can be defined using our weight changing operators are those from \cite{[V]} and \cite{[A]} (up to normalization), so that the two Laplacian operators arising from these references are the only possible such operators.

Once a Laplacian is given in some setting, the most natural question to ask is what are its eigenvalues and eigenfunctions. This question is well-known for the classical modular forms, in various settings (e.g., those of eigenvalue 0 are the harmonic weak Maass forms mentioned above). We determine in this paper all the eigenspaces of the two Laplacians from the previous paragraph. We remark that the analysis of the eigenspaces in depth $d$ becomes more difficult when the weight is an integer between $d+1$ and $2d$, a case which was also shown in \cite{[Ze1]} to be more delicate (e.g., this is the case where the dimension formulae from that reference depend on whether the Fuchsian group has cusps or not). Interestingly, this investigation leads to a special variant of the sesqui-harmonic modular forms defined, for example, in \cite{[BDR]}.

Our results about eigenfunctions can be described quite explicitly. The Laplacian operator from \cite{[V]} is the classical Laplacian operator of weight $k$ from the theory of modular forms, which is already known to have the usual modular eigenforms and the meromorphic quasi-modular forms as eigenfunctions. It follows that images of meromorphic quasi-modular forms under the weight raising operators are also eigenfunctions, and we prove that these examples already encompass \emph{all} the quasi-modular eigenfunctions of this Laplacian. The Laplacian from \cite{[A]} is simpler (it involves only differentiation of order 1), and its quasi-modular eigenfunctions are just holomorphic derivatives of (arbitrary) modular forms, plus the complex conjugates of some nearly meromorphic quasi-modular forms in case $k$ is one of the more delicate weights mentioned above.

\smallskip

This paper is divided into two sections. Section \ref{OpsQMFVVMF} describes our differential operators in the various settings, and proves the result about $\mathfrak{sl}_{2}$-triples. In Section \ref{LapEigen} we present the action of the two Laplacians, and determine their eigenspaces.

\smallskip

I would like to thank M. Neururer, during the conversation with whom I realized that this project could be carried out. I am also very grateful to the anonymous referees for many valuable suggestions and corrections.

\section{Operators on Quasi-Modular Forms and on Vector-Valued Modular Forms \label{OpsQMFVVMF}}

In this Section we describe the weight changing operators on the various spaces of modular and quasi-modular forms considered in \cite{[Ze1]}.

\subsection{Operators on Quasi-Modular Forms}

The Lie group $SL_{2}(\mathbb{R})$ has a well-known operation on the Poincar\'{e} upper half-plane $\mathcal{H}=\{\tau=x+iy\in\mathbb{C}|y>0\}$ via fractional linear transformations. Explicitly, the action of $\gamma=\big(\begin{smallmatrix} a & b \\ c & d\end{smallmatrix}\big)$ takes $\tau\in\mathcal{H}$ to $\gamma\tau=\frac{a\tau+b}{c\tau+d}$, with the factor of automorphy $j(\gamma,\tau)=c\tau+d$. We shall also use the notation $j_{\gamma}(\tau)$ for $j(\gamma,\tau)$, so that the lower left entry of $\gamma$ is just the derivative $j_{\gamma}'$ of $j_{\gamma}$ (independently of $\tau$). We shall work, as in \cite{[Ze1]}, with arbitrary Fuchsian groups $\Gamma \leq SL_{2}(\mathbb{R})$, namely discrete subgroups such that the quotient $\Gamma\backslash\mathcal{H}$ has finite volume with respect to the $SL_{2}(\mathbb{R})$-invariant measure $\frac{dxdy}{y^{2}}$.

Let $k\in\mathbb{Z}$ be a weight, and let $\rho$ be a representation of a Fuchsian subgroup $\Gamma$ of $SL_{2}(\mathbb{R})$ on some finite-dimensional complex vector space $V_{\rho}$. In fact, all of our definitions and results will hold in a general context: $k$ may be in $\frac{1}{2}\mathbb{Z}$ if $\Gamma$ is a subgroup of the metaplectic double cover of $SL_{2}(\mathbb{R})$, or alternatively $k$ can be an arbitrary real (or even complex) number if we take $\rho$ to be a (possibly vector-valued) multiplier system of weight $k$. For the definitions of the few notions required for these generalizations see, e.g., Subsection 1.1 of \cite{[Ze1]}. We do remark that an equivalent approach to multiplier systems in general weights can be obtained by considering subgroups $\Gamma$ of the universal covering group $\widetilde{SL}_{2}(\mathbb{R})$ (a realization of which can be obtained by replacing the metaplectic data $\sqrt{j(\gamma,\tau)}$ attached to some $\gamma \in SL_{2}(\mathbb{R})$ by a choice of $\log j(\gamma,\tau)$), and taking representations of such groups. The paper \cite{[Ze1]} has defined, extending previous definitions of \cite{[KZ]}, \cite{[MR1]}, and others, a \emph{quasi-modular form} of weight $k$, depth $d$, and representation (or multiplier system) $\rho$ with respect to $\Gamma$ to be a function $f:\mathcal{H} \to V_{\rho}$ for which there exist functions $f_{r}$, $0 \leq r \leq d$, with $f_{d}\neq0$, such that the functional equation
\begin{equation}
f(\gamma\tau)=\sum_{r=0}^{d}j_{\gamma}(\tau)^{k-r}(j_{\gamma}')^{r}\rho(\gamma)f_{r}(\tau) \label{QMFdef}
\end{equation}
holds for every $\gamma\in\Gamma$ and $\tau\in\mathcal{H}$. One typically requires some continuity or analyticity properties of the $f_{r}$'s, as we indeed do as well in Definition \ref{MFQMFdef} below. The assumption $f_{d}\neq0$ is made so that the depth of $f$ is precisely $d$ (otherwise it is smaller than $d$). Setting $\gamma=I$ in Equation \eqref{QMFdef} shows that $f_{0}=f$. A \emph{modular form} of weight $k$ and representation (or multiplier system) $\rho$ with respect to $\Gamma$, satisfying just
$f(\gamma\tau)=j_{\gamma}(\tau)^{k}\rho(\gamma)f(\tau)$ for every such $\tau$ and $\gamma$, is just a quasi-modular form of depth 0. Note that the transformation law from \cite{[Ze1]} (appearing in Equation (4) of that reference) allows for an anti-holomorphic power of $j(\gamma,\tau)$ to appear as well, while we restrict attention only to holomorphic powers. However, it is well-known that powers of $y=\Im\tau$ can always be used to avoid the anti-holomorphic powers, so that by appropriately normalizing all the differential operators these powers contribute no extra value to the theory. We adopt from \cite{[Ze1]} the following notation and definition.
\begin{defn}
Denote by $\mathcal{M}_{k}^{an}(\rho)$ the space of real-analytic modular forms of weight $k$ and representation $\rho$. The spaces $\mathcal{M}_{k}^{sing}(\rho)$, $\mathcal{M}_{k}^{hol}(\rho)$, and $\mathcal{M}_{k}^{mer}(\rho)$ are defined similarly, but with ``real--analytic'' replaced by ``real-analytic except for discrete singularities'', ``holomorphic'', and ``meromorphic'' respectively. For $\mathcal{M}_{k}^{sing}(\rho)$ we allow only singularities in discrete sets of points $\{\sigma\}_{\sigma\in\Sigma}$ such that around each $\sigma\in\Sigma$ the function grows to $\infty$ polynomially in $\frac{1}{|\tau-\sigma|}$. In case $\Gamma$ has cusps, elements of $\mathcal{M}_{k}^{hol}(\rho)$ are required to be holomorphic at the cusps, and we also have the space $\mathcal{M}_{k}^{cusp}(\rho)$ of cusp forms and the space $\mathcal{M}_{k}^{wh}(\rho)$ of weakly holomorphic modular forms (i.e., those modular forms that are holomorphic on $\mathcal{H}$ but may have poles at the cusps, i.e., whose expansion using the local variable at some cusp may involve a pole). In this case we shall assume that elements of $\mathcal{M}_{k}^{an}(\rho)$ may grow at the cusp, but at most exponentially (i.e., we demand the growth condition $F(\alpha\tau)=O(e^{cy})$ for some real number $c$ as $y\to\infty$ wherever $\alpha\infty$ is a cusp of $\Gamma$). The space of quasi-modular forms (of arbitrary depth) with the appropriate differential properties is denoted similarly, but with the symbol $\mathcal{M}$ replaced by $\widetilde{\mathcal{M}}$. The subspace $\widetilde{\mathcal{M}}_{k}^{*,\leq d}$ of $\widetilde{\mathcal{M}}_{k}^{*}$ (for $*$ being any of the superscripts indicating differential properties as above) denotes the space of those quasi-modular forms whose depth is bounded by $d$. We recall that a function is called \emph{nearly holomorphic}, or \emph{nearly meromorphic}, if it can be written as a polynomial in $\frac{1}{2iy}$ over the ring of holomorphic (resp. meromorphic) functions. We denote the spaces of nearly holomorphic and nearly meromorphic modular forms of weight $k$ and representation $\rho$ by $\mathcal{M}_{k}^{nh}(\rho)$ and $\mathcal{M}_{k}^{nm}(\rho)$ respectively. By $\mathcal{M}_{k}^{nh,\leq d}(\rho)$ and $\mathcal{M}_{k}^{nm,\leq d}(\rho)$ we mean those nearly holomorphic (resp. nearly meromorphic) modular forms whose \emph{depth}, defined to be their degree as a polynomial in $\frac{1}{2iy}$, is bounded by $d$. \label{MFQMFdef}
\end{defn}

\smallskip

We recall from \cite{[MR1]} that the ring $\bigoplus_{k\in\mathbb{Z}}\widetilde{\mathcal{M}}_{k}^{hol}\big(SL_{2}(\mathbb{Z})\big)$ of holomorphic quasi-modular forms on $SL_{2}(\mathbb{Z})$ is closed under the holomorphic differentiation $\partial_{\tau}=\frac{\partial}{\partial\tau}$, an operation that increases the weight by 2 and the depth by 1. On the other hand, an operation that preserves modularity, but not holomorphicity, is the weight raising operator $\delta_{k}=\partial_{\tau}+\frac{k}{2iy}$, also increasing the weight by 2. The ring $\bigoplus_{k\in\mathbb{Z}}\mathcal{M}_{k}^{nh}\big(SL_{2}(\mathbb{Z})\big)$ of nearly holomorphic modular forms on $SL_{2}(\mathbb{Z})$ is closed under the appropriate $\delta_{k}$ operations, and is canonically isomorphic to $\bigoplus_{k\in\mathbb{Z}}\widetilde{\mathcal{M}}_{k}^{hol}\big(SL_{2}(\mathbb{Z})\big)$ as $\mathbb{C}$-algebras that are graded (by the weight), filtered (by the depth), and differential (with the usual derivative and the $\delta_{k}$'s). The explicit isomorphism is given by the restriction to $f_{0}$ and $F_{0}$ of the maps from Theorem \ref{QMFVVMF} below. For the proof of this result, which we shall generalize in this paper to arbitrary Fuchsian groups and arbitrary representations or multiplier systems (scalar or vector-valued), see Lemma 118 and Propositions 131, 132, and 135 of \cite{[MR1]}. In addition, when we leave the holomorphic/meromorphic world, the weight lowering operator $y^{2}\partial_{\overline{\tau}}$ (where $\partial_{\overline{\tau}}=\frac{\partial}{\partial\overline{\tau}}$) also becomes of interest, as it decreases the weight of modular forms by 2. The paper \cite{[A]} considers an operator on (holomorphic) quasi-modular forms that lowers the weight by 2 and the depth by 1 by sending a quasi-modular form $f$, with functions $f_{r}$, $0 \leq r \leq d$ as in Equation \eqref{QMFdef}, to $f_{1}$. Indeed, Lemma 1.1 of \cite{[Ze1]} (generalizing Lemma 119 of \cite{[MR1]} and Proposition 2 of \cite{[A]}) shows that if $f\in\widetilde{\mathcal{M}}_{k}^{*,\leq d}(\rho)$ then $f_{r}\in\widetilde{\mathcal{M}}_{k-2r}^{*,\leq d-r}(\rho)$ for every $0 \leq r \leq d$, and that the associated function with index $0 \leq h \leq d-r$ is just $\binom{r+h}{r}f_{r+h}$, i.e., Equation \eqref{QMFdef} for $f_{r}$ becomes
\begin{equation}
f_{r}(\gamma\tau)=\sum_{h=0}^{d-r}j_{\gamma}(\tau)^{k-2r-h}(j_{\gamma}')^{h}\binom{r+h}{r}\rho(\gamma)f_{r+h}(\tau). \label{frQMF}
\end{equation}
We summarize these assertions as follows.
\begin{prop}
Let $k$, $\Gamma$, and $\rho$ be as above, let $d$ be a depth bound, and take $f\in\widetilde{\mathcal{M}}_{k}^{*,\leq d}(\rho)$ for $*$ being $an$ or $sing$, with associated functions $f_{r}$, $0 \leq r \leq d$. \begin{enumerate}[$(i)$]
\item We have $\partial_{\tau}f\in\widetilde{\mathcal{M}}_{k+2}^{*,\leq d+1}(\rho)$. The $r$th function associated to this quasi-modular form is $\partial_{\tau}f_{r}+(k+1-r)f_{r-1}$.
\item We also have $\frac{f}{-2iy}\in\widetilde{\mathcal{M}}_{k+2}^{*,\leq d+1}(\rho)$ with $\big(\frac{f}{-2iy}\big)_{r}$ being $\frac{f_{r}}{-2iy}+f_{r-1}$.
\item For $\delta_{k-d}f$ the $r$th respective function is $\delta_{k-d}f_{r}+(d+1-r)f_{r-1}$, so that $\delta_{k-d}f\in\widetilde{\mathcal{M}}_{k+2}^{*,\leq d}(\rho)$.
\item The function $y^{2}\partial_{\overline{\tau}}f$ is in $\widetilde{\mathcal{M}}_{k-2}^{*,\leq d}(\rho)$, with the $r$th function $y^{2}\partial_{\overline{\tau}}f_{r}$.
\item The map $f \mapsto f_{1}$ takes elements of $\widetilde{\mathcal{M}}_{k}^{*,\leq d}(\rho)$ to $\widetilde{\mathcal{M}}_{k-2}^{*,\leq d-1}(\rho)$.
\end{enumerate} \label{difopQMF}
\end{prop}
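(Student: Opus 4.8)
The plan is to derive each assertion directly from the defining functional equation \eqref{QMFdef}: I apply the relevant operator to both sides, use the chain rule together with the elementary transformation rules for the building blocks $j_\gamma$, $j_\gamma'$, and $y$, and then re-collect the result into the shape $\sum_s j_\gamma^{(k\pm2)-s}(j_\gamma')^s\rho(\gamma)g_s(\tau)$, so as to read off both the new depth bound and the associated functions $g_s$ (which come out manifestly independent of $\gamma$, certifying quasi-modularity). Part $(v)$ is immediate: it is precisely the $r=1$ instance of the cited Lemma 1.1 of \cite{[Ze]}, so no further argument is needed.

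For part $(i)$ I would differentiate \eqref{QMFdef} in $\tau$. On the left the chain rule gives $(\partial_\tau f)(\gamma\tau) = j_\gamma^2\,\partial_\tau[f(\gamma\tau)]$, since $\frac{d(\gamma\tau)}{d\tau}=j_\gamma^{-2}$; on the right I use $\partial_\tau j_\gamma = j_\gamma'$ (a constant in $\tau$), so that each summand contributes one term with the power of $j_\gamma'$ raised by one and one term with $f_r$ replaced by $\partial_\tau f_r$. After multiplying by $j_\gamma^2$ and relabelling indices, the coefficient of $j_\gamma^{(k+2)-s}(j_\gamma')^s\rho(\gamma)$ is exactly $\partial_\tau f_s + (k+1-s)f_{s-1}$, with $s$ ranging up to $d+1$. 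Part $(iv)$ is of the same flavour but cleaner: applying $\partial_{\overline\tau}$ to \eqref{QMFdef} and using that $\gamma\tau$ is holomorphic (so $\partial_{\overline\tau}(\gamma\tau)=0$ and $\partial_{\overline\tau}\overline{\gamma\tau}=\overline{j_\gamma}^{-2}$) yields $(\partial_{\overline\tau}f)(\gamma\tau)=\overline{j_\gamma}^2\sum_r j_\gamma^{k-r}(j_\gamma')^r\rho(\gamma)\partial_{\overline\tau}f_r$; multiplying by $\mathrm{Im}(\gamma\tau)^2 = y^2/(j_\gamma\overline{j_\gamma})^2$ cancels the $\overline{j_\gamma}^2$ factor exactly, leaving weight $k-2$, the \emph{same} depth bound $d$, and associated functions $y^2\partial_{\overline\tau}f_r$.

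The subtle point, and the one I expect to be the main obstacle, is part $(ii)$, because multiplication by $\frac{1}{-2iy}$ does \emph{not} visibly preserve the purely holomorphic power structure of \eqref{QMFdef}: from $\mathrm{Im}(\gamma\tau)=y/|j_\gamma|^2$ one gets a stray anti-holomorphic factor, leaving the offending coefficient $\tfrac{j_\gamma\overline{j_\gamma}}{-2iy}$. The key is the identity $\overline{j_\gamma}=j_\gamma-2iy\,j_\gamma'$, which follows from $j_\gamma-\overline{j_\gamma}=j_\gamma'(\tau-\overline\tau)$ together with $\tau-\overline\tau=2iy$. Substituting it turns that coefficient into $\tfrac{j_\gamma^2}{-2iy}+j_\gamma j_\gamma'$, both of whose pieces carry only holomorphic powers of $j_\gamma$; re-collecting then gives depth $\leq d+1$ with $s$th associated function $\frac{f_s}{-2iy}+f_{s-1}$.

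Finally, part $(iii)$ is the payoff and follows by combining $(i)$ and $(ii)$, writing $\delta_{k-d}f=\partial_\tau f-(k-d)\frac{f}{-2iy}$. Adding the two lists of associated functions, the first two terms assemble into $\delta_{k-d}f_s$ and the coefficient of $f_{s-1}$ becomes $(k+1-s)-(k-d)=d+1-s$. The decisive observation is that the weight $k-d$ is tuned exactly so that the top associated function vanishes: the would-be $(d+1)$st function is $\delta_{k-d}f_{d+1}+(d+1-(d+1))f_d=0$, since $f_{d+1}=0$. More generally the same computation with $\delta_l$ produces top function $(k-d-l)f_d$, which vanishes identically only for $l=k-d$, explaining why this is the unique weight-raising operator leaving the depth bound unchanged at $d$.
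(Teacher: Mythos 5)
Your proof is correct and follows essentially the same route as the paper: differentiating or multiplying the functional equation \eqref{QMFdef}, using the key identity $\overline{j_{\gamma}}=j_{\gamma}-2iyj_{\gamma}'$ for part $(ii)$, combining $(i)$ and $(ii)$ with the vanishing top coefficient for $(iii)$, exploiting holomorphy of $j_{\gamma}$ for $(iv)$, and citing Lemma 1.1 of \cite{[Ze]} for $(v)$. Your closing remark that $l=k-d$ is the unique index for which $\delta_{l}$ preserves the depth bound is a nice addition that the paper only states later, in the proof of Theorem \ref{RCbrack}.
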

Note that for $r=0$ or for $r=d$ the formulae here involve the function $f_{-1}$ or $f_{d+1}$. These functions are naturally defined to be 0, the value with which all the formulae hold.
\begin{proof}
Part $(i)$ follows, as in Lemma 118 of \cite{[MR1]}, from the simple observation that $\partial_{\tau}(\gamma\tau)=\frac{1}{j(\gamma,\tau)^{2}}$, after differentiating Equation \eqref{QMFdef} with respect to $\tau$. Recalling that $\Im(\gamma\tau)=\frac{y}{|j(\gamma,\tau)|^{2}}$, we multiply Equation \eqref{QMFdef} by $\frac{|j(\gamma,\tau)|^{2}}{2iy}$ and observe that $\overline{j(\gamma,\tau)}=j(\gamma,\tau)-2iyj_{\gamma}'$ to deduce part $(ii)$. Part $(iii)$ is a consequence of parts $(i)$ and $(ii)$, with the extra observation that the associated function with index $d+1$ vanishes. Part $(iv)$ follows simply by applying $y^{2}\partial_{\overline{\tau}}f_{r}$ to Equation \eqref{QMFdef}, since $j_{\gamma}$ is holomorphic while $j_{\gamma}'$ and $\rho(\gamma)$ are constants that are independent of $\tau$. Part $(v)$ was already seen above to be contained in Lemma 1.1 of \cite{[Ze1]}.
\end{proof}

In particular, the observations that $\delta_{k}$ is a map from $\mathcal{M}_{k}^{*}(\rho)$ to $\mathcal{M}_{k+2}^{*}(\rho)$ and $y^{2}\partial_{\overline{\tau}}$ sends $\mathcal{M}_{k}^{*}(\rho)$ to $\mathcal{M}_{k-2}^{*}(\rho)$ (again with $*$ being just $an$ or $sing$) are the special cases of parts $(iii)$ and $(iv)$ respectively of Proposition \ref{difopQMF} in which $d=0$.

\smallskip

Before we turn to the other objects appearing in \cite{[Ze1]}, we show how Proposition \ref{difopQMF} can be used for obtaining a short and direct proof for the construction of Rankin--Cohen brackets on quasi-modular forms appearing in \cite{[MR2]}. Moreover, our argument generalizes the result of \cite{[MR2]} to a much broader context, since only the scalar-valued case with $\Gamma$ a congruence subgroup is considered in that reference (though the proof does not use these facts). In particular, this argument simplifies (and generalizes) the proof of the properties of the classical Rankin--Cohen brackets on modular forms, as the depth 0 case. We remark that in our convention the set $\mathbb{N}$ of natural numbers includes 0.

Consider two weights $k$ and $l$, two depths $d$ and $e$ from $\mathbb{N}$, and a parameter $n\in\mathbb{N}$. We call this set of parameters \emph{special} if both $d-k$ and $e-l$ are in $\mathbb{N}$ and simultaneously $n$ satisfies $\max\{d-k,e-l\}<n \leq d+e-k-l+1$, and \emph{generic} otherwise. Our result is proved for the generic case, and see Remark \ref{RC2dim} for the special situation.
\begin{thm}
Assume that the combination of the weights $k$ and $l$, the depths $d$ and $e$, and the number $n\in\mathbb{N}$ is generic. Then there exists a linear combination $[\cdot,\cdot]_{n;k,d;l,e}$, unique up to global scalar multiplication, of the bilinear operators of the form $(f,g)\mapsto\partial_{\tau}^{r}f\otimes\partial_{\tau}^{n-r}g$ with $0 \leq r \leq n$, for which if $f\in\widetilde{\mathcal{M}}_{k}^{*,\leq d}(\rho)$ and $g\in\widetilde{\mathcal{M}}_{l}^{*,\leq e}(\eta)$ (with $\rho$ and $\eta$ representations of the same group $\Gamma$, or multiplier systems corresponding to the weights $k$ and $l$ respectively), then $[f,g]_{n;k,d;l,e}$ lies in $\widetilde{\mathcal{M}}_{k+l+2n}^{*,\leq d+e}(\rho\otimes\eta)$. In one normalization we have \[[f,g]_{n;k,d;l,e}=\sum_{r=0}^{n}(-1)^{r}\binom{n}{r}\Bigg[\prod_{j=r}^{n-1}(k-d+j)\cdot\prod_{q=n-r}^{n-1}(l-e+q)\Bigg]
(\partial_{\tau}^{r}f\otimes\partial_{\tau}^{n-r}g).\] Here $*$ can be any of the types $an$, $sing$, $mer$, $hol$, and for $\Gamma$ with cusps also $wh$ or $cusp$. Moreover, if $\Gamma$ has cusps, $n\geq1$, and $*=hol$, then the resulting function $[f,g]_{n;k,d;l,e}$ is in $\widetilde{\mathcal{M}}_{k+l+2n}^{cusp,\leq d+e}(\rho\otimes\eta)$. \label{RCbrack}
\end{thm}

\begin{proof}
We recall that the weights and depths of quasi-modular forms are additive with respect to (tensor) products. Now, part $(iii)$ of Proposition \ref{difopQMF} shows that $\delta_{k-d}$ is the only differential operator of the form $\delta_{l}$ that does not increase the depth bound on elements of $\widetilde{\mathcal{M}}_{k}^{sing,\leq d}(\rho)$. Therefore the combinations of images of $f$ and $g$ in the designated spaces that land in $\widetilde{\mathcal{M}}_{k+l+2n}^{sing,\leq d+e}(\rho\otimes\eta)$ (without increasing depth further) are the tensor products $\delta_{k-d}^{r}f\otimes\delta_{l-e}^{n-r}g$, where $\delta_{m}^{s}$ is the natural combination $\delta_{m+2s-2}\circ\ldots\circ\delta_{m}$. However, these tensor products would involve derivatives of $f$ and $g$, but also division by $2iy$. We are therefore looking for the combinations not involving powers of $\frac{1}{2iy}$. In fact, this observation already implies that the Rankin--Cohen brackets on $\widetilde{\mathcal{M}}_{k}^{*,\leq d}(\rho)$ and $\widetilde{\mathcal{M}}_{l}^{*,\leq e}(\eta)$ must coincide with the classical Rankin--Cohen brackets on $\mathcal{M}_{k-d}^{*}(\rho)$ and $\mathcal{M}_{l-e}^{*}(\eta)$ (and they are, of course, independent of the representations $\rho$ and $\eta$).

Now, the power $\delta_{w}^{m}$ has an explicit expression in terms of $\partial_{\tau}$ and $\frac{1}{2iy}$, given in, e.g.,  Equation (56) of \cite{[Za]}. It reads $\delta_{w}^{m}\varphi=\sum_{u=0}^{m}\binom{m}{u}\big[\prod_{q=m-u}^{m-1}(w+q)\big]\frac{\partial_{\tau}^{m-u}\varphi}{(2iy)^{u}}$, where the empty product appearing for $u=0$ is defined to be 1. This formula is easily proved by induction on $m$. Therefore if $\{a_{r}\}_{r=0}^{n}$ are complex coefficients and $f$ and $g$ are as above, then the element $\sum_{r=0}^{n}\binom{n}{r}a_{r}\cdot\delta_{k-d}^{r}f\otimes\delta_{l-e}^{n-r}g$ is, explicitly, \[\sum_{r=0}^{n}\sum_{i=0}^{r}\sum_{p=0}^{n-r}\binom{n}{r}a_{r}\binom{r}{i}\binom{n-r}{p}\prod_{j=r-i}^{r-1}\!(k-d+j)\!
\!\prod_{q=n-r-p}^{n-r-1}\!\!(l-e+q)\frac{\partial_{\tau}^{r-i}f\otimes\partial_{\tau}^{n-r-p}g}{(2iy)^{i+p}}.\] After the summation index changes $t=i+p$ and $s=r-i$ (i.e., $r=s+i$ and $p=t-i$) and simple manipulations of the binomial coefficients, the latter expression becomes
\[\sum_{t=0}^{n}\sum_{s=0}^{n-t}\frac{n!}{s!t!(n\!-\!t\!-\!s)!}\bigg[\sum_{i=0}^{t}\binom{t}{i}a_{s+i}\!\!\prod_{j=s}^{s+i-1}\!\!(k-d+j)\!
\!\!\prod_{q=n-s-t}^{n-s-i-1}\!\!\!(l-e+q)\bigg]\frac{\partial_{\tau}^{s}f\otimes\partial_{\tau}^{n-t-s}g}{(2iy)^{t}}.\] The terms with $t=0$ give just $\sum_{s=0}^{n}\binom{n}{s}a_{s}\partial_{\tau}^{s}f\otimes\partial_{\tau}^{n-s}g$. The vanishing of the terms with $t=1$ yields $a_{s}(l-e+n-s-1)+a_{s+1}(k-d+s)=0$ for each $0 \leq s<n$, and we claim that in the generic case these equations already determine all the coefficients $a_{s}$ up to a global constant. This is clear if either $d-k$ or $e-l$ is not an integer between 0 and $n-1$. Moreover, if these two numbers are integers between 0 and $n-1$ the claim is still valid if $n \geq d+e-k-l+2$. Indeed, in this case we get the vanishing of $a_{s}$ for any $s \leq d-k$ and for any $s \geq n-e+l$, but the remaining coefficients still satisfy linear equations of co-dimension 1. It is easily seen that with $a_{r}=(-1)^{r}\binom{n}{r}\big[\prod_{j=r}^{n-1}(k-d+j)\cdot\prod_{q=n-r}^{n-1}(l-e+q)\big]$ the desired equalities hold, and we have $a_{r}\neq0$ for some $r$ (by the same considerations). Plugging this expression for $a_{s+i}$ into the coefficient of $\frac{\partial_{\tau}^{s}f\otimes\partial_{\tau}^{n-t-s}g}{(2iy)^{t}}$ above, we find that the sum over $i$ above becomes \[\prod_{j=s}^{n-1}(k-d+j)\cdot\prod_{q=n-t-s}^{n-1}(l-e+q)\cdot\bigg[\sum_{i=0}^{t}\binom{t}{i}(-1)^{s+i}\bigg],\] which is $(-1)^{s}$ times the binomial expansion of $(1-1)^{t}$ and thus vanishes for $t>0$ (and reduces trivially to $(-1)^{s}$ for $t=0$). Hence the combination with these $a_{r}$'s yields an operator of the desired form (i.e., involving only powers of $\partial_{\tau}$). Since $\partial_{\tau}$ preserves holomorphicity and does not generate new singularities, the superscript $sing$ can be replaced by any other superscript from Definition \ref{MFQMFdef}. Moreover, if $\Gamma$ has cusps then for $n>0$ the Rankin--Cohen brackets map $\widetilde{\mathcal{M}}_{k}^{hol,\leq d}(\rho)\otimes\widetilde{\mathcal{M}}_{l}^{hol,\leq e}(\eta)$ into $\widetilde{\mathcal{M}}_{k+l+2n}^{cusp,\leq d+e}(\rho\otimes\eta)$ since derivatives annihilate constant coefficients at the cusps.
\end{proof}

\begin{rmk}
In the case where the set of parameters from Theorem \ref{RCbrack} is special, one can show that the space of solutions to the equations involving the coefficients $a_{s}$ then has dimension 2. One solution has $a_{r}=\frac{(n-1-d+k)!(e-l+r-n)!}{(r-1-d+k)!(e-l)!}$ for $r \geq d-k+1$ and $a_{r}=0$ otherwise, and another solution is defined by taking $a_{r}=\frac{(n-1-e+l)!(d-k-r)!}{(n-1-r-e+l)!(d-k)!}$ if $r \leq n-e+l-1$ and $a_{r}=0$ otherwise. These solutions are linearly independent because we assume that $d-k+1>n-e+l-1$. One can verify that both these two solutions indeed give rise to holomorphic Rankin--Cohen brackets. The reason for this is \emph{Bol's identity} (which is a special case of Equation (56) of \cite{[Za]} mentioned above), from which it follows that when $d-k\in\mathbb{N}$ and $n \geq s>d-k$, the operator $\delta_{k-d}^{s}$ is just $\delta_{d-k+2}^{s-d+k-1}\circ\partial_{\tau}^{d-k+1}$. Similarly, if $e-l\in\mathbb{N}$ then $\delta_{l-e}^{n-s}$ is just $\delta_{e-l+2}^{n-s-e+l-1}\circ\partial_{\tau}^{e-l+1}$ wherever $n \geq n-s>e-l$ (this explains the vanishing of $a_{r}$ for $r \leq d-k$ in the former case and for $r \geq n-e+l$ in the latter one). Hence if $d-k$ is an integer between 0 and $n-1$, then $[f,g]_{n;k,d;l,e}$ is a constant multiple of $[\partial_{\tau}^{d-k+1}f,g]_{n-d+k-1;2d-k+2,d;l,e}$, and in case $e-l$ is an integer in that interval the former Rankin--Cohen brackets give a multiple of $[f,\partial_{\tau}^{e-l+1}g]_{n-e+l-1;k,d;2e-l+2,e}$ (the exact change of indices is explained by part $(iii)$ of Proposition \ref{difopQMF} and Bol's identity). Combining these cases, if both $d-k$ and $e-l$ are integers and $n \geq d+e-k-l+2$, then $[f,g]_{n;k,d;l,e}$ becomes a multiple of $[\partial_{\tau}^{d-k+1}f,\partial_{\tau}^{e-l+1}g]_{n-d-e+k+l-2;2d-k+2,d;2e-l+2,e}$ (one can also verify this using the formulae for the coefficients). Therefore when $n$ combines with the other parameters to a special set, the two bilinear operations sending $f$ and $g$ either to $[\partial_{\tau}^{d-k+1}f,g]_{n-d+k-1;2d-k+2,d;l,e}$ or to $[f,\partial_{\tau}^{e-l+1}g]_{n-e+l-1;k,d;2e-l+2,e}$ satisfy the desired properties, and they are linearly independent. \label{RC2dim}
\end{rmk}

We remark that the Leibnitz rule for Rankin--Cohen brackets appearing in Theorem 2 of \cite{[MR2]} can also be established using the non-holomorphic operators $\delta_{m}$. However, for this assertion the original proof from \cite{[MR2]} is much simpler.

\subsection{Operators on Vector--Valued Modular Forms}

Let $V_{m}$ be the $m$th symmetric power of the standard representation of $SL_{2}(\mathbb{R})$ on $\mathbb{C}^{2}$, and let $k$, $\Gamma$, and $\rho$ be as above. We allow ourselves the abuse of notation in which $V_{m}$ also denotes the vector space underlying the representation $V_{m}$, and we write elements of that vector space as sums of products of elements of $\mathbb{C}^{2}$ of total degree $m$. The paper \cite{[Ze1]} investigated the spaces $\mathcal{M}_{k}^{*}(V_{m}\otimes\rho)$ for various differential conditions $*$, and proved the following result (see Proposition 1.2 and Theorem 2.3 of that reference), which in particular extends Propositions 132 and 133 of \cite{[MR1]}.
\begin{thm}
For $*=an$ or $*=sing$ the spaces $\widetilde{\mathcal{M}}_{k}^{*,\leq m}(\rho)$, $\bigoplus_{s=0}^{m}\mathcal{M}_{k-2s}^{*}(\rho)$, and $\mathcal{M}_{k-m}^{*}(V_{m}\otimes\rho)$ are canonically isomorphic. The map from the first space to the second takes a quasi-modular form $f$ with functions $f_{r}$, $0 \leq r \leq m$ to \[F_{s}:\tau\mapsto\sum_{r=s}^{m}\binom{r}{s}\frac{f_{r}(\tau)}{(2iy)^{r-s}}\in\mathcal{M}_{k-2s}^{*}(\rho),\quad 0 \leq s \leq m.\] The inverse map sends the sequence $(F_{s})_{s=0}^{m}$ to the quasi-modular form \[f=f_{0}\quad\mathrm{with\ the\ functions}\quad f_{r}:\tau\mapsto\sum_{s=r}^{m}\binom{s}{r}\frac{F_{s}(\tau)}{(-2iy)^{s-r}},\quad 0 \leq r \leq m.\] Assuming that $f$ and $(F_{s})_{s=0}^{m}$ are related in this manner, the associated element $F\in\mathcal{M}_{k-m}^{*}(V_{m}\otimes\rho)$ is the vector-valued modular form defined by
\[F(\tau)=\sum_{s=0}^{m}\frac{F_{s}(\tau)}{(-2iy)^{s}}\binom{\tau}{1}^{m-s}\binom{\overline{\tau}}{1}^{s}=\sum_{r=0}^{m}f_{r}(\tau)\binom{\tau}{1}^{m-r}\binom{1}{0}^{r}\] (taking values in $V_{m} \otimes V_{\rho}$). Given such an $F$, the corresponding elements of $\widetilde{\mathcal{M}}_{k}^{*,\leq m}(\rho)$ and $\bigoplus_{s=0}^{m}\mathcal{M}_{k-2s}^{*}(\rho)$ are obtained by simply taking components in the appropriate bases of $V_{m}$. Moreover, $f$ is holomorphic (or meromorphic, or cuspidal, or weakly holomorphic) if and only if the $f_{r}$'s and $F$ have the same property. \label{QMFVVMF}
\end{thm}
For another point of view on the last assertion of Theorem \ref{QMFVVMF}, see the last part of Remark \ref{altpf} below. As already mentioned above (and in \cite{[MR1]}), for $f\in\widetilde{\mathcal{M}}_{k}^{hol,\leq m}(\rho)$ the modular form $F_{s}$ with $0 \leq s \leq m$ is not holomorphic, but rather nearly holomorphic. More precisely, it belongs to $\mathcal{M}_{k-2s}^{nh,\leq m-s}(\rho)$. Similarly, if $f$ is in $\widetilde{\mathcal{M}}_{k}^{mer,\leq m}(\rho)$, then $F_{s}\in\mathcal{M}_{k-2s}^{nm,\leq m-s}(\rho)$.

\smallskip

For every $m$ there is a map $i_{m}:V_{m} \to V_{m+1}$ defined through multiplication by $\binom{\tau}{1}$. Corollary 2.2 of \cite{[Ze1]} shows that applying it to modular forms yields a map, also denoted by $i_{m}$, from $\mathcal{M}_{k-m}^{*}(V_{m}\otimes\rho)$ to $\mathcal{M}_{k-m-1}^{*}(V_{m+1}\otimes\rho)$. The following immediate consequence of Theorem \ref{QMFVVMF} is also implicitly contained in \cite{[Ze1]}.
\begin{cor}
The map $i_{m}:\mathcal{M}_{k-m}^{*}(V_{m}\otimes\rho)\to\mathcal{M}_{k-m-1}^{*}(V_{m+1}\otimes\rho)$ commutes with the trivial inclusion of $\widetilde{\mathcal{M}}_{k}^{*,\leq m}(\rho)$ into $\widetilde{\mathcal{M}}_{k}^{*,\leq m+1}(\rho)$ and with the natural embedding of $\bigoplus_{s=0}^{m}\mathcal{M}_{k-2s}^{*}(\rho)$ into $\bigoplus_{s=0}^{m+1}\mathcal{M}_{k-2s}^{*}(\rho)$. The image of the latter map consists precisely of those sequences whose last coordinate vanishes. \label{depthim}
\end{cor}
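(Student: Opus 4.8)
The plan is to read off the effect of $i_{m}$, which is multiplication by $\binom{\tau}{1}$, directly from the two explicit expressions for the associated vector-valued form $F$ recorded in Theorem \ref{QMFVVMF}. The statement concerns only compatibility of the canonical isomorphisms with $i_{m}$; the fact that $i_{m}$ actually maps $\mathcal{M}_{k-m}^{*}(V_{m}\otimes\rho)$ into $\mathcal{M}_{k-m-1}^{*}(V_{m+1}\otimes\rho)$ is already granted (Corollary 2.2 of \cite{[Ze]}), so no modularity need be re-verified and everything reduces to an identity of polynomial expansions in $V_{m+1}$.

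First I would multiply the expression $F(\tau)=\sum_{s=0}^{m}\frac{F_{s}(\tau)}{(-2iy)^{s}}\binom{\tau}{1}^{m-s}\binom{\overline{\tau}}{1}^{s}$ by $\binom{\tau}{1}$. This raises the exponent of $\binom{\tau}{1}$ in the $s$th term from $m-s$ to $m+1-s$, leaving $\binom{\overline{\tau}}{1}^{s}$ and the scalar coefficient $\frac{F_{s}}{(-2iy)^{s}}$ untouched. Comparing the outcome with the analogous expansion attached to a sequence $(G_{s})_{s=0}^{m+1}$ in $V_{m+1}$, and using that the products $\binom{\tau}{1}^{m+1-s}\binom{\overline{\tau}}{1}^{s}$, $0\leq s\leq m+1$, are linearly independent in $V_{m+1}$ (since $\tau\neq\overline{\tau}$ throughout $\mathcal{H}$), I can read off coefficients term by term: $G_{s}=F_{s}$ for $0\leq s\leq m$, while the top term $\binom{\overline{\tau}}{1}^{m+1}$ is absent, so $G_{m+1}=0$. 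This is exactly the natural embedding $(F_{0},\dots,F_{m})\mapsto(F_{0},\dots,F_{m},0)$, which proves commutativity with the direct-sum inclusion and simultaneously identifies the image of that embedding---and hence of $i_{m}$---as those sequences whose last coordinate vanishes.

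Next I would repeat the computation on the second expression $F(\tau)=\sum_{r=0}^{m}f_{r}(\tau)\binom{\tau}{1}^{m-r}\binom{1}{0}^{r}$: multiplication by $\binom{\tau}{1}$ again raises the power of $\binom{\tau}{1}$ from $m-r$ to $m+1-r$ while preserving $\binom{1}{0}^{r}$ and $f_{r}$. Matching this against the expansion associated to a depth $\leq m+1$ quasi-modular form $g$ with functions $g_{r}$, $0\leq r\leq m+1$, and using independence of the products $\binom{\tau}{1}^{m+1-r}\binom{1}{0}^{r}$, I obtain $g_{r}=f_{r}$ for $0\leq r\leq m$ and $g_{m+1}=0$, which is precisely the trivial inclusion of $f$ into $\widetilde{\mathcal{M}}_{k}^{*,\leq m+1}(\rho)$. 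This completes the second commutativity assertion. I expect no substantial obstacle: the one point requiring care is the linear independence of the relevant families of products in $V_{m+1}$, which is what legitimizes the term-by-term comparison of coefficients; once that is granted, the corollary follows by inspection of the two formulas in Theorem \ref{QMFVVMF}.
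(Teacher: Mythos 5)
Your proof is correct and matches the paper's intent: the paper offers no separate argument, treating the corollary as an immediate consequence of Theorem \ref{QMFVVMF}, and your verification---multiplying both explicit expansions by $\binom{\tau}{1}$ and comparing coefficients in the monomial basis of $V_{m+1}$---is exactly that intended reading. The linear-independence point you flag is indeed the only thing to check, and it holds since $\binom{\tau}{1}$, $\binom{\overline{\tau}}{1}$ (respectively $\binom{\tau}{1}$, $\binom{1}{0}$) span $\mathbb{C}^{2}$, so their degree-$(m+1)$ monomials form a basis of $V_{m+1}$.
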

The symbol $*$ in Corollary \ref{depthim} can be, in fact, any of the ones appearing in Definition \ref{MFQMFdef} (except on the spaces of sequences).

\smallskip

We now turn to operations on direct sums like $\bigoplus_{s=0}^{m}\mathcal{M}_{k-2s}^{*}(\rho)$ that change the weight. One such operation that increases the weight by 2 is letting, for each $s$, a multiple of $\delta_{k-2s}$ act on the $s$-th component. Another one is replacing $F_{s}$ by a multiple of $F_{s-1}$ (with $F_{-1}$ being 0). Any linear combination of these two operations will also do. For lowering the weight, one option is applying a scalar multiple of $y^{2}\partial_{\overline{\tau}}$ on each component. We may also replace $F_{s}$ by a multiple of $F_{s+1}$ (where $F_{m+1}=0$), or take any linear combination of these operations. We now find what are the operations on this space that correspond to those from Proposition \ref{difopQMF} under the isomorphism from Theorem \ref{QMFVVMF}.
\begin{prop}
Let $f\in\widetilde{\mathcal{M}}_{k}^{*,\leq m}(\rho)$ be as above, again with $*=an$ or $*=sing$, and denote the $m$-tuple of modular forms that is associated to $f$ in Theorem \ref{QMFVVMF} by $(F_{s})_{s=0}^{m}\in\bigoplus_{s=0}^{m}\mathcal{M}_{k-2s}^{*}(\rho)$, where we also set $F_{-1}=F_{m+1}=0$.
\begin{enumerate}[$(i)$]
\item Assuming that the depth of $f$ is at most $m-1$, the sequence corresponding to $\partial_{\tau}f\in\widetilde{\mathcal{M}}_{k+2}^{*,\leq m}(\rho)$ is $\big(\delta_{k-2s}F_{s}+(k+1-s)F_{s-1}\big)_{s=0}^{m}$.
\item Under the same assumption on $f$, the function $\frac{f}{-2iy}$ is associated with the translated sequence $(F_{s-1})_{s=0}^{m}$ (with vanishing component for $s=0$).
\item For any $f\in\widetilde{\mathcal{M}}_{k}^{*,\leq m}(\rho)$ the element $\delta_{k-m}f\in\widetilde{\mathcal{M}}_{k+2}^{*,\leq m}(\rho)$ is assigned to $\big(\delta_{k-2s}F_{s}+(m+1-s)F_{s-1}\big)_{s=0}^{m}$.
\item The quasi-modular form $y^{2}\partial_{\overline{\tau}}f$ corresponds to $\big(y^{2}\partial_{\overline{\tau}}F_{s}(\tau)+\frac{s+1}{4}F_{s+1}\big)_{s=0}^{m}$.
\item Considering $f_{1}$ as a quasi-modular form in $\widetilde{\mathcal{M}}_{k-2}^{*,\leq m-1}(\rho)$, the associated sequence is $\big((s+1)F_{s+1}\big)_{s=0}^{m}$ (with vanishing $m$th function).
\end{enumerate} \label{difopseq}
\end{prop}
Note that in part $(iii)$ here elements of depth precisely $m$ are allowed (unlike the previous parts), and for such elements $\delta_{k-m}$ does not increase the depth.

\begin{proof}
We recall from Theorem \ref{QMFVVMF} that $F_{s}(\tau)=\sum_{r=s}^{m}\binom{r}{s}\frac{f_{r}(\tau)}{(2iy)^{r-s}}$ (so that indeed $F_{m}=0$ if and only if $f\in\widetilde{\mathcal{M}}_{k}^{*,\leq m-1}(\rho)$). We do the same transformation on the sequences of functions appearing in the various parts of Proposition \ref{difopQMF}, and express the results in terms of the original functions $F_{s}$. In part $(i)$, with $F_{m}=0$, we consider $\sum_{r=s}^{m}\binom{r}{s}\frac{\partial_{\tau}f_{r}(\tau)+(k+1-r)f_{r-1}(\tau)}{(2iy)^{r-s}}$. Comparing the first term with index $r=s$ with the definition of $F_{s}$, the effect of the derivative should be related to $\delta_{k-2s}F_{s}$. Recalling that $\delta_{l}=y^{-l}\partial_{\tau}y^{l}$, we get the useful formula
\begin{equation}
\delta_{l}\bigg(\frac{g(\tau)}{y^{t}}\bigg)=\frac{\delta_{l-t}g(\tau)}{y^{t}}. \label{deltalyt}
\end{equation}
From Equation \eqref{deltalyt} it follows that \[\delta_{k-2s}F_{s}(\tau)=\sum_{r=s}^{m}\binom{r}{s}\frac{\delta_{k-r-s}f_{r}(\tau)}{(2iy)^{r-s}}=
\sum_{r=s}^{m}\binom{r}{s}\frac{\partial_{\tau}f_{r}(\tau)+\frac{k-r-s}{2iy}f_{r}(\tau)}{(2iy)^{r-s}}.\] Therefore subtracting $\delta_{k-2s}F_{s}$ from the desired function indeed cancels the terms involving $\partial_{\tau}f_{r}$, and after replacing the summation index in the remaining terms of the desired function, we see that the difference becomes just
\[\tau\mapsto\sum_{r=s-1}^{m-1}\bigg[(k-r)\binom{r+1}{s}-\binom{r}{s}(k-r-s)\bigg]\frac{f_{r}(\tau)}{(2iy)^{r-s+1}}.\] Note that changing the range of summation is valid, since the term with $s-1$ in the part coming from $\delta_{k-2s}F_{s}$ vanishes (since $\binom{s-1}{s}=0$), and the terms with $s=m$ can also be omitted since we also assume that $f_{m}=0$. The coefficient in brackets can be simplified to $(k-r)\binom{r}{s-1}+s\binom{r}{s}$, and the second term $\frac{r!}{(s-1)!(r-s)!}$ can also be written as $(r+1-s)\binom{r}{s-1}$ (this is valid for $r=s-1$ as well, since then both $\binom{r}{s}$ and the coefficient $r+1-s$ vanish and $(r-s)!$ in the denominator becomes infinite). We thus get a global coefficient of $k+1-s$ times $\sum_{r=s-1}^{m-1}\binom{r}{s-1}\frac{f_{r}(\tau)}{(2iy)^{r-s+1}}$, which proves part $(i)$ since the latter sum is $F_{s-1}$ by definition.

For part $(ii)$ we have to evaluate the expression $\sum_{r=s}^{m}\binom{r}{s}\frac{f_{r-1}(\tau)-(f_{r}/2iy)}{(2iy)^{r-s}}$ arising from part $(ii)$ of Proposition \ref{difopQMF}, which after a similar summation index change becomes $\sum_{r=s-1}^{m-1}\big[\binom{r+1}{s}-\binom{r}{s}\big]\frac{f_{r}(\tau)}{(2iy)^{r-s+1}}$. The required assertion is thus a consequence of the definition of $F_{s-1}$. Part $(iii)$ follows from parts $(i)$ and $(ii)$, and the extension to quasi-modular forms of depth $m$ is well-defined since the undesired function with index $m+1$ vanishes because of the coefficient $m+1-s$ (in correspondence with the depth assertion in part $(iii)$ of Proposition \ref{difopQMF}). Next, we evaluate the difference between the expansion $\sum_{r=s}^{m}\binom{r}{s}\frac{y^{2}\partial_{\overline{\tau}}f_{r}(\tau)}{(2iy)^{r-s}}$ from part $(iv)$ of Proposition \ref{difopQMF} and $y^{2}\partial_{\overline{\tau}}F_{s}(\tau)$ as \[-\sum_{r=s}^{m}\binom{r}{s}f_{r}(\tau) \cdot y^{2}\frac{\partial(\tau-\overline{\tau})^{s-r}}{\partial\overline{\tau}}=-\sum_{r=s}^{m}\binom{r}{s}(r-s)f_{r}(\tau)\frac{y^{2}}{(2iy)^{r+1-s}}.\] Since this simplifies to $+\frac{s+1}{4}\sum_{r=s}^{m}\binom{r}{s+1}\frac{f_{r}(\tau)}{(2iy)^{r-1-s}}=+\frac{s+1}{4}F_{s+1}(\tau)$ by the considerations from above, this establishes part $(iv)$ as well. Finally, Equation \eqref{frQMF} directs us to consider the sum $\sum_{r=s}^{m}\binom{r}{s}\frac{(r+1)f_{r+1}(\tau)}{(2iy)^{r-s}}$ for part $(v)$ here, where we can omit the term with $r=m$ since $f_{m+1}=0$. The combinatorial coefficient coincides with $(s+1)\binom{r+1}{s+1}$, and a summation index change now identifies this sum as the desired expression $(s+1)F_{s+1}$.
\end{proof}

\smallskip

Next, we consider the equivalent operators on the space $\mathcal{M}_{k-m}^{sing}(V_{m}\otimes\rho)$ of vector-valued modular forms. All the results extend to the case with the superscript $an$ instead of $sing$ as well. The operators $\delta_{k-m}$ and $y^{2}\partial_{\overline{\tau}}$ would again take elements of this space to $\mathcal{M}_{k-m+2}^{sing}(V_{m}\otimes\rho)$ and $\mathcal{M}_{k-m-2}^{sing}(V_{m}\otimes\rho)$ respectively. But also here there are other weight changing operators, defined on appropriate subspaces of $\mathcal{M}_{k-m}^{sing}(V_{m}\otimes\rho)$, that do not involve differentiation. Indeed, apart from the map $i_{m}$ from Corollary 2.2 of \cite{[Ze1]} (or Corollary \ref{depthim} here), one can define the complex conjugate map $\overline{i}_{m}:V_{m} \to V_{m+1}$ using multiplication by the complex conjugate vector $\binom{\overline{\tau}}{1}$. In addition, the same (simple) argument from Corollary 2.2 of \cite{[Ze1]} proves that $\frac{\overline{i}_{m}}{-2iy}$ takes elements of $\mathcal{M}_{k-m}^{sing}(V_{m}\otimes\rho)$ injectively into $\mathcal{M}_{k-m+1}^{sing}(V_{m+1}\otimes\rho)$, and the image consists of those elements \[\widetilde{F}\in\mathcal{M}_{k-m+1}^{sing}(V_{m-1}\otimes\rho),\quad\widetilde{F}(\tau)=\sum_{m_{+}+m_{-}=m+1}\widetilde{f}^{m_{+},m_{-}}(\tau)\binom{\tau}{1}^{m_{+}}\binom{\overline{\tau}}{1}^{m_{-}},\] in which the coefficient $\widetilde{f}^{m+1,0}$ vanishes. Combining these arguments with the injectivity of $i_{m-1}$ and $\frac{\overline{i}_{m-1}}{-2iy}$, we obtain the following weight-changing operators.
\begin{lem}
Let an element $F\in\mathcal{M}_{k-m}^{sing}(V_{m}\otimes\rho)$ be given.
\begin{enumerate}[$(i)$]
\item If $F \in i_{m-1}\big(\mathcal{M}_{k-m+1}^{sing}(V_{m-1}\otimes\rho)\big)$, then $\frac{\overline{i}_{m-1}}{-2iy}(i_{m-1}^{-1}F)\in\mathcal{M}_{k+2-m}^{sing}(V_{m}\otimes\rho)$.
\item When $F$ is in $\frac{\overline{i}_{m-1}}{-2iy}\big(\mathcal{M}_{k-m-1}^{sing}(V_{m-1}\otimes\rho)\big)$, we have that the modular form $i_{m-1}\circ\big(-2iy\overline{i}_{m-1}^{\ -1}F\big)$ lies in $\mathcal{M}_{k-2-m}^{sing}(V_{m}\otimes\rho)$.
\item Composing the operator from part $(ii)$ with the operator multiplying the coefficient $\widetilde{f}^{m_{+},m_{-}}$ of $\binom{\tau}{1}^{m_{+}}\binom{\overline{\tau}}{1}^{m_{-}}$ in $F$ by a constant $c_{m_{+},m_{-}}$ also yields a weight lowering operator. If $c_{m,0}=0$ then the resulting composition is defined to all the elements of $\mathcal{M}_{k-m}^{sing}(V_{m}\otimes\rho)$, not only $\frac{\overline{i}_{m-1}}{-2iy}$-images.
\end{enumerate} \label{wcShimnodif}
\end{lem}

\begin{rmk}
The last statement in part $(iii)$ of Lemma \ref{wcShimnodif} holds because multiplying $\widetilde{f}^{m,0}$ by 0 always yields an $\frac{\overline{i}_{m-1}}{-2iy}$-image. A composition as in part $(iii)$ of Lemma \ref{wcShimnodif} but with the operator from part $(i)$ of that lemma also defines a weight raising operator (which can be defined on the whole space $\mathcal{M}_{k-m}^{sing}(V_{m}\otimes\rho)$ if $c_{0,m}=0$), but this will not be of use in what follows. The operators that will later show up are the one from part $(i)$ and the one from part $(iii)$ with $c_{m_{+},m_{-}}=m_{-}$. We denote the composition with $-2iy\overline{i}_{m-1}^{\ -1}$ in this case by $D_{m}$ (so that the operator from part $(iii)$ is $i_{m-1} \circ D_{m}$), since it looks like ``differentiating the vectors $\frac{1}{(-2iy)^{m_{-}}}\binom{\tau}{1}^{m_{+}}\binom{\overline{\tau}}{1}^{m_{-}}$ with respect to $\frac{1}{-2iy}\binom{\overline{\tau}}{1}$'' (this interpretation will be useful in the point of view presented in Remark \ref{limops} below). \label{Dmdef}
\end{rmk}

The connections relating the operators $\delta_{k-m}$ and $y^{2}\partial_{\overline{\tau}}$ and the ones from Lemma \ref{wcShimnodif} and Remark \ref{Dmdef} to those appearing in Propositions \ref{difopQMF} and \ref{difopseq} are as follows.
\begin{thm}
Let $F\in\mathcal{M}_{k-m}^{sing}(V_{m}\otimes\rho)$ be associated with the quasi-modular form $f\in\widetilde{\mathcal{M}}_{k}^{sing,\leq m}(\rho)$, with functions $f_{r}$, $0 \leq r \leq m$, as in Theorem \ref{QMFVVMF}.
\begin{enumerate}[$(i)$]
\item The modular form $\delta_{k-m}F\in\mathcal{M}_{k+2-m}^{sing}(V_{m}\otimes\rho)$ is associated with $\delta_{k-m}f$.
\item When $F$ is an $i_{m-1}$-image (so that $f\in\widetilde{\mathcal{M}}_{k}^{sing,\leq m-1}(\rho)$), the modular form $\frac{\overline{i}_{m-1}}{-2iy}(i_{m-1}^{-1}F)\in\mathcal{M}_{k+2-m}^{sing}(V_{m}\otimes\rho)$ corresponds to $\frac{f}{-2iy}\in\widetilde{\mathcal{M}}_{k+2}^{sing,\leq m}(\rho)$.
\item Under the restriction from part $(ii)$, the modular form assigned to $\delta_{l}f$ (in particular to $\partial_{\tau}f=\delta_{0}f$) is $\delta_{k-m}F+(k-m-l)\frac{\overline{i}_{m-1}}{-2iy}(i_{m-1}^{-1}F)$.
\item With $y^{2}\partial_{\overline{\tau}}f\in\widetilde{\mathcal{M}}_{k-2}^{sing,\leq m}(\rho)$ we associate $y^{2}\partial_{\overline{\tau}}F\in\mathcal{M}_{k-2-m}^{sing}(V_{m}\otimes\rho)$.
\item Considering $f_{1}\in\widetilde{\mathcal{M}}_{k-2}^{sing,\leq m-1}(\rho)$ as embedded in $\widetilde{\mathcal{M}}_{k-2}^{sing,\leq m}(\rho)$, the corresponding modular form in $\mathcal{M}_{k-2-m}^{sing}(V_{m}\otimes\rho)$ is $i_{m-1}(D_{m}F)$, where $D_{m}$ is defined in Remark \ref{Dmdef}.
\end{enumerate} \label{difopShim}
\end{thm}

\begin{proof}
We denote the element of $\bigoplus_{s=0}^{m}\mathcal{M}_{k-2s}^{sing}(\rho)$ associated with $f$ and $F$ by $(F_{s})_{s=0}^{m}$, and recall the two presentations of $F$ from Theorem \ref{QMFVVMF}. We begin by evaluating $\delta_{k-m}F$, using the presentation of $F$ in terms of the functions $f_{r}$ (with $f_{-1}=0$). As the part $\partial_{\tau}$ of $\delta_{k-m}$ has to operate also on the vectors $\binom{\tau}{1}^{m-r}\binom{1}{0}^{r}$, yielding a contribution involving the next vector $(m-r)\binom{\tau}{1}^{m-r-1}\binom{1}{0}^{r+1}$, the function $\delta_{k-m}F$ is easily seen (by a simple summation index change) to be associated with the quasi-modular form for which the $r$th function appearing in Equation \eqref{QMFdef} is $\delta_{k-m}f_{r}+(m+1-r)f_{r-1}$. Part $(i)$ here hence follows from part $(iii)$ of Proposition \ref{difopQMF}. For part $(ii)$ we use the expression for $F$ involving the functions $F_{s}$ in Theorem \ref{QMFVVMF}. It is clear from the definition, using another summation index change, that if $F$ is an $i_{m-1}$-image (i.e., if $F_{m}=0$), then $\frac{\overline{i}_{m-1}}{-2iy}\big(i_{m-1}^{-1}F\big)$ takes $\tau$ to
$\sum_{s=1}^{m}\frac{F_{s-1}(\tau)}{(-2iy)^{s}}\binom{\tau}{1}^{m-s}\binom{\overline{\tau}}{1}^{s}$ (with no coefficient associated with $s=0$ since $F_{-1}=0$). Our part $(ii)$ is therefore a consequence of part $(ii)$ of Proposition \ref{difopseq}, and part $(iii)$ is a linear combination of parts $(i)$ and $(ii)$. Considering the presentation of $F$ using the functions $f_{r}$ once again, and observing that the vectors $\binom{\tau}{1}^{m-r}\binom{1}{0}^{r}$ are holomorphic, we deduce that $y^{2}\partial_{\overline{\tau}}$ operates on $F$ only through its action on the components $f_{r}$, $0 \leq r \leq m$. The desired part $(iv)$ therefore follows from part $(iv)$ of Proposition \ref{difopQMF}. Finally, we apply the operator $D_{m}$ from Remark \ref{Dmdef} on the presentation of $F$ involving the $F_{s}$'s in Theorem \ref{QMFVVMF}, and it is clear (after yet another summation index change) that $i_{m-1}(D_{m}F)$ sends $\tau$ to $\sum_{s=0}^{m-1}\frac{(s+1)F_{s+1}(\tau)}{(-2iy)^{s}}\binom{\tau}{1}^{m-s}\binom{\overline{\tau}}{1}^{s}$. The required part $(v)$ is then established by applying part $(v)$ of Proposition \ref{difopseq}.
\end{proof}

\begin{rmk}
The parts of Theorem \ref{difopShim} can be interpreted as completing commutative diagrams. For example, parts $(i)$ is described as by the commutativity of the diagram
\[\begin{array}{ccc}\widetilde{\mathcal{M}}_{k}^{sing,\leq m}(\rho) & \stackrel{\sim}{\longrightarrow} & \mathcal{M}_{k-m}^{sing}(V_{m}\otimes\rho) \\ \\ \delta_{k-m}\Big\downarrow & & \Big\downarrow\delta_{k-m} \\ \\ \widetilde{\mathcal{M}}_{k+2}^{sing,\leq m}(\rho) & \stackrel{\sim}{\longrightarrow} & \mathcal{M}_{k+2-m}^{sing}(V_{m}\otimes\rho).\end{array}\] For part $(ii)$ (resp. $(iii)$) we replace the symbol $\delta_{k-m}$ on the arrow on the left by $f\mapsto\frac{f}{-2iy}$ (resp. $\delta_{l}$), while the one on the right should be $\frac{\overline{i}_{m-1}}{-2iy} \circ i_{m-1}^{-1}$ (resp. $\delta_{k-m}+(k-m-l)\big(\frac{\overline{i}_{m-1}}{-2iy} \circ i_{m-1}^{-1}\big)$). In the diagram corresponding to part $(iv)$ (resp. $(v)$) the weights $k+2$ and $k+2-m$ have to be $k-2$ and $k-2-m$ respectively, the symbol on the arrow on the left should be $y^{2}\partial_{\overline{\tau}}$ (resp. $f \mapsto f_{1}$), and on the arrow on the left we put $y^{2}\partial_{\overline{\tau}}$ (resp. $i_{m-1} \circ D_{m}$). \label{comdiaiso}
\end{rmk}

\begin{rmk}
Parts $(ii)$ and $(v)$ of Theorem \ref{difopShim} could have been proven using Proposition \ref{difopQMF} alone, by applying the equality $\binom{\overline{\tau}}{1}=\binom{\tau}{1}-2iy\binom{1}{0}$ (indeed, $i_{m-1} \circ D_{m}$ takes each vector $\binom{\tau}{1}^{m-r}\binom{1}{0}^{r}$ to $r\binom{\tau}{1}^{m+1-r}\binom{1}{0}^{r-1}$). Similarly, differentiating the expression for $F$ involving the functions $F_{s}$ in Theorem \ref{QMFVVMF} and using the same equality provides an alternative proof for parts $(i)$ and $(iv)$ as well, using just Proposition \ref{difopseq}. However, the proof we chose for each part is the simpler one. In addition, the fact that any modular or quasi-modular form is meromorphic on $\mathcal{H}$ if and only if it is annihilated by the operator $y^{2}\partial_{\overline{\tau}}$ (assuming that it has no essential singularities, a situation that we exclude also with the superscripts $sing$, $mer$ etc. in any case) suggests another proof of the holomorphicity/meromorphicity assertion in Theorem \ref{QMFVVMF}, as an application of part $(iv)$ of Theorem \ref{difopShim}. \label{altpf}
\end{rmk}

\begin{rmk}
Our operators also have a geometric origin, which we now explain. The representations $V_{m}$ can be seen as complex local systems on $\mathcal{H}$ and its quotients by discrete groups. If $\mathcal{V}_{m}$ is the vector bundle arising from $V_{m}$ (via the tensor product with the structure sheaf), then there is a natural connection $\nabla$, called the \emph{Gauss--Manin connection}, that takes sections of $\mathcal{V}_{m}$ to sections of $\mathcal{A}^{1}\otimes\mathcal{V}_{m}$, where $\mathcal{A}^{1}$ represents real-analytic differential forms on the quotient $\Gamma\backslash\mathcal{H}$. The splitting of differential forms in $\mathcal{A}^{1}$ into those involving $d\tau$ and those with $d\overline{\tau}$ decomposes $\nabla$ naturally as the sum of $\nabla^{h}$ (resulting in expressions involving $d\tau$) and $\nabla^{\overline{h}}$ (attaining differential forms with $d\overline{\tau}$). Now, $\mathcal{V}_{m}$ carries a natural Hodge filtration, in which $F^{p}\mathcal{V}_{m}$ is the sub-bundle whose sections are precisely those vector-valued modular forms that are associated to the subspace $\widetilde{\mathcal{M}}_{k}^{an,\leq m-p}(\rho)$ of $\widetilde{\mathcal{M}}_{k}^{an,\leq m}(\rho)$ in Theorem \ref{QMFVVMF}. Thus $\mathcal{V}_{m}$ can be viewed as a variation of Hodge structures on $\mathcal{H}$. Then each of $\nabla^{h}$ and $\nabla^{\overline{h}}$ decomposes again into two components, one of which takes sections with a given Hodge weight to sections having the same Hodge weight. The other one shifts the Hodge weight by 1, and is $C^{\infty}$-linear. These components are evaluated, for the vector bundle $\mathcal{V}_{m}$, in \cite{[Zu]}, which is a good reference for many details of this construction. Once the sections are determined, we may take the tensor product with the vector space $V_{\rho}$ (so that the operators are tensored with $Id_{V_{\rho}}$), and get the spaces from Definition \ref{MFQMFdef}. It is easily seen that $\nabla^{h}$ increases the weight by 2, while $\nabla^{\overline{h}}$ decrease it by 2, and that they come from the operators $\delta_{k-m}$ and $y^{2}\partial_{\overline{\tau}}$ on $\mathcal{M}_{k-m}^{an}(V_{m}\otimes\rho)$. Recall from Remark \ref{comdiaiso} that parts $(i)$ and $(iv)$ of Theorem \ref{difopShim} can also be proved using the action on elements of $\mathcal{M}_{k-m}^{an}(V_{m}\otimes\rho)$ presented as in Theorem \ref{QMFVVMF} with the functions $F_{s}$. The evaluations leading to these alternative proofs also imply that the $C^{\infty}$-linear, Hodge weight changing part of $\nabla^{\overline{h}}$ is just our operator $i_{m-1} \circ D_{m}$ (up to some normalizing scalars). The other part of $\nabla^{\overline{h}}$ operates just as $y^{2}\partial_{\overline{\tau}}$ on the components $F_{s}$, $0 \leq s \leq m$. For the decomposition of $\nabla^{h}$ we get an analogous picture, involving some multiple of the operator $\frac{\overline{i}_{m-1}}{-2iy} \circ i_{m-1}^{-1}$ (which depends on the actual definition of $\nabla^{h}$, whether it is $\delta_{k-m}$ or some combination like in part $(iii)$ of Theorem \ref{difopShim}), as well as an operator that is ``complex conjugate'' to $i_{m-1} \circ D_{m}$ up to powers of $2iy$. \label{geom}
\end{rmk}

\smallskip

We recall from \cite{[Ze1]} that the multiplicative structure of the ring of quasi-modular forms was adapted not to a single representation $V_{m}$, but rather to their direct limit using the maps $i_{m}$. Indeed, that reference shows that if $V_{\infty}$ in the direct limit of the spaces $V_{m}$ via the $i_{m}$'s (which is an infinite-dimensional space), then the direct limit of the spaces $\mathcal{M}_{k-m}^{*}(V_{m}\otimes\rho)$ (for any superscript $*$ from Definition \ref{MFQMFdef}) with respect to the $i_{m}$'s is the space $\mathcal{M}_{k-\infty}^{*}(V_{\infty}\otimes\rho)$. Elements of the latter space can be presented either as $\tau\mapsto\sum_{s}\frac{F_{s}(\tau)}{(-2iy)^{s}}\binom{\tau}{1}^{\infty-s}\binom{\overline{\tau}}{1}^{s}$ or as $\tau\mapsto\sum_{r}f_{r}(\tau)\binom{\tau}{1}^{\infty-r}\binom{1}{0}^{r}$, where the sums over both $r$ and $s$ are finite, and the functions $F_{s}$ and $f_{r}$ are related precisely as in Theorem \ref{QMFVVMF}.

We therefore investigate the relations between the operators from Theorem \ref{difopShim} and $i_{m}$. Using the fact that $i_{m}$ corresponds to the natural inclusion of $\widetilde{\mathcal{M}}_{k}^{sing,\leq m}(\rho)$ in $\widetilde{\mathcal{M}}_{k}^{sing,\leq m+1}(\rho)$, these relations are as follows.
\begin{cor}
\begin{enumerate}[$(i)$]
\item For $F\in\mathcal{M}_{k-m}^{sing}(V_{m}\otimes\rho)$ we get $y^{2}\partial_{\overline{\tau}}(i_{m}F)=i_{m}(y^{2}\partial_{\overline{\tau}}F)$ and $i_{m}[(i_{m-1} \circ D_{m})F]=(i_{m} \circ D_{m+1})(i_{m}F)$, while for $\delta_{k-m}$ the equality $i_{m}(\delta_{k-m}F)=\big[\delta_{k-m-1}-\big(\frac{\overline{i}_{m}}{-2iy} \circ i_{m}^{-1}\big)\big](i_{m}F)$ holds. In case $F$ is an $i_{m-1}$-image we get $i_{m}\big[\big(\frac{\overline{i}_{m-1}}{-2iy} \circ i_{m-1}^{-1}\big)F\big]=\big(\frac{\overline{i}_{m}}{-2iy} \circ i_{m}^{-1}\big)(i_{m}F)$. Thus, for every $l$ the modular form $i_{m}\big[\delta_{k-m}F+(k-m-l)\frac{\overline{i}_{m-1}}{-2iy}(i_{m-1}^{-1}F)\big]$ coincides with $\delta_{k-m-1}(i_{m}F)+(k-m-1-l)\big(\frac{\overline{i}_{m}}{-2iy} \circ i_{m}^{-1}\big)(i_{m}F)$.
\item There exist limit operators on $\mathcal{M}_{k-\infty}^{sing}(V_{\infty}\otimes\rho)$, which we denote by $y^{2}\partial_{\overline{\tau}}$, $\frac{\overline{i}}{-2iy}$, $D$, and $\tilde{\delta}_{l}$, that commute with the embedding of $\mathcal{M}_{k-m}^{sing}(V_{m}\otimes\rho)$ into $\mathcal{M}_{k-\infty}^{sing}(V_{\infty}\otimes\rho)$ and the operators $y^{2}\partial_{\overline{\tau}}$, $\frac{\overline{i}_{m-1}}{-2iy} \circ i_{m-1}^{-1}$, $i_{m-1} \circ D_{m}$, and $\delta_{k-m}+(k-m-l)\big(\frac{\overline{i}_{m-1}}{-2iy} \circ i_{m-1}^{-1}\big)$ respectively.
\end{enumerate} \label{comim}
\end{cor}

\begin{proof}
Apply parts $(iv)$, $(v)$, $(i)$, $(ii)$ and $(iii)$ of Theorem \ref{difopShim} respectively, and observe that the maps taking $f\in\widetilde{\mathcal{M}}_{k}^{sing,\leq m}(\rho)$ to $y^{2}\partial_{\overline{\tau}}f$, $f_{1}$, and $\delta_{k-m}f$, as well as to $\frac{f}{-2iy}$ and to $\delta_{l}f$ in case $f$ is in $\widetilde{\mathcal{M}}_{k}^{sing,\leq m-1}(\rho)$, do not distinguish between $f$ as an element of $\widetilde{\mathcal{M}}_{k}^{sing,\leq m}(\rho)$ or of $\widetilde{\mathcal{M}}_{k}^{sing,\leq m+1}(\rho)$. This establishes part $(i)$, and part $(ii)$ immediately follows by taking the limit (the latter is slightly more natural when part $(i)$ is viewed as in Remark \ref{comdiaim} below).
\end{proof}

\begin{rmk}
Part $(i)$ of Corollary \ref{comim} can be interpreted as extending the diagrams from Remark \ref{comdiaiso} to larger commutative diagrams involving $i_{m}$. The one with $y^{2}\partial_{\overline{\tau}}$ thus becomes
\[\begin{array}{ccccc}\widetilde{\mathcal{M}}_{k}^{sing,\leq m}(\rho) & \stackrel{\sim}{\longrightarrow} & \mathcal{M}_{k-m}^{sing}(V_{m}\otimes\rho) & \stackrel{i_{m}}{\longrightarrow} & \mathcal{M}_{k-1-m}^{sing}(V_{m+1}\otimes\rho) \\ \\ y^{2}\partial_{\overline{\tau}}\Big\downarrow & & \Big\downarrow y^{2}\partial_{\overline{\tau}} & & \Big\downarrow y^{2}\partial_{\overline{\tau}} \\ \\ \widetilde{\mathcal{M}}_{k-2}^{sing,\leq m}(\rho) & \stackrel{\sim}{\longrightarrow} & \mathcal{M}_{k-2-m}^{sing}(V_{m}\otimes\rho) & \stackrel{i_{m}}{\longrightarrow} & \mathcal{M}_{k-3-m}^{sing}(V_{m+1}\otimes\rho),\end{array}\] while the diagram with $f \mapsto f_{1}$ and $i_{m-1} \circ D_{m}$ is completed with $i_{m} \circ D_{m+1}$ on the arrow on the right. In the remaining three diagrams the space on the lower right corner is $\mathcal{M}_{k+1-m}^{sing}(V_{m+1}\otimes\rho)$, and when the two arrows going down on the left and on the middle are both $\delta_{k-m}$ (resp. $f\mapsto\frac{f}{-2iy}$ and $\frac{\overline{i}_{m-1}}{-2iy} \circ i_{m-1}^{-1}$, resp. $\delta_{l}$ and $\delta_{k-m}+(k-m-l)\big(\frac{\overline{i}_{m-1}}{-2iy} \circ i_{m-1}^{-1}\big)$), the one on the right completes a commutative diagram with $\delta_{k-m-1}-\big(\frac{\overline{i}_{m}}{-2iy} \circ i_{m}^{-1}\big)$ (resp. $\frac{\overline{i}_{m}}{-2iy} \circ i_{m}^{-1}$, resp. $\delta_{k-m-1}+(k-m-1-l)\big(\frac{\overline{i}_{m}}{-2iy} \circ i_{m}^{-1}\big)$). \label{comdiaim}
\end{rmk}

\begin{rmk}
We note that the assertions from part $(i)$ of Corollary \ref{comim} involving $\frac{\overline{i}_{m}}{-2iy}$ or $D_{m}$ also follow directly from the commutative structure of symmetric powers and the fact that $D_{m}$ is based on coefficients that depend only on $m_{-}$ (indeed, all the $D_{m}$'s were seen in Remark \ref{Dmdef} to behave like taking derivatives with respect to $\frac{1}{-2iy}\binom{\overline{\tau}}{1}$, independently of the power of $\binom{\tau}{1}$). This is also the case for $y^{2}\partial_{\overline{\tau}}$, since the anti-holomorphic differentiation does not operate on the extra vector in the formula $i_{m}F=F\cdot\binom{\tau}{1}$. For the remaining operators we can use the Leibnitz rule for $\delta$ operators (which is a simple consequence of the usual Leibnitz rule for $\partial_{\tau}$), decomposing $\delta_{k-m-1}(i_{m}F)$ as the sum of $i_{m}(\delta_{k-m}F)$ and $F\cdot\delta_{-1}\binom{\tau}{1}$, and the latter multiplier of $F$ is just $-\binom{\overline{\tau}}{1}/2iy$. Part $(ii)$ of that corollary can be considered as the definition of the operators $\frac{\overline{i}}{-2iy}$, $D$, and $\tilde{\delta}_{l}$, as well as the assertion that the limit of the operators $y^{2}\partial_{\overline{\tau}}$ on the spaces $\mathcal{M}_{k-m}^{sing}(V_{m}\otimes\rho)$ is just the same operator on $\mathcal{M}_{k-\infty}^{sing}(V_{\infty}\otimes\rho)$. The first two operators are defined on the space $V_{\infty}$ itself, sending $\frac{1}{(-2iy)^{s}}\binom{\tau}{1}^{\infty-s}\binom{\overline{\tau}}{1}^{s}$ to $\frac{1}{(-2iy)^{s}}\binom{\tau}{1}^{\infty-s-1}\binom{\overline{\tau}}{1}^{s+1}$ and $\frac{s}{(-2iy)^{s-1}}\binom{\tau}{1}^{\infty-s+1}\binom{\overline{\tau}}{1}^{s-1}$ respectively, with the same respective description via multiplication by the vector $\frac{1}{-2iy}\binom{\overline{\tau}}{1}$ and differentiation with respect to it. They are simpler in the limit because we no longer have to worry about landing in the space with the same index $m$. Moreover, taking the direct limit of the commutative diagrams from Remarks \ref{comdiaiso} and \ref{comdiaim} (where the map from $\widetilde{\mathcal{M}}_{k}^{sing,\leq m}(\rho)$ into $\widetilde{\mathcal{M}}_{k}^{sing,\leq m}(\rho)$ are the evident inclusions---see Corollary \ref{depthim}) produces commutative diagrams like \[\begin{array}{ccc}\widetilde{\mathcal{M}}_{k}^{sing}(\rho) & \stackrel{\sim}{\longrightarrow} & \mathcal{M}_{k-\infty}^{sing}(V_{\infty}\otimes\rho) \\ \\ \delta_{l}\Big\downarrow & & \Big\downarrow\tilde{\delta}_{l} \\ \\ \widetilde{\mathcal{M}}_{k+2}^{sing}(\rho) & \stackrel{\sim}{\longrightarrow} & \mathcal{M}_{k+2-\infty}^{sing}(V_{\infty}\otimes\rho),\end{array}\] or the one having $f\mapsto\frac{f}{-2iy}$ on the arrow on the left and $\frac{\overline{i}}{-2iy}$ on the one on the right. Once again, when the arrow on the left is $y^{2}\partial_{\overline{\tau}}$ or $f \mapsto f_{1}$ (into $\widetilde{\mathcal{M}}_{k-2}^{sing}(\rho)$), the space on the lower right corner has to be replaced by $\mathcal{M}_{k+2-\infty}^{sing}(V_{\infty}\otimes\rho)$, and the arrow on the right will carry $y^{2}\partial_{\overline{\tau}}$ or $D$ respectively. \label{limops}
\end{rmk}

We recall from \cite{[Ze1]} that the modular forms with representations involving $V_{m}$ are endowed with a multiplicative structure, arising from the tensor product and the natural projection from $V_{m} \otimes V_{p}$ onto $V_{m+p}$. This multiplication corresponds to the usual (tensor) product of quasi-modular forms via Theorem \ref{QMFVVMF}. It also behaves well with respect to the embeddings $i_{m}$. On the other hand, the operators appearing in the Rankin--Cohen brackets in Theorem \ref{RCbrack} do not commute well with the inclusions $i_{m}$. Therefore the only assertion about Rankin--Cohen brackets for modular forms involving $V_{m}$ that we can make at this point is the following.
\begin{cor}
The combinations defining the Rankin--Cohen brackets in Theorem \ref{RCbrack} yield, when composed with the projection $V_{m} \otimes V_{p} \to V_{m+p}$, bilinear operators from $\mathcal{M}_{k-m}^{*}(V_{m}\otimes\rho)\times\mathcal{M}_{l-p}^{*}(V_{p}\otimes\eta)$ to $\mathcal{M}_{k+l+2n-m-p}^{*}(V_{m+p}\otimes\rho\otimes\eta)$ for every type $*$. The same assertion holds for the combinations from Remark \ref{RC2dim} for the cases excluded in that theorem. \label{RCShim}
\end{cor}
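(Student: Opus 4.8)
The plan is to reduce the assertion, via the dictionary of Theorem~\ref{QMFVVMF}, to the Rankin--Cohen brackets on quasi-modular forms that have already been constructed. First I would take $F\in\mathcal{M}_{k-m}^{*}(V_{m}\otimes\rho)$ and $G\in\mathcal{M}_{l-p}^{*}(V_{p}\otimes\eta)$, and let $f\in\widetilde{\mathcal{M}}_{k}^{*,\leq m}(\rho)$ and $g\in\widetilde{\mathcal{M}}_{l}^{*,\leq p}(\eta)$ be the quasi-modular forms associated with them in Theorem~\ref{QMFVVMF}, the correspondence being type-preserving for every $*$ (directly for $an$ and $sing$, and by the holomorphy/meromorphy clause together with the evident restrictions for the remaining types). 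Applying Theorem~\ref{RCbrack} with the depth bounds $d=m$ and $e=p$ shows that $[f,g]_{n;k,m;l,p}$ lies in $\widetilde{\mathcal{M}}_{k+l+2n}^{*,\leq m+p}(\rho\otimes\eta)$. Feeding this back through Theorem~\ref{QMFVVMF}, which identifies that space with $\mathcal{M}_{(k+l+2n)-(m+p)}^{*}(V_{m+p}\otimes\rho\otimes\eta)$, yields an element of $\mathcal{M}_{k+l+2n-m-p}^{*}(V_{m+p}\otimes\rho\otimes\eta)$, precisely the target asserted.

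It then remains to recognize this bilinear operator as the defining Rankin--Cohen combination composed with the projection $V_{m}\otimes V_{p}\to V_{m+p}$. Here I would invoke the multiplicative dictionary recalled just before the corollary: the tensor product of quasi-modular forms corresponds, under Theorem~\ref{QMFVVMF}, to tensoring the associated vector-valued forms and then projecting $V_{m}\otimes V_{p}\to V_{m+p}$. It is cleanest to use the weight-raising presentation of the bracket from the proof of Theorem~\ref{RCbrack}, namely $[f,g]_{n;k,m;l,p}=\sum_{r=0}^{n}\binom{n}{r}a_{r}\,\delta_{k-m}^{r}f\otimes\delta_{l-p}^{n-r}g$, since each $\delta_{k-m}^{r}f$ stays in depth $\leq m$ and each $\delta_{l-p}^{n-r}g$ in depth $\leq p$. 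By part~$(i)$ of Theorem~\ref{difopShim}, applied repeatedly (the index of $\delta$ being raised by $2$ at each step so as to match the running weight), these correspond to $\delta_{k-m}^{r}F\in\mathcal{M}_{k-m+2r}^{*}(V_{m}\otimes\rho)$ and $\delta_{l-p}^{n-r}G\in\mathcal{M}_{l-p+2(n-r)}^{*}(V_{p}\otimes\eta)$. Hence the image of $(F,G)$ is $\sum_{r=0}^{n}\binom{n}{r}a_{r}$ times the projection to $V_{m+p}$ of $\delta_{k-m}^{r}F\otimes\delta_{l-p}^{n-r}G$, which is visibly the defining combination composed with $V_{m}\otimes V_{p}\to V_{m+p}$, and which coincides with the $\partial_{\tau}$-combination of Theorem~\ref{RCbrack} because the two expressions for the bracket agree.

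The single point requiring care---and the reason the projection genuinely lands in $V_{m+p}$ rather than something larger---is the depth bookkeeping. Written in the naive form $\sum_{r}\binom{n}{r}a_{r}\partial_{\tau}^{r}f\otimes\partial_{\tau}^{n-r}g$, each term $\partial_{\tau}^{r}f\otimes\partial_{\tau}^{n-r}g$ has depth up to $m+p+n$ and so corresponds to a $V_{m+p+n}$-valued form, so a term-by-term projection would a priori overshoot $V_{m+p}$. What rescues the statement is exactly the depth bound of Theorem~\ref{RCbrack}: the special coefficients force the vanishing of all depth-components above $m+p$, equivalently the vanishing of the alternating sums $\sum_{i=0}^{t}\binom{t}{i}(-1)^{s+i}$ for $t>0$ established in that proof, so the combination descends to $V_{m+p}$. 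I expect this to be the only substantive step; once it is in hand, preservation of every type $*$ is inherited from Theorem~\ref{RCbrack} together with the type-preservation of Theorem~\ref{QMFVVMF}. Finally, as already remarked before the corollary, these operators do not commute with the inclusions $i_{m}$, so no compatibility with passage to $V_{m+1}$ is claimed, and none is needed for the fixed-$(m,p)$ assertion.
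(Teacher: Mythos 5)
Your proposal is correct and takes essentially the same route as the paper: the paper's entire proof is the remark that the corollary follows from part $(i)$ of Theorem \ref{difopShim} together with the proof of Theorem \ref{RCbrack}, and your argument is exactly that remark spelled out, passing the presentation $\sum_{r}\binom{n}{r}a_{r}\,\delta_{k-m}^{r}f\otimes\delta_{l-p}^{n-r}g$ of the bracket through Theorem \ref{QMFVVMF}, the multiplicative dictionary, and iterated applications of part $(i)$. Your closing observation that the $\delta$-combination agrees with the $\partial_{\tau}$-combination also when applied to the vector-valued forms themselves (because the cancellation established in the proof of Theorem \ref{RCbrack} is a formal operator identity, not one special to quasi-modular forms) is precisely the detail that makes the paper's terse proof work.
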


\begin{proof}
Part $(i)$ of Theorem \ref{difopShim} shows that the form of these compositions is the same on quasi-modular forms and on the spaces in question. The assertions now follow from the proof of Theorem \ref{RCbrack}, together with the considerations in Remark \ref{RC2dim} in the excluded cases.
\end{proof}
Note that unlike Corollary \ref{comim}, the Rankin--Cohen brackets from Corollary \ref{RCShim} do not go naturally over to $V_{m+p+1}$ (or to the limit space with $V_{\infty}$). This is not surprising, since the Rankin--Cohen brackets defined in Theorem \ref{RCbrack} depend on the depth of the quasi-modular forms on which they operate.

\subsection{$\mathfrak{sl}_{2}$-Triples}

Recall that the operators acting on some vector space form a Lie algebra with the commutator $[X,Y]=XY-YX$. The triples of operators in which we are interested are the following ones.
\begin{defn}
Three elements $H$, $E$, and $F$ of a Lie algebra are called an \emph{$\mathfrak{sl}_{2}$-triple} if they satisfy the commutation relations $[H,E]=2E$, $[H,F]=-2F$, and $[E,F]=H$. \label{sl2def}
\end{defn}
Operators satisfying the commutation relations from Definition \ref{sl2def} are (classically) called $\mathfrak{sl}_{2}$-triples since the three natural generators $H=\big(\begin{smallmatrix} 1 & 0 \\ 0 & -1\end{smallmatrix}\big)$,  $E=\big(\begin{smallmatrix} 0 & 1 \\ 0 & 0\end{smallmatrix}\big)$, and $F=\big(\begin{smallmatrix} 0 & 0 \\ 1 & 0\end{smallmatrix}\big)$ of the Lie algebra $\mathfrak{sl}_{2}(\mathbb{R})$ of real traceless $2\times2$ matrices satisfy them. Two reasons for the importance of $\mathfrak{sl}_{2}$-triples are that they generate the isomorphism type of the minimal non-Abelian simple Lie algebra, and that one can associate a natural Laplacian to each such triple.

An action of an $\mathfrak{sl}_{2}$-triple on a space decomposes the space according to the eigenvalue of $H$. The operators $E$ and $F$ send an element with a certain eigenvalue $\kappa$ to elements of eigenvalue $\kappa\pm2$. We are interested in $\mathfrak{sl}_{2}$-triples acting on (quasi-)modular forms in which $H$ is the operator $W$ multiplying every (quasi-)modular form by its weight. Then $E$ and $F$ must correspond to a weight raising and a weight lowering operator respectively. Indeed, it is shown in \cite{[V]} that when working with real-analytic functions on $\mathcal{H}$ or on $SL_{2}(\mathbb{R})$, the operator $W$ forms such an $\mathfrak{sl}_{2}$-triple with the operators $2i\delta_{k}$ and $-2iy^{2}\partial_{\overline{\tau}}$. As another example, Equation (8) of \cite{[A]} implies that $W$ and the (holomorphic) operators $\partial_{\tau}$ and $f \mapsto f_{1}$ become another such $\mathfrak{sl}_{2}$-triple, now operating on (holomorphic) quasi-modular forms, after inverting the sign of one of the two latter operators.

\smallskip

For finding $\mathfrak{sl}_{2}$-triples we shall first need to evaluate the commutation relations between our operators.
\begin{prop}
Take a modular form $F\in\mathcal{M}_{k-m}^{sing}(V_{m}\otimes\rho)$ for some $m$, associated by Theorem \ref{QMFVVMF} to the element $f\in\widetilde{\mathcal{M}}_{k}^{sing,\leq m}(\rho)$ with the functions $f_{r}$, $0 \leq r \leq m$, and let $F_{\infty}$ be the image of $F$ in $\mathcal{M}_{k-\infty}^{sing}(V_{\infty}\otimes\rho)$. Then the following equalities hold:
\begin{enumerate}[$(i)$]
\item We have the two equalities $y^{2}\partial_{\overline{\tau}}\big[(i_{m-1} \circ D_{m})F\big]=(i_{m-1} \circ D_{m})(y^{2}\partial_{\overline{\tau}}F)$ and $y^{2}\partial_{\overline{\tau}}(DF_{\infty})=D(y^{2}\partial_{\overline{\tau}}F_{\infty})$.
\item If $F \in i_{m-1}\big(\mathcal{M}_{k+1-m}^{sing}(V_{m-1}\otimes\rho)\big)$, then $\delta_{k+2-m}\big[\big(\frac{\overline{i}_{m-1}}{-2iy} \circ i_{m-1}^{-1}\big)F\big]$ coincides with $\big(\frac{\overline{i}_{m-1}}{-2iy} \circ i_{m-1}^{-1}\big)\big[\delta_{k-m}F-\big(\frac{\overline{i}_{m-1}}{-2iy} \circ i_{m-1}^{-1}\big)F\big]$. We also have the equality $\tilde{\delta}_{l+2}\big(\frac{\overline{i}}{-2iy}F_{\infty}\big)=\frac{\overline{i}}{-2iy}(\tilde{\delta}_{l}F_{\infty})$ for every $l$.
\item For $F$ as in part $(ii)$, the composition $(i_{m-1} \circ D_{m})\big[\big(\frac{\overline{i}_{m-1}}{-2iy} \circ i_{m-1}^{-1}\big)F\big]$ gives the same value as $\big(\frac{\overline{i}_{m-1}}{-2iy} \circ i_{m-1}^{-1}\big)\big((i_{m-1} \circ D_{m})F\big)+F$. Moreover, the equality $D\big(\frac{\overline{i}}{-2iy}F_{\infty}\big)=\frac{\overline{i}}{-2iy}(DF_{\infty})+F_{\infty}$ holds.
\item Back to a general $F$, the modular form $(i_{m-1} \circ D_{m})(\delta_{k-m}F)$ is the same as $\big(\delta_{k-2-m}-2\frac{\overline{i}_{m-1}}{-2iy} \circ i_{m-1}^{-1}\big)\big((i_{m-1} \circ D_{m})F\big)+mF$. When $F$ is an $i_{m-1}$-image we may add $(k-m-l)\frac{\overline{i}_{m-1}}{-2iy} \circ i_{m-1}^{-1}$ to both $\delta$-operators, and obtain an equality if the multiplier $m$ is replaced by $k-l$. In the limit $m\to\infty$ we obtain $D(\tilde{\delta}_{l}F_{\infty})=\tilde{\delta}_{l}(DF_{\infty})+(k-l)F_{\infty}$.
\item We also get $y^{2}\partial_{\overline{\tau}}\big[\big(\frac{\overline{i}_{m-1}}{-2iy} \circ i_{m-1}^{-1}\big)F\big]=\big(\frac{\overline{i}_{m-1}}{-2iy} \circ i_{m-1}^{-1}\big)(y^{2}\partial_{\overline{\tau}}F)+\frac{1}{4}F$ on $i_{m-1}$-images. In addition, we have $y^{2}\partial_{\overline{\tau}}\big(\frac{\overline{i}}{-2iy}F_{\infty}\big)=\frac{\overline{i}}{-2iy}(y^{2}\partial_{\overline{\tau}}F_{\infty})+\frac{1}{4}F_{\infty}$.
\item The equality $y^{2}\partial_{\overline{\tau}}(\delta_{k-m}F)=\delta_{k-2-m}(y^{2}\partial_{\overline{\tau}}F)-\frac{k-m}{4}F$ holds for every $F\in\mathcal{M}_{k-m}^{sing}(V_{m}\otimes\rho)$. Assuming that $F$ is again in the image of $i_{m-1}$, adding $(k-m-l)\frac{\overline{i}_{m}}{-2iy} \circ i_{m-1}^{-1}$ from each $\delta$ gives another valid equality if the constant multiplier of $F$ is replaced by $-\frac{l}{4}$. Finally, the equality $y^{2}\partial_{\overline{\tau}}(\tilde{\delta}_{l}F_{\infty})=\tilde{\delta}_{l-2}(y^{2}\partial_{\overline{\tau}}F_{\infty})-\frac{l}{4}F_{\infty}$ also holds.
\end{enumerate} \label{comrels}
\end{prop}

\begin{proof}
Parts $(iv)$ and $(v)$ of Theorem \ref{difopShim} imply that the two sides of the first equality in part $(i)$ here are associated with $y^{2}\partial_{\overline{\tau}}f_{1}$ and the function with index 1 arising from $y^{2}\partial_{\overline{\tau}}f$ in Equation \eqref{QMFdef}. As these quasi-modular forms coincide by part $(iv)$ of Proposition \ref{difopQMF}, this equality holds, and the second one follows by passing to the limit from Corollary \ref{comim}. This proves part $(i)$. Using parts $(i)$ and $(ii)$ of Theorem \ref{difopShim}, the first modular form from part $(ii)$ here corresponds via Theorem \ref{QMFVVMF} to the element $\delta_{k+2-m}\big(\frac{f}{-2iy}\big)$ of $\widetilde{\mathcal{M}}_{k+4}^{sing,\leq m}(\rho)$. As the latter expression equals $\frac{\delta_{k+1-m}f}{-2iy}$ by Equation \eqref{deltalyt}, the same parts of Theorem \ref{difopShim} establishes the first equality. Assuming further that $F$ is an $(i_{m-1} \circ i_{m-2})$-image, we may add $(k-m-l)\big(\frac{\overline{i}_{m}}{-2iy} \circ i_{m-1}^{-1}\big)^{2}F$ to both sides of the equation, and taking the limit $m\to\infty$ via Corollary \ref{comim} produces the second equality in part $(ii)$ here as well. The first modular form appearing in part $(iii)$ is attached, as parts $(ii)$ and $(v)$ of Theorem \ref{difopShim} show, to the function with index 1 associated with $\frac{f}{-2iy}$ in Equation \eqref{QMFdef}, which was seen in part $(ii)$ of Proposition \ref{difopQMF} to be $\frac{f_{1}}{-2iy}+f$. We go back using the same parts of Theorem \ref{difopShim} to obtain the first equality, which yields the second one in the limit from Corollary \ref{comim} once more. The desired part $(iii)$ is thus also established.

Using parts $(i)$ and $(v)$ of Theorem \ref{difopShim} we identify the quasi-modular form associated with the first modular form appearing in part $(iv)$ as the function with index 1 corresponding to $\delta_{k-m}f$, which is evaluated as $\delta_{k-m}f_{1}+mf$ in part $(iii)$ of Proposition \ref{difopQMF}. We can go back using part $(iii)$ of Theorem \ref{difopShim} (which is applicable since $i_{m-1}(D_{m}F)$ is an $i_{m-1}$-image), which establishes the first equality, and the assertion with adding a multiple of $\frac{\overline{i}_{m-1}}{-2iy} \circ i_{m-1}^{-1}$ is now a consequence of part $(iii)$ here. The usual limit process from Corollary \ref{comim} then implies the third equality, and part $(iv)$ follows. Next, on the left hand side of the first equality in part $(v)$ we obtain the modular form corresponding to $y^{2}\partial_{\overline{\tau}}\big(\frac{f}{-2iy}\big)$ by parts $(ii)$ and $(iv)$ of Theorem \ref{difopShim}. Evaluating this derivative as $\frac{y^{2}\partial_{\overline{\tau}}f}{-2iy}+\frac{f}{4}$ and using the inverse argument produces the first equality, as well as the second one via another application of Corollary \ref{comim}. This proves part $(v)$ as well. Finally, the first equality in part $(vi)$ can be obtained by a direct evaluation, since the difference between the functions $y^{2}\partial_{\overline{\tau}}(\delta_{k-m}F)$ and $\delta_{k-2-m}(y^{2}\partial_{\overline{\tau}}F)$, which equals $y^{2}\delta_{k-m}\partial_{\overline{\tau}}F$ by Equation \eqref{deltalyt}, is just $y^{2}F\cdot\partial_{\overline{\tau}}\frac{k-m}{2iy}$ (alternatively, one can obtain the same equality but with $F$ replaced by $f$ and apply parts $(i)$ and $(iv)$ of Theorem \ref{difopShim}). The second assertion, for $i_{m-1}$-images, thus follows from part $(v)$, and Corollary \ref{comim} implies the equality in the limit once again.
\end{proof}

\begin{rmk}
The results of Proposition \ref{comrels} are, in fact, commutation relations between operators on the spaces $\mathcal{M}_{k-m}^{sing}(V_{m}\otimes\rho)$ and $\mathcal{M}_{k-\infty}^{sing}(V_{\infty}\otimes\rho)$. Explicitly, the assertions involving $F_{\infty}$ in parts $(i)$, $(iii)$, and $(v)$ can be written as the commutators $[y^{2}\partial_{\overline{\tau}},D]=0$, $\big[D,\frac{\overline{i}}{-2iy}\big]=I$, and $\big[y^{2}\partial_{\overline{\tau}},\frac{\overline{i}}{-2iy}\big]=\frac{1}{4}I$ respectively (where $I$ stands for the identity operator). From parts $(ii)$, $(iv)$, and $(vi)$ we deduce the equalities $\tilde{\delta}_{l+2}\circ\frac{\overline{i}}{-2iy}=\frac{\overline{i}}{-2iy}\circ\tilde{\delta}_{l}$, $D\circ\tilde{\delta}_{l}-\tilde{\delta}_{l} \circ D=(k-l)I$, and $y^{2}\partial_{\overline{\tau}}\circ\tilde{\delta}_{l}-\tilde{\delta}_{l-2} \circ y^{2}\partial_{\overline{\tau}}=-\frac{l}{4}I$ respectively for every $l$. Similar equalities hold for operators on $\mathcal{M}_{k-m}^{sing}(V_{m}\otimes\rho)$, but now with the indices $m$ and the embeddings $i_{m}$ and their inverses wherever necessary. Some of these relations were also seen to be well-defined only on the subspace $i_{m-1}\big(\mathcal{M}_{k+1-m}^{sing}(V_{m-1}\otimes\rho)\big)$ of $\mathcal{M}_{k-m}^{sing}(V_{m}\otimes\rho)$. We shall not write the formulae for finite $m$, since their spirit is seen in the limit $m\to\infty$ already presented, and since we shall only use the commutators on the spaces $\mathcal{M}_{k-\infty}^{sing}(V_{\infty}\otimes\rho)$ in what follows. \label{commutators}
\end{rmk}
Note that one must be careful when writing the equalities involving the $\tilde{\delta}_{l}$'s in Remark \ref{commutators} as commutators, since the index $l$ varies throughout the relation. We also remark that parts $(i)$ and $(ii)$ of Proposition \ref{comrels} show that operators going in the same direction (i.e., both increase or both decrease the weight) commute, so that non-trivial commutators only occur between a weight raising operator and a weight lowering operator.

\smallskip

We are interested in $\mathfrak{sl}_{2}$-triples arising from $W$ and the various weight changing operators on the spaces $\mathcal{M}_{k-\infty}^{sing}(V_{\infty}\otimes\rho)$ (or their finite-dimensional counterparts). We remark that while the weight of elements in $\mathcal{M}_{k-m}^{sing}(V_{m}\otimes\rho)$ with finite $m$ is $k-m$, we let $W$ act on them (hence also on elements of the limit $\mathcal{M}_{k-\infty}^{sing}(V_{\infty}\otimes\rho)$) as multiplication by $k$ and not $k-m$. In this way the action of $W$ commutes with $i_{m}$ and the isomorphisms from Theorem \ref{QMFVVMF}, and multiplies quasi-modular forms by their true weight.

For finding such $\mathfrak{sl}_{2}$-triples we prove the following lemma.
\begin{lem}
Any simple operator $\delta$ on $\bigoplus_{k}\mathcal{M}_{k-\infty}^{sing}(V_{\infty}\otimes\rho)$ that raises the weight reduces to $d_{k}\tilde{\delta}_{0}-a_{k}\frac{\overline{i}}{-2iy}$ on a single constituent $\mathcal{M}_{k-\infty}^{sing}(V_{\infty}\otimes\rho)$, where $a_{k}$ and $d_{k}$ are complex constants. Similarly, the restriction of a simple operator $\overline{\delta}$ that lowers the weight by 2 to such a constituent is $4b_{k}y^{2}\partial_{\overline{\tau}}-c_{k}D$ with constants $b_{k}$ and $c_{k}$. Restricting the commutation relation $[\delta,\overline{\delta}]$ to $\mathcal{M}_{k-\infty}^{sing}(V_{\infty}\otimes\rho)$ yields the sum of the operators $4(b_{k}d_{k-2}-b_{k+2}d_{k})(\tilde{\delta}_{0} \circ y^{2}\partial_{\overline{\tau}})$, $(c_{k+2}d_{k}-c_{k}d_{k-2})(\tilde{\delta}_{0} \circ D)$, $4(a_{k}b_{k+2}-a_{k-2}b_{k}-2b_{k+2}d_{k})\big(\frac{\overline{i}}{-2iy} \circ y^{2}\partial_{\overline{\tau}}\big)$, $-(a_{k}c_{k+2}-a_{k-2}c_{k})\big(\frac{\overline{i}}{-2iy} \circ D\big)$, and $(a_{k}b_{k+2}-a_{k}c_{k+2}+kc_{k+2}d_{k})I$, where $I$ is the identity operator. \label{sl2trip}
\end{lem}
By a \emph{simple} operator we mean a linear combination of the operators appearing in part $(ii)$ of Corollary \ref{comim}. This excludes compositions such as $\tilde{\delta}_{0}D\tilde{\delta}_{0}$ for $\delta$ or $y^{2}\partial_{\overline{\tau}}D\frac{\overline{i}}{-2iy}$ for $\overline{\delta}$. We observe that in $D$, as well as in $\frac{\overline{i}}{-2iy}$, we had the choice of scalars in Lemma \ref{wcShimnodif} (we chose, for a finite space $V_{m}$, the multipliers $m_{-}$ for $D_{m}$ and just 1 for $\frac{\overline{i}}{-2iy}$, but many other choices exist). It is fairly easy to see that all the other choices can be obtained as linear combinations of operators that are typically not simple (for example, the weight raising operator sending $F(\tau)=\sum_{s=0}^{m}\frac{F_{s}(\tau)}{(-2iy)^{s}}\binom{\tau}{1}^{\infty-s}\binom{\overline{\tau}}{1}^{s}$ to $\sum_{s=0}^{m}\frac{sF_{s-1}(\tau)}{(-2iy)^{s}}\binom{\tau}{1}^{\infty-s}\binom{\overline{\tau}}{1}^{s}$ is $\frac{\overline{i}}{-2iy}D\frac{\overline{i}}{-2iy}$). We shall not consider such operators in what follows, since our choices of scalars for the simple operators was already explained in Theorem \ref{difopShim} (also motivated in Remark \ref{geom}). Also note that the operator $\delta$ from Lemma \ref{sl2trip} is just $d_{k}\tilde{\delta}_{a_{k}/d_{k}}$ if $d_{k}\neq0$, but it cannot be written in this way when $d_{k}=0$.

\begin{proof}
The first two assertions follow easily from the fact that $\tilde{\delta}_{0}$ and $\frac{\overline{i}}{-2iy}$ form a basis for the weight raising operators in part $(ii)$ of Corollary \ref{comim} (the more general operators $\tilde{\delta}_{l}$ are just linear combination of them), and the weight lowering operators there are generated freely by $y^{2}\partial_{\overline{\tau}}$ and $D$. Their commutator reduces, on a single space $\mathcal{M}_{k-\infty}^{sing}(V_{\infty}\otimes\rho)$, to the difference \[\big(d_{k-2}\tilde{\delta}_{0}-a_{k-2}\tfrac{\overline{i}}{-2iy}\big)\circ(4b_{k}y^{2}\partial_{\overline{\tau}}-c_{k}D)-
(4b_{k+2}y^{2}\partial_{\overline{\tau}}-c_{k+2}D)\circ\big(d_{k}\tilde{\delta}_{0}-a_{k}\tfrac{\overline{i}}{-2iy}\big),\] which decomposes as the sum of the desired terms $4(b_{k}d_{k-2}-b_{k+2}d_{k})(\tilde{\delta}_{0} \circ y^{2}\partial_{\overline{\tau}})$ and $(c_{k+2}d_{k}-c_{k}d_{k-2})(\tilde{\delta}_{0} \circ D)$, the expressions $a_{k}(4b_{k+2}y^{2}\partial_{\overline{\tau}}-c_{k+2}D)\circ\frac{\overline{i}}{-2iy}$ and $-a_{k-2}\frac{\overline{i}}{-2iy}\circ(4b_{k}y^{2}\partial_{\overline{\tau}}-c_{k}D)$, and the differences $4b_{k+2}d_{k}(\tilde{\delta}_{0} \circ y^{2}\partial_{\overline{\tau}}-y^{2}\partial_{\overline{\tau}}\circ\tilde{\delta}_{0})$ and $-c_{k+2}d_{k}(\tilde{\delta}_{0} \circ D-D\circ\tilde{\delta}_{0})$. To evaluate the latter two differences we apply parts $(vi)$ and $(iv)$ of Proposition \ref{comrels} (interpreted as in Remark \ref{commutators}), and together with the definition of the operators $\tilde{\delta}_{l}$ in part $(ii)$ of Corollary \ref{comim} they become $-8b_{k+2}d_{k}\frac{\overline{i}}{-2iy} \circ y^{2}\partial_{\overline{\tau}}$ and $+kc_{k+2}d_{k}I$ respectively. Next, we decompose the term with $a_{k}$ as the sum of $a_{k}\frac{\overline{i}}{-2iy}\circ(4b_{k+2}y^{2}\partial_{\overline{\tau}}-c_{k+2}D)$, $4a_{k}b_{k+2}\big[y^{2}\partial_{\overline{\tau}},\frac{\overline{i}}{-2iy}\big]$, and $-a_{k}c_{k+2}\big[D,\frac{\overline{i}}{-2iy}\big]$. Evaluating the two commutators using parts $(iii)$ and $(v)$ of Proposition \ref{comrels} (this is also easier in terms of Remark \ref{commutators}) then produces the desired result, after gathering all the similar terms.
\end{proof}

Our main result in this section essentially means that the references \cite{[V]} and \cite{[A]} already contain all the possible $\mathfrak{sl}_{2}$-triples appearing in our operators, as Remark \ref{normtrip} later explains.
\begin{thm}
Assume that $E$ and $F$, which we shall denote by $\delta$ and $\overline{\delta}$ respectively, are linear combinations of the operators appearing in part $(ii)$ of Corollary \ref{comim} forming an $\mathfrak{sl}_{2}$-triple on a space of the form $\bigoplus_{k\in\kappa+2\mathbb{Z}}\mathcal{M}_{k-\infty}^{sing}(V_{\infty}\otimes\rho)$ with $W$ playing the role of $H$. Then precisely one of the following situations can occur.
\begin{enumerate}[$(i)$]
\item $\delta\big|_{\mathcal{M}_{k-\infty}^{sing}(V_{\infty}\otimes\rho)}=\frac{1}{b_{k+2}}\tilde{\delta}_{k}$ and $\overline{\delta}\big|_{\mathcal{M}_{k-\infty}^{sing}(V_{\infty}\otimes\rho)}=4b_{k}y^{2}\partial_{\overline{\tau}}$ for each $k$ for some sequence $(b_{k})_{k\in\kappa+2\mathbb{Z}}$ of non-zero complex numbers.
\item There is a sequence $(c_{k})_{k\in\kappa+2\mathbb{Z}}$ in $\mathbb{C}^{\times}$ such that $\delta\big|_{\mathcal{M}_{k-\infty}^{sing}(V_{\infty}\otimes\rho)}=\frac{1}{c_{k+2}}\tilde{\delta}_{0}$ and $\overline{\delta}\big|_{\mathcal{M}_{k-\infty}^{sing}(V_{\infty}\otimes\rho)}=-c_{k}D$ for every weight $k\in\kappa+2\mathbb{Z}$.
\end{enumerate} \label{onlyVandA}
\end{thm}

\begin{proof}
For the two commutation relations with $W$ to be the required ones it is necessary and sufficient that $\delta$ would increase the weight by 2 and $\overline{\delta}$ would decrease it by 2. Since we assume that they are simple, they can be described as in Lemma \ref{sl2trip}. The forming of an $\mathfrak{sl}_{2}$-triple with $W$ is now equivalent to their commutator $[\delta,\overline{\delta}]$ being $W$. Explicitly, we require that the action of the commutator $[\delta,\overline{\delta}]$ multiplies elements of the space $\mathcal{M}_{k-\infty}^{sing}(V_{\infty}\otimes\rho)$ by $k$. But the action of that commutator on $\mathcal{M}_{k-\infty}^{sing}(V_{\infty}\otimes\rho)$ is evaluated in Lemma \ref{sl2trip}, so that we have to compare the expression appearing there to just $kI$.

Now, since we work with $k$ in a single coset of $2\mathbb{Z}$, the vanishing of the coefficients multiplying the compositions $\tilde{\delta}_{0} \circ y^{2}\partial_{\overline{\tau}}$, $\tilde{\delta}_{0} \circ D$, and $\frac{\overline{i}}{-2iy} \circ D$ imply that the products $b_{k+2}d_{k}$, $c_{k+2}d_{k}$, and $a_{k}c_{k+2}$ are independent of $k$. If we denote these constants by $\xi$, $\nu$, and $\omega$ respectively, then from the vanishing of the coefficient of $\frac{\overline{i}}{-2iy} \circ y^{2}\partial_{\overline{\tau}}$ we deduce that $a_{k}b_{k+2}-k\xi$ is also a constant, which we denote by $\eta$ (indeed, by writing the latter difference as $\eta_{k}$ the vanishing coefficient in question reduces to $\eta_{k+2}-\eta_{k}$). The coefficient of $I$, which we compare to $k$, therefore takes the form $\eta+k\xi-\omega+k\nu$. This implies the equalities $\eta=\omega$ and $\nu=1-\xi$. But the two decompositions of the product $a_{k}b_{k+2}c_{k+2}d_{k}$ yield $\nu(\eta+k\xi)=\xi\omega$, which after substituting $\omega$ for $\eta$ and $1-\xi$ for $\nu$ reduces to $\xi(1-\xi)k=(2\xi-1)\omega$ for all $k$. Therefore $\xi$ is either 0 or 1 (so that $\nu$ is 1 or 0 respectively), and as $2\xi-1\neq0$ we find that the common value of $\eta$ and $\omega$ is 0.

Substituting everything back, we remain with the equalities $a_{k}c_{k+2}=0$, $a_{k}b_{k+2}=k\xi$, $b_{k+2}d_{k}=\xi$, and $c_{k+2}d_{k}=1-\xi$ for every $k$, with $\xi\in\{0,1\}$. Moreover, comparing the last two expressions yields $d_{k}(c_{k+2}+b_{k+2})=1$, so that $d_{k}$ cannot vanish for any $k$ and the operator $\delta$ can be written as $d_{k}\tilde{\delta}_{a_{k}/d_{k}}$. Considering first the case with $\xi=1$, we deduce the vanishing of $c_{k+2}$ for every $k$ (in correspondence with the first equality), and from the remaining two equalities we find that $b_{k+2}\neq0$, $d_{k}=\frac{1}{b_{k+2}}$, and $a_{k}=\frac{k}{b_{k+2}}$ (so that $a_{k}/d_{k}$ is just $k$). This is the situation appearing in $(i)$. On the other hand, with $\xi=0$ we get $b_{k+2}=0$ for each $k$, and the non-vanishing of $c_{k+2}$ (with $d_{k}=\frac{1}{c_{k+2}}$) implies also $a_{k}=0$ (the latter two assertions agree with the required vanishing of $a_{k}b_{k+2}$). This is the situation from $(ii)$, and they clearly cannot happen simultaneously.
\end{proof}

\begin{rmk}
We remark that the requirement of the direct sum being on a single coset of $2\mathbb{Z}$ is important for having an irreducible $\mathfrak{sl}_{2}$-module. Otherwise we might have a direct sum of several such modules, and our $\mathfrak{sl}_{2}$-triple can involve $\tilde{\delta}_{k}$ and $4y^{2}\partial_{\overline{\tau}}$ on one irreducible component, and be based on $\tilde{\delta}_{0}$ and $D$ on another one. We also note that multiplying the action of $\delta$ by some constant on each $\mathcal{M}_{k-\infty}^{sing}(V_{\infty}\otimes\rho)$ while dividing $\overline{\delta}$ by the same constant on $\mathcal{M}_{k+2-\infty}^{sing}(V_{\infty}\otimes\rho)$ gives an equivalent $\mathfrak{sl}_{2}$-triple (this is just a question of normalization). Therefore the two families of $\mathfrak{sl}_{2}$-triple just amount to the different normalizations of the one appearing in \cite{[V]} and the one coming from \cite{[A]} respectively (in the first reference the chosen normalization is with $b_{k}=\frac{1}{2i}$ for every $k$, while in the second reference one just takes the constant sequence 1). \label{normtrip}
\end{rmk}

\section{Laplacians and Eigenfunctions \label{LapEigen}}

In this Section we present the two Laplacians on the space $\mathcal{M}_{k-\infty}^{sing}(V_{\infty}\otimes\rho)$ and determine their eigenspaces.

\subsection{Eigenfunctions of the Simple Laplacian and Lifts}

The center of the universal enveloping algebra of $\mathfrak{sl}_{2}$ is generated, as a polynomial ring, by the Casimir element $C=H^{2}+2EF+2FE$. Given a space on which $\mathfrak{sl}_{2}$ acts, the Casimir element acts as a central operator, and the Laplacian of the action is a suitable normalization of this operator (in the sense of addition of a polynomial in $H$ as well). We wish to normalize our Laplacians such that they will annihilate holomorphic (and meromorphic) modular forms of depth 0 (as the classical ones from the theory of modular forms do). We therefore write $C$ as $4EF+H^{2}-2H$ using the commutation relation from Definition \ref{sl2def}, and recall that $F=\overline{\delta}$ (in the notation of Lemma \ref{sl2trip} and Theorem \ref{onlyVandA}) annihilates the required functions. Since $C$ preserves the weights (by commuting with $H$), and $H=W$ operates as multiplication by a scalar on the space of modular or quasi-modular forms of a fixed weight, we shall define the Laplacian operator to be the one corresponding to the action of $\frac{C-W^{2}+2W}{4}$, which amounts to $\delta\circ\overline{\delta}$. The resulting operators are as follows.
\begin{defn}
The Laplacian on $\mathcal{M}_{k-\infty}^{sing}(V_{\infty}\otimes\rho)$ that arises from the $\mathfrak{sl}_{2}$-triples from Theorem \ref{onlyVandA} involving multiples of $\tilde{\delta}_{k}$ and of $y^{2}\partial_{\overline{\tau}}$ will be denoted by $\Delta_{k-\infty}^{V}$. The $\mathfrak{sl}_{2}$-triples of the second type, with multiples of $\tilde{\delta}_{0}$ and of $D$, give rise to the Laplacian on $\mathcal{M}_{k-\infty}^{sing}(V_{\infty}\otimes\rho)$ that will be denoted by $\Delta_{k-\infty}^{A}$. We denote by $\Delta_{k}^{V}$ and $\Delta_{k}^{A}$ the corresponding Laplacians on the space $\widetilde{\mathcal{M}}_{k}^{sing}(\rho)$ obtained by transferring these operators via Theorems \ref{QMFVVMF} and \ref{difopShim} and the commutative diagrams from Remark \ref{limops}. They take a quasi-modular form $f\in\widetilde{\mathcal{M}}_{k}^{sing}(\rho)$ to $4\delta_{k-2}y^{2}\partial_{\overline{\tau}}f$ (or equivalently $4y^{2}\delta_{k}\partial_{\overline{\tau}}f$ by Equation \eqref{deltalyt}) and $-\partial_{\tau}f_{1}$ respectively. The classical Laplacian $4\delta_{k-2}y^{2}\partial_{\overline{\tau}}=4y^{2}\delta_{k}\partial_{\overline{\tau}}$ (equality by Equation \eqref{deltalyt} again) on $\mathcal{M}_{k}^{sing}(\rho)$ will be denoted simply by $\Delta_{k}$. \label{Lapdef}
\end{defn}
The reason for the notation in Definition \ref{Lapdef} is that these $\mathfrak{sl}_{2}$-triples arise from the references \cite{[V]} and \cite{[A]} respectively. These operators are also easily seen to be independent of the choice of the sequences $(b_{k})_{k}$ and $(c_{k})_{k}$ (because these coefficients cancel in the product), as already suggested in Remark \ref{normtrip}. We shall thus henceforth assume, in all explicit formulae, that these coefficients are always 1.
\begin{rmk}
As for the spaces $\mathcal{M}_{k-m}^{sing}(V_{m}\otimes\rho)$ with finite $m$, the fact that the operator $\overline{\delta}=-D$ appearing in $\Delta_{k}^{A}$ is the limit image of $i_{m-1} \circ D_{m}$ mapping to $i_{m-1}$-images, this Laplacian is defined on the full space $\mathcal{M}_{k-m}^{sing}(V_{m}\otimes\rho)$ for each such $m$. This property is not shared by $\Delta_{k}^{V}$, which in general is defined only on the subspace of $\mathcal{M}_{k-m}^{sing}(V_{m}\otimes\rho)$ consisting of $i_{m-1}$-images (see Remark \ref{LapVm} below for more details). On the other hand, the operator $\Delta_{k-\infty}^{V}$ is well-defined on the full space $\mathcal{M}_{k-m}^{sing}(V_{m}\otimes\rho)$ in case $m=0$, where it reduces to the usual modular Laplacian $\Delta_{k}$ (see also Remark \ref{presVmLap} below). Note that $\Delta_{k-\infty}^{A}$ is just 0 on that space (since $D$ is). \label{LaponVm}
\end{rmk}

\smallskip

We begin our analysis of eigenfunctions by evaluating the action of the operators $\Delta_{k-\infty}^{V}$ and $\Delta_{k-\infty}^{A}$ from Definition \ref{Lapdef} on elements of $\mathcal{M}_{k-\infty}^{sing}(V_{\infty}\otimes\rho)$. Recall that we consider only elements that are the limit images of elements $\mathcal{M}_{k-m}^{sing}(V_{m}\otimes\rho)$ for some $m$ (so that using the bases appearing in Remark \ref{limops} only finitely many non-zero coefficients appear), and we present the former modular forms as in Theorem \ref{QMFVVMF} with the functions $F_{s}$. In fact, some of the calculations might be shortened if we had used the presentation with the quasi-modular forms $f_{r}$'s there, but as the functions $F_{s}$ are modular we prefer to apply the better known theory of modular (rather than quasi-modular) eigenfunctions.
\begin{lem}
Let an element $F\in\mathcal{M}_{k-\infty}^{sing}(V_{\infty}\otimes\rho)$ be given, and assume that it sends $\tau$ to
$\sum_{s=0}^{d}\frac{F_{s}(\tau)}{(-2iy)^{s}}\binom{\tau}{1}^{\infty-s}\binom{\overline{\tau}}{1}^{s}$ for some depth $d$. Then the images of $F$ under the operators $\Delta_{k-\infty}^{V}$ and $\Delta_{k-\infty}^{A}$ are the functions sending $\tau$ to $\sum_{s=0}^{d+1}\frac{F_{s,\Delta}^{V}(\tau)}{(-2iy)^{s}}\binom{\tau}{1}^{\infty-s}\binom{\overline{\tau}}{1}^{s}$ and $\sum_{s=0}^{d}\frac{F_{s,\Delta}^{A}(\tau)}{(-2iy)^{s}}\binom{\tau}{1}^{\infty-s}\binom{\overline{\tau}}{1}^{s}$ respectively, where
\[F_{s,\Delta}^{V}=\Delta_{k-2s}F_{s}+(s+1)\delta_{k-2-2s}F_{s+1}+(1-s)\big[sF_{s}+4y^{2}\partial_{\overline{\tau}}F_{s-1}\big]\] and
\[F_{s,\Delta}^{A}=-(s+1)\delta_{k-2-2s}F_{s+1}-s(k-1-s)F_{s},\] and in the evaluation of $F_{0,\Delta}^{V}$, $F_{d,\Delta}^{V}$, $F_{d+1,\Delta}^{V}$, and $F_{d,\Delta}^{A}$ we substitute just 0 for $F_{-1}$, $F_{d+1}$, or $F_{d+2}$. \label{Lapeval}
\end{lem}

\begin{proof}
Take some $m>d$, and consider our element $F\in\mathcal{M}_{k-\infty}^{sing}(V_{\infty}\otimes\rho)$ as the image of the modular form $F^{(m)}:\tau\to\sum_{s=0}^{d}\frac{F_{s}(\tau)}{(-2iy)^{s}}\binom{\tau}{1}^{m-s}\binom{\overline{\tau}}{1}^{s}$ from $\mathcal{M}_{k-m}^{sing}(V_{m}\otimes\rho)$ under the limit map. As $F^{(m)}$ is an $i_{m-1}$-image (since $m>d$), we can apply all the operators from Theorem \ref{difopShim} to it. Corollary \ref{comim} shows that for $\overline{\delta}$ we can let either $by^{2}\partial_{\overline{\tau}}$ or $-(i_{m-1} \circ D_{m})$ operate on $F^{(m)}$, and applying the maps from Theorem \ref{QMFVVMF} together with the formulae for transferring operators in Theorem \ref{difopShim} and Proposition \ref{difopseq}, we find that the resulting elements of $\mathcal{M}_{k-2-m}^{sing}(V_{m}\otimes\rho)$ take $\tau$ to
$\sum_{s=0}^{d}\frac{4y^{2}\partial_{\overline{\tau}}F_{s}(\tau)+(s+1)F_{s+1}(\tau)}{(-2iy)^{s}}\binom{\tau}{1}^{m-s}\binom{\overline{\tau}}{1}^{s}$ and
$\sum_{s=0}^{d}\frac{-(s+1)F_{s+1}(\tau)}{(-2iy)^{s}}\binom{\tau}{1}^{m-s}\binom{\overline{\tau}}{1}^{s}$ respectively (this can also be proved via a direct evaluation). Writing this element of $\mathcal{M}_{k-2-m}^{sing}(V_{m}\otimes\rho)$ as the one sending $\tau$ to $\sum_{s=0}^{d}\frac{G_{s}(\tau)}{(-2iy)^{s}}\binom{\tau}{1}^{m-s}\binom{\overline{\tau}}{1}^{s}$ for the moment, we have to find its image under the operator corresponding to $\delta$, which is either $\tilde{\delta}_{k-2}$ or $\tilde{\delta}_{0}$. Another application of Corollary \ref{comim} shows that this operator is $\delta_{k-2-m}-m\big(\frac{\overline{i}_{m+1}}{-2iy} \circ i_{m}^{-1}\big)$ in the first case and $\delta_{k-2-m}+(k-2-m)\big(\frac{\overline{i}_{m+1}}{-2iy} \circ i_{m}^{-1}\big)$ in the second case. A similar argument using Theorems \ref{QMFVVMF} and \ref{difopShim} and Proposition \ref{difopseq} (or another direct evaluation) produces the function sending $\tau$ to $\sum_{s=0}^{d}\frac{\delta_{k-2-2s}G_{s}(\tau)+(1-s)G_{s-1}(\tau)}{(-2iy)^{s}}\binom{\tau}{1}^{m-s}\binom{\overline{\tau}}{1}^{s}$ and $\sum_{s=0}^{d}\frac{\delta_{k-2-2s}G_{s}(\tau)+(k-1-s)G_{s-1}(\tau)}{(-2iy)^{s}}\binom{\tau}{1}^{m-s}\binom{\overline{\tau}}{1}^{s}$  as its respective $\delta$-image. Plugging in the respective expressions defining the functions $G_{s}$, observing the formula for $\Delta_{k-2s}$, and taking back the limit map from $\mathcal{M}_{k-m}^{sing}(V_{m}\otimes\rho)$ to $\mathcal{M}_{k-\infty}^{sing}(V_{\infty}\otimes\rho)$, we obtain the desired expressions for $F_{s,\Delta}^{V}$ and $F_{s,\Delta}^{A}$.
\end{proof}

\begin{rmk}
The fact that $F_{s,\Delta}^{A}$ is defined in Lemma \ref{Lapeval} only for $0 \leq s \leq d$ corresponds to $\Delta_{k-\infty}^{A}$ being well-defined on the spaces $\mathcal{M}_{k-m}^{sing}(V_{m}\otimes\rho)$ for every $m$ (see Remark \ref{LaponVm}). Turning now to $\Delta_{k-\infty}^{V}F$ in case $d=0$, we find that the function $F_{1,\Delta}^{V}$ from Lemma \ref{Lapeval} vanishes as well in this case (because of the coefficient $1-s$). This shows why $\Delta_{k-\infty}^{V}$ is defined on the space $\mathcal{M}_{k}^{sing}(\rho)$ as well, and its coincidence with $\Delta_{k}$ is also evident from the form of $F_{0,\Delta}^{V}$ in this case. \label{presVmLap}
\end{rmk}

\smallskip

We adopt henceforth the usual convention from the spectral theory of modular forms, in which the eigenvalues are with respect to \emph{minus} the Laplacian operator. In addition, we shall see that the exact depth of eigenfunctions, as elements of $\mathcal{M}_{k-\infty}^{sing}(V_{\infty}\otimes\rho)$, will have a strong effect on the possible eigenvalues and on the form of the eigenspaces. We therefore make the following definition.
\begin{defn}
Denote the space of solutions $F\in\mathcal{M}_{k-\infty}^{sing}(V_{\infty}\otimes\rho)$ of the equation $\Delta_{k-\infty}^{V}F=-\lambda F$ by $\mathcal{M}_{k-\infty,\lambda}^{sing,V}(V_{\infty}\otimes\rho)$. The space of functions $F$ satisfying $\Delta_{k-\infty}^{A}F=-\lambda F$ will similarly be denoted by $\mathcal{M}_{k-\infty,\lambda}^{sing,A}(V_{\infty}\otimes\rho)$. We also define $\mathcal{M}_{k-\infty,\lambda}^{sing,V,d}(V_{\infty}\otimes\rho)$ (resp. $\mathcal{M}_{k-\infty,\lambda}^{sing,A,d}(V_{\infty}\otimes\rho)$) to be the set of elements of $\mathcal{M}_{k-\infty,\lambda}^{sing,V}(V_{\infty}\otimes\rho)$ (resp. $\mathcal{M}_{k-\infty,\lambda}^{sing,A}(V_{\infty}\otimes\rho)$) whose depth is precisely $d$. The classical space of solutions $f\in\mathcal{M}_{k}^{sing}(\rho)$ to the equation $\Delta_{k}f=-\lambda f$ will be denoted by $\mathcal{M}_{k,\lambda}^{sing}(\rho)$. \label{defeigen}
\end{defn}
Note that the sets $\mathcal{M}_{k-\infty,\lambda}^{sing,V,d}(V_{\infty}\otimes\rho)$ and $\mathcal{M}_{k-\infty,\lambda}^{sing,A,d}(V_{\infty}\otimes\rho)$ from Definition \ref{defeigen} are not vector subspaces, since the difference of modular forms of depth $d$ need not have exact depth $d$ anymore.

Lemma \ref{Lapeval} has the following immediate consequence.
\begin{cor}
An element $F\in\mathcal{M}_{k-\infty}^{sing}(V_{\infty}\otimes\rho)$, written as in Lemma \ref{Lapeval}, lies in $\mathcal{M}_{k-\infty,\lambda}^{sing,V}(V_{\infty}\otimes\rho)$ (resp. $\mathcal{M}_{k-\infty,\lambda}^{sing,A}(V_{\infty}\otimes\rho)$) if and only if the equality $F_{s,\Delta}^{V}=-\lambda F_{s}$ (resp. $F_{s,\Delta}^{A}=-\lambda F_{s}$) holds for every $s$. Writing this explicitly under the assumption that $F$ has depth at most $d$, this statement with superscript $V$ amounts to the equality
\[\Delta_{k-2s}F_{s}+(s+1)\delta_{k-2-2s}F_{s+1}+(1-s)\big[sF_{s}+4y^{2}\partial_{\overline{\tau}}F_{s-1}\big]=-\lambda F_{s}\] holding for every $1 \leq s \leq d-1$, together with the equalities
\[\Delta_{k-2d}F_{d}+(1-d)\big[dF_{d}+4y^{2}\partial_{\overline{\tau}}F_{d-1}\big]=-\lambda F_{d},\] $-4dy^{2}\partial_{\overline{\tau}}F_{d}=0$ (from $s=d+1$), and
$\Delta_{k}F_{0}+\delta_{k-2}F_{1}=-\lambda F_{0}$. With the superscript $A$ the required equalities are
\[(s+1)\delta_{k-2-2s}F_{s+1}+s(k-1-s)F_{s}=\lambda F_{s}\] for every $0 \leq s \leq d-1$ as well as $d(k-1-d)F_{d}=\lambda F_{d}$. \label{compeigen}
\end{cor}

\begin{proof}
This follows immediately when we compare the coefficients of the two sides of the equality $\Delta_{k-\infty}^{V}F=-\lambda F$ or $\Delta_{k-\infty}^{A}F=-\lambda F$ and use the description and the explicit formulae from Lemma \ref{Lapeval} (after a global sign inversion when the superscript is $A$). The equations with indices 0, $d$, and $d+1$ for $V$, or just $d$ for $A$, are the equalities resulting from the vanishing of $F_{-1}$, $F_{d+1}$, and $F_{d+2}$.
\end{proof}

\begin{rmk}
We have already seen that the formulae for generic $s$ in Corollary \ref{compeigen} are valid equally well for the end cases with $s\in\{0,d,d+1\}$. Hence when we use inductive constructions below we do not have to separate the end cases from the rest. Note that the natural extension of these equalities to $s=-1$ seem to involve the (typically non-vanishing) function $F_{0}$, but the total equation does reduce to $0=0$ because of the coefficient $s+1$ appearing in front of $F_{s+1}$. The validity for $s<-1$ or for $s>d+1$ is in any case immediate. Therefore we can treat, in any inductive construction, the equalities from Corollary \ref{compeigen} as valid for all $s$ without worrying about particular values. \label{relsalls}
\end{rmk}

\smallskip

As the formula for $F_{s,\Delta}^{A}$ in Lemma \ref{Lapeval} and Corollary \ref{compeigen} involves fewer derivatives and fewer terms, the analysis of the latter spaces should be much simpler. Indeed, the case of generic weight $k$ is established rather easily as follows.
\begin{thm}
Take a Fuchsian group $\Gamma$, a weight $k$, a representation (or a multiplier system of weight $k$) $\rho$ of $\Gamma$, and a depth $d$. Then $\mathcal{M}_{k-\infty,\lambda}^{sing,A,d}(V_{\infty}\otimes\rho)$ is non-zero only for $\lambda=d(k-1-d)$. If we further assume that $k$ is not an integer between $d+1$ and $2d$ (this is an empty condition if $d=0$), then for any element $\varphi\in\mathcal{M}_{k-2d}^{sing}(\rho)$ there exists a unique element of $\mathcal{M}_{k-\infty,d(k-1-d)}^{sing,A,d}(V_{\infty}\otimes\rho)$ with $F_{d}=\varphi$, and its component $F_{s}$ is $\binom{d}{s}\delta_{k-2d}^{d-s}\varphi\Big/\prod_{q=0}^{d-s-1}(k-2d+q)$ for every $0 \leq s \leq d$. \label{eigenAgen}
\end{thm}

\begin{proof}
The equality associated with $s=d$ and superscript $A$ in Corollary \ref{compeigen} can clearly hold for non-zero $F_{d}$ if (and only if) the two coefficients coincide. This determines the value of $\lambda$. By setting $F_{d}=\varphi$, we need to show that for $F$ to be in $\mathcal{M}_{k-\infty,d(k-1-d)}^{sing,A,d}(V_{\infty}\otimes\rho)$ it is necessary and sufficient that $F_{s}$ is as stated. This statement amounts to the assertion that all the functions $F_{s}$ take the asserted values if and only if all the equations from Corollary \ref{compeigen} are satisfied. Now, we have seen that the choice of the value of $\lambda$ is equivalent to the validity of the equation for $s=d$, and with this value the equation associated with any $0 \leq s<d$ becomes $(s+1)\delta_{k-2-2s}F_{s+1}=(d-s)(k-1-d-s)F_{s}$ (the coefficient of $F_{s}$ equals $\lambda-s(k-1-s)$ with our value of $\lambda$). Our assumption on $k$ implies that this coefficient of $F_{s}$ does not vanish for any $s$, so that this equation determines $F_{s}$ in terms of $F_{s+1}$. It follows that an element of $\mathcal{M}_{k-\infty,d(k-1-d)}^{sing,A,d}(V_{\infty}\otimes\rho)$ with $F_{d}=\varphi$ exists and is unique. In more detail, the coefficient in front of $\delta_{k-2d}^{d-s}\varphi$ (recall that this operator is the composition $\delta_{k-2-2s}\circ\ldots\circ\delta_{k-2d}$) equals 1 for $s=d$, and the coefficient associated with $s$ is obtained from that for $s+1$ via multiplication by $\frac{s+1}{(d-s)(k-1-d-s)}$. Therefore the assumption that $F$ is an eigenfunction with $F_{d}=\varphi$ determines the functions $F_{s}$ to be the asserted ones (the extra term $k-1-d-s$ in the denominator of $F_{s}$ in comparison to that of $F_{s+1}$ is $k-2d+q$ with the value $q=d-s-1$).
\end{proof}
Note that the form of $F_{s}$ in Theorem \ref{eigenAgen} extends to give the correct value 0 if $s<0$ or if $s>d$ (because of the binomial coefficient), in correspondence with Remark \ref{relsalls}.

The form of eigenvalues determined in Theorem \ref{eigenAgen} motivates the following definition, which will make our presentation of the eigenvalues of $\Delta_{k-\infty}^{V}$ simpler as well.
\begin{defn}
Let $\varphi\in\mathcal{M}_{k-2d}^{sing}(\rho)$ be given. We define a \emph{lift of depth $d$ and eigenvalue $\lambda$ of $\varphi$} to be a function $F\in\mathcal{M}_{k-\infty,\lambda}^{sing,V}(V_{\infty}\otimes\rho)$ or in $\mathcal{M}_{k-\infty,\lambda}^{sing,A}(V_{\infty}\otimes\rho)$ in whose expansion as in Lemma \ref{Lapeval} we have $F_{d}=\varphi$, and such that $F_{s}$ is a constant multiple of $\delta_{k-2-2s}F_{s+1}$ for every $0 \leq s<d$. In case the superscript is $A$ we denote the lift of $\varphi$ by $\mathcal{L}_{k,d}^{A}(\varphi)$. For the superscript $V$ and eigenvalue $\lambda$ we shall write the lift as $\mathcal{L}_{k,d}^{V,\lambda}(\varphi)$, or with the superscript $\lambda$ replaced by another index indicating the value of $\lambda$. \label{liftdef}
\end{defn}

The notation $\mathcal{L}_{k,d}^{A}$ in Definition \ref{liftdef} does not contain $\lambda$ since Theorem \ref{eigenAgen} proved that the eigenvalue is determined by $k$ and $d$ in this case. We remark that the multipliers differentiating $F_{s}$ from $\delta_{k-2-2s}F_{s+1}$ in that definition will depend on the indices $k$, $d$, and $\lambda$, and will be determined by them (we have already seen this for $\mathcal{L}_{k,d}^{A}$ in most cases in Theorem \ref{eigenAgen}, and the remaining cases will be completed in Proposition \ref{eigenAsp} below). This would also imply that any map $\mathcal{L}_{k,d}^{A}$ and $\mathcal{L}_{k,d}^{V,\lambda}$ will be a linear map from the subspace of $\mathcal{M}_{k-2d}^{sing}(\rho)$ on which it is defined into $\mathcal{M}_{k-\infty,d(k-1-d)}^{sing,A}(V_{\infty}\otimes\rho)$ or
$\mathcal{M}_{k-\infty,\lambda}^{sing,V}(V_{\infty}\otimes\rho)$. This subspace was the full space $\mathcal{M}_{k-2d}^{sing}(\rho)$ in Theorem \ref{eigenAgen}, but this is not always the case (Proposition \ref{eigenAsp} already gives the first example when this does not happen). These linear maps are clearly injective, since the modular form $\varphi$ can always be reproduced as the coefficient $F_{d}$ of its lift $\mathcal{L}_{k,d}^{A}(\varphi)$ and $\mathcal{L}_{k,d}^{V,\lambda}(\varphi)$.

\smallskip

For integral weight $d+1 \leq k\leq2d$ the proof of Theorem \ref{eigenAgen} does not work, because some of the coefficients do vanish. Interestingly, these are precisely the weight where the analysis of \cite{[Ze1]} becomes more complicated (for example, the dimension formulae for $\mathcal{M}_{k-d}^{hol}(V_{d}\otimes\rho)$ depend there on whether $\Gamma$ has cusps or not). The result in this case is based on nearly meromorphic modular forms. If we consider an element $h\in\mathcal{M}_{l}^{nm}(\eta)$ for some representation $\eta$ as a polynomial in $\frac{1}{2iy}$ over meromorphic functions, then we write $h^{(r)}$ for its $r$th derivative as such a polynomial. It lies in $\mathcal{M}_{l-2r}^{nm}(\eta)$, since one easily verifies that each derivative corresponds to an application of the weight lowering operator $-4y^{2}\partial_{\overline{\tau}}$. Recall that Bol's identity mentioned above reduces, for every $l\in\mathbb{N}$, the operator $\delta_{-l}^{l+1}$ to just $\partial_{\tau}^{l+1}$. We now prove the following lemma.
\begin{lem}
Fix a non-negative integer $l$ and a representation $\rho$ of a Fuchsian group $\Gamma$. Then an element $\varphi\in\mathcal{M}_{-l}^{sing}(\rho)$ is annihilated by $\delta_{-l}^{l+1}=\partial_{\tau}^{l+1}$ if and only if it is of the form $(2iy)^{l}\overline{h}$ for an element $h\in\mathcal{M}_{l}^{nm,\leq l}(\overline{\rho})$. Moreover, the modular form $\delta_{-l}^{p}\varphi\in\mathcal{M}_{2p-l}^{sing}(\rho)$ equals, in this case, $(2iy)^{l-p}\overline{h^{(r)}}$ for every $p\in\mathbb{N}$. \label{nearmeroBol}
\end{lem}

\begin{proof}
We first prove by induction on $l$ that if a function $\xi$, with singularities as described in Definition \ref{MFQMFdef}, is annihilated by $\partial_{\tau}^{l+1}$, then it is of the form $\sum_{j=0}^{l}\frac{(2iy)^{j}}{j!}\overline{\omega_{l-j}(\tau)}$ with $\omega_{j}$, $0 \leq j \leq l$ being meromorphic functions. This statement is trivially true for $l=-1$ (since $\partial_{\tau}^{0}=Id$ annihilates nothing, and the asserted sum is empty), so we assume that the statement holds for $l-1$ and that $\xi$ satisfies $\partial_{\tau}^{l}\xi=0$. Then $\partial_{\tau}\xi$ is annihilated by $\partial_{\tau}^{l-1}$, so that we can write it as $\sum_{r=0}^{l-1}\frac{(2iy)^{r}}{r!}\overline{\omega_{l-1-r}(\tau)}$. But this is also the $\partial_{\tau}$-image of $\sum_{j=1}^{l}\frac{(2iy)^{j}}{j!}\overline{\omega_{l-j}(\tau)}$ (with $j=r+1$). Hence the difference between $\xi$ and that function is annihilated by $\partial_{\tau}$, and is thus of the form $\tau\mapsto\overline{\omega_{l}(\tau)}$ for some meromorphic $\omega_{l}$ (because of the restriction on the singularities).

In particular our modular form $\varphi$ is of this form. The behavior of weights of modular forms with respect to complex conjugation and powers of $2iy$ now shows that we can write $\varphi$ as $(2iy)^{l}\overline{h}$ for
$h\in\mathcal{M}_{l}^{sing}(\overline{\rho})$, and the form of $\varphi$ we just proved combines with the power of $2iy$ to show that $h$ must be in $\mathcal{M}_{l}^{nm,\leq l}(\overline{\rho})$. This proves the first assertion. We claim that if $\psi=(2iy)^{k}\overline{g}\in\mathcal{M}_{-k}^{sing}(\rho)$ with $g\in\mathcal{M}_{k}^{nm}(\overline{\rho})$ (no depth bound) for some weight, then $\delta_{-k}\psi=(2iy)^{k-2}\overline{g^{(1)}}$. Indeed, the relation between powers of $2iy$ and the operators $\delta_{k}$ shows that $\delta_{-k}\psi=(2iy)^{k}\partial_{\tau}\overline{g}$, and if we write $g(\tau)$ as $\sum_{j}\frac{\omega_{j}(\tau)}{(2iy)^{j}}$ and conjugate, then the operation of $\partial_{\tau}$ on the $j$th resulting term simply produces $j\frac{\overline{\omega_{j}(\tau)}}{(-2iy)^{j+1}}$. As this is the $j$th term of $\overline{g^{(1)}}\big/(2iy)^{2}$, multiplying by $(2iy)^{k}$ proves the required equality. The second assertion now follows via a simple induction.
\end{proof}

Following the proof of Theorem \ref{eigenAgen} we now get the following result.
\begin{prop}
For $\Gamma$, $k$, $\rho$, and $d$ as in Theorem \ref{eigenAgen}, assume now that $k$ is an integer $d+1+e$ with $0 \leq e<d$ (so that $\lambda$ is just $de$). Then the component $F_{d}=\varphi\in\mathcal{M}_{k-2d}^{sing}(\rho)$ of an element of $\mathcal{M}_{k-\infty,de}^{sing,A,d}(V_{\infty}\otimes\rho)$ is $(2iy)^{d-e-1}$ times the complex conjugate of an element  $h\in\mathcal{M}_{d-e-1}^{nm,\leq d-e-1}(\overline{\rho})$. Moreover, $F_{s}$ is $(-1)^{d-s}\binom{d}{s}\frac{(s-e-1)!}{(d-e-1)!}(2iy)^{s-e-1}\overline{h^{(d-s)}}$ for every $e<s \leq d$. In addition, the next function $F_{e}$ can be an arbitrary element $\psi\in\mathcal{M}_{k-2e}^{sing}(\rho)$, and the remaining coefficients $F_{s}$ with $0 \leq s \leq e$ are precisely those of $\mathcal{L}_{k,e}^{A}(\psi)$. \label{eigenAsp}
\end{prop}

\begin{proof}
We have seen that the equality
$(s+1)\delta_{k-2-2s}F_{s+1}=(d-s)(k-1-d-s)F_{s}$ must hold for every $0 \leq s<d$ in the proof of Theorem \ref{eigenAgen}, and the multiplier in the right hand side is $(d-s)(e-s)$ with our value of $k$. This determines $F_{s}$ with $s>e$ as $(-1)^{d-s}\binom{d}{s}\frac{(s-e-1)!}{(d-e-1)!}\delta_{k-2d}^{d-s}\varphi$ (since the coefficient multiplying $F_{s}$ here still does not vanish), where the product $\prod_{q=0}^{d-s-1}(k-2d+q)=\prod_{q=0}^{d-s-1}(e+1+q-d)$ from the denominator from Theorem \ref{eigenAgen} is $(-1)^{d-s}\frac{(d-e-1)!}{(s-e-1)!}$. Taking now $s=e$, the coefficient in front of $F_{e}$ vanishes, implying the vanishing of $\delta_{k-2-2e}F_{e+1}$, namely of $\delta_{k-2d}^{d-e}\varphi=\delta_{e+1-d}^{d-e}\varphi$ (recall the value of $k$). But this operator $\delta_{e+1-d}^{d-e}$ is, by Bol's identity mentioned above, just the $(d-e)$th power $\partial_{\tau}^{d-e}$. Therefore Lemma \ref{nearmeroBol} (with $l=d-e-1$) shows that $\varphi$ is of the desired form, and so are the functions $F_{s}$ for $e<s \leq d$. Now, the next function $F_{e}$ does not appear in the equation with $s=e$ (as we just saw), and the value $de$ of $\lambda$ equals $e(k-1-e)$ with our value of $k$. This puts us back in the situation from the proof of Theorem \ref{eigenAgen}, so that the statements about the functions $F_{s}$ with $0 \leq s \leq e$ follow from that theorem.
\end{proof}

It is clear that the choice $F_{e}=0$ satisfies the condition from Definition \ref{liftdef} for the resulting element of $\mathcal{M}_{k-\infty,de}^{sing,A,d}(V_{\infty}\otimes\rho)$ to be a lift of $\varphi$. Recalling that the lift $\mathcal{L}_{k,d}^{A}$ was defined for generic $k$ using the formula from Theorem \ref{eigenAgen}, Proposition \ref{eigenAsp} allows us to use the same formula for defining $\mathcal{L}_{k,d}^{A}(\varphi)$ also when $k$ is one of the special weights, but only if the lifted modular form $\varphi$ comes from $(2iy)^{2d-k}\overline{\mathcal{M}_{2d-k}^{nm,\leq 2d-k}(\overline{\rho})}$ (note the relation between $k$ and the parameter $e$ from that proposition).

In total, the eigenspaces of $\Delta_{k-\infty}^{A}$ can be summarized as follows.
\begin{cor}
Fix a weight $k$, a representation (or multiplier system) $\rho$, and an eigenvalue $\lambda$, and consider solutions $d$ to the equality $\lambda=d(k-1-d)$ that are non-negative integers. If there are none, the space $\mathcal{M}_{k-\infty,\lambda}^{sing,A}(V_{\infty}\otimes\rho)$ is trivial, and $\lambda$ is not an eigenvalue of $\Delta_{k-\infty}^{A}$. If there exists a single such solution $d$ (with or without multiplicity), then $\mathcal{M}_{k-\infty,\lambda}^{sing,A}(V_{\infty}\otimes\rho)=\mathcal{L}_{k,d}^{A}\big(\mathcal{M}_{k-2d}^{sing}(\rho)\big)$. Finally, assuming that both solutions of this quadratic equation are natural and different, say $d_{-}<d_{+}$, then $\mathcal{M}_{k-\infty,\lambda}^{sing,A}(V_{\infty}\otimes\rho)$ is the direct sum of $\mathcal{L}_{k,d_{-}}^{A}\big(\mathcal{M}_{k-2d_{-}}^{sing}(\rho)\big)$ and $\mathcal{L}_{k,d_{+}}^{A}\Big((2iy)^{2d_{+}-k}\overline{\mathcal{M}_{2d_{+}-k}^{nm,\leq 2d_{+}-k}(\overline{\rho})}\Big)$, and both summands are based on different components $F_{s}$: Such a component may be non-zero only for $d_{-}<s \leq d_{+}$ in the first summand, and for $0 \leq s \leq d_{-}$ in the second one. \label{eigenAtot}
\end{cor}

\begin{proof}
Any (non-trivial) eigenfunction of $\Delta_{k-\infty}^{A}$, with eigenvalue $\lambda$, must have some depth $d$, and Theorem \ref{eigenAgen} showed that the depth $d$ is possible if and only if it solves the required quadratic equation. This establishes the first assertion. Now, assuming that there is a solution $d\in\mathbb{N}$, the second solution to that equation is clearly $k-1-d$. This is an integer between 0 and $d-1$ if and only if $k$ is one of the integers considered in Proposition \ref{eigenAsp}. In particular if only one solution is in $\mathbb{N}$, then we are in the situation described in Theorem \ref{eigenAgen}, which proves the second assertion. The case where there are two such solutions is now easily seen to be the one described in Proposition \ref{eigenAsp}, with $d_{+}$ denoted $d$ there, and $d_{-}$ being just $e$. The assertion of that proposition is clearly equivalent to the last assertion here.
\end{proof}

We remark that the last situation in Corollary \ref{eigenAtot} can occur only if $k$ is an integer $\geq2$, for $k-1$ must be the sum of two distinct natural numbers for this to (possibly) happen. Note that also for such weights $k$ the infinitely many (negative) eigenvalues of the form $\lambda=d(k-1-d)$ for $d \geq k$ still produce just the eigenspaces from Theorem \ref{eigenAgen}, and Proposition \ref{eigenAsp} only applies to finitely many $\lambda$'s. Finally, the case of a double integer solution $d$ (with $\lambda=d^{2}$) corresponds to $k=2d+1$, which is indeed covered by Theorem \ref{eigenAgen} and not Proposition \ref{eigenAsp}.

\smallskip

We conclude the analysis of the eigenfunctions of $\Delta_{k-\infty}^{A}$ with the following result about the differential properties of these eigenfunctions, as well as the corresponding result about quasi-modular forms.
\begin{prop}
For $k$ and $\Delta$ such that $\mathcal{M}_{k-\infty,\lambda}^{sing,A}(V_{\infty}\otimes\rho)$ is non-zero, its intersection with the space $\mathcal{M}_{k-\infty}^{*}(V_{\infty}\otimes\rho)$ for any of the differential types from Definition \ref{MFQMFdef} is non-zero, and consists precisely of the lifts $\mathcal{L}_{k,d}(\varphi)$, with the appropriate $d$, for $\varphi\in\mathcal{M}_{k-2d}^{*}(\rho)$. In the special case with $\lambda=d_{+}d_{-}$ and $k=d_{+}+d_{-}+1$ for natural numbers $d_{-}<d_{+}$ we get, unless $*$ is $sing$ or $an$, only the space $\mathcal{L}_{k,d_{-}}\big(\mathcal{M}_{k-2d_{-}}^{*}(\rho)\big)$. \label{eigenAtypes}
\end{prop}

\begin{proof}
In order to investigate holomorphicity (or meromorphicity) of elements of $\mathcal{M}_{k-\infty}^{sing}(V_{\infty}\otimes\rho)$ we have to present them in a holomorphic basis. We shall use the limit images in $V_{\infty}$ of the basis $\binom{\tau}{1}^{m-r}\binom{1}{0}^{r}$ with $0 \leq r \leq m$ from Theorem \ref{QMFVVMF}. The transformation formula between the bases in that theorem clearly commutes with the maps $i_{m}$ (just adding another power of $\binom{\tau}{1}$ throughout), so that they also apply in the limit $V_{\infty}$. We therefore evaluate the functions $f_{r}$ associated with a sequence $F_{s}$ as the ones appearing in Theorem \ref{eigenAgen} and in the $\mathcal{L}_{k,d}$ part of Proposition \ref{eigenAsp}. We therefore substitute the formulae from that Theorem (which were seen to be valid also for the lifts of the form $\mathcal{L}_{d_{+}+d_{-}+1,d_{+}}(\varphi)$ from Proposition \ref{eigenAsp} provided that $s>d_{-}$) into the expression for $f_{r}$ from Theorem \ref{QMFVVMF}. We get $f_{r}=\sum_{s=r}^{d}\binom{s}{r}\binom{d}{s}\frac{\delta_{k-2d}^{d-s}\varphi}{(-2iy)^{s-r}}\Big/\prod_{q=0}^{d-s-1}(k-2d+q)$, a formula that is valid when $d$ is the only natural solution to $\lambda=d(k-1-d)$, when $d$ is the smaller such solution $d_{-}$, or when $d=d_{+}$ but $r>d_{-}$ (to avoid the lower bound on $s$). We can replace $\delta_{k-2d}^{d-s}\varphi$ by $\sum_{p=0}^{d-s}\binom{d-s}{p}\big[\prod_{q=d-s-p}^{d-s-1}(k-2d+q)\big]\frac{\partial_{\tau}^{d-s-p}\varphi}{(2iy)^{p}}$ by another application of Equation (56) of \cite{[Za]}, and note that in the summand corresponding to $s$ and $p$, the product of the three binomial coefficients equals $\binom{d}{r}\binom{d-r}{s+p-r}\binom{s+p-r}{s-r}$. Moreover, the denominators with $2iy$ become a total of $(-1)^{s-r}(2iy)^{s+p-r}$, and the products over $q$ cancel to a denominator of $\prod_{q=0}^{d-s-p-1}(k-2d+q)$. We make a summation index change by setting $m=s+p$ to be an integer between $r$ and $d$ and let $s$ go from $r$ to $m$ (with the substitution $p=m-s$). The terms $\binom{d}{r}\binom{d-r}{m-r}$, $\frac{\partial_{\tau}^{d-m}\varphi}{(2iy)^{m-r}}$, and the denominator $\prod_{q=0}^{d-m-1}(k-2d+q)$ are independent of $s$, and the remaining terms combine to $\sum_{s=r}^{m}\binom{m-r}{s-r}(-1)^{s-r}$. But this is the well-known expansion of $(1-1)^{m-r}=\delta_{m,r}$, reducing the total coefficient $f_{r}$ to just $\binom{d}{r}\partial_{\tau}^{d-r}\varphi\Big/\prod_{q=0}^{d-r-1}(k-2d+q)$. As the derivatives of $\varphi$ have the same differential properties as $\varphi$, this proves that all the functions $f_{r}$ of $\mathcal{L}_{k,d}(\varphi)$, hence also this lift itself, also share these properties. This proves the first assertion, and the second one now follows from the fact that the functions on which $\mathcal{L}_{k,d}$ is defined in Proposition \ref{eigenAsp} when $k$ is one of the weights considered (so that $d=d_{+}>d_{-}$) are not meromorphic.
\end{proof}
Note that the case where $\varphi$ has to be annihilated by some power $d-e$ of $\partial_{\tau}$ is precisely the case where this number $d-e$ is (up to sign) one of the multipliers forming the denominator appearing under $\partial_{\tau}^{d-r}\varphi$ for every $0 \leq r \leq e$ in the proof of Proposition \ref{eigenAtypes}.

Using Proposition \ref{eigenAtypes} and its proof, we obtain the following results for quasi-modular forms that are eigenfunctions of $\Delta_{k}^{A}$.
\begin{cor}
The quasi-modular forms that are eigenfunctions of $\Delta_{k}^{A}$ are as follows.
\begin{enumerate}[$(i)$]
\item Quasi-modular forms $f\in\widetilde{\mathcal{M}}_{k}^{sing}(\rho)$ that are eigenfunctions of $\Delta_{k}^{A}$ with eigenvalue $\lambda$ exist only if there is a solution $d\in\mathbb{N}$ to $\lambda=d(k-1-d)$.
\item Assuming that there is only one solution $d\in\mathbb{N}$ to the equation from part $(i)$, the associated eigenfunctions are precisely those functions of the form $\partial_{\tau}^{d}\varphi\Big/\prod_{q=0}^{d-1}(k-2d+q)$ for $\varphi\in\mathcal{M}_{k-2d}^{*}(\rho)$, for which the function  $f_{r}$ with $0 \leq r \leq d$ is $\binom{d}{r}\partial_{\tau}^{d-r}\varphi\Big/\prod_{q=0}^{d-r-1}(k-2d+q)$ (with a non-vanishing denominator). Such an element lies in $\widetilde{\mathcal{M}}_{k}^{*,\leq d}(\rho)$ with the same superscript $*$ as $\varphi$, but not in $\widetilde{\mathcal{M}}_{k}^{*,\leq d-1}(\rho)$ if it is non-zero.
\item If there are two distinct such solutions $d_{-}<d_{+}$ to the equation from part $(i)$, then the eigenfunctions lying in $\widetilde{\mathcal{M}}_{k}^{*,\leq d_{+}-1}(\rho)$ are those of the form  $\frac{(d_{+}-d_{-})!}{d_{+}!}\partial_{\tau}^{d_{-}}\varphi\in\widetilde{\mathcal{M}}_{k}^{*,\leq d_{-}}(\rho)$ for $\varphi\in\mathcal{M}_{k-2d_{-}}^{*}(\rho)$,
    having the functions $f_{r}=\binom{d_{-}}{r}\frac{(d_{+}-d_{-})!}{(d_{+}-r)!}\partial_{\tau}^{d_{-}-r}\varphi=\binom{d_{+}}{r}\frac{\partial_{\tau}^{d_{-}-r}\varphi}{(d_{-}-r)!}\Big/\binom{d_{+}}{d_{-}}$. Such a quasi-modular form has the same differential properties of $\varphi$ as in part $(ii)$. Another class of elements are of the form
    $\sum_{s=d_{-}+1}^{d_{+}}\binom{d_{+}}{s}\frac{(-1)^{d}(s-d_{-}-1)!}{(d_{+}-d_{-}-1)!(2iy)^{d_{-}+1}}\overline{h^{(d_{+}-s)}}$ for some $h$ from $\mathcal{M}_{d-e-1}^{nm,\leq d-e-1}(\overline{\rho})$, with depth $d_{+}$, for which $f_{r}$ with $0 \leq r \leq d_{+}$ equals $\binom{d_{+}}{r}\frac{(-1)^{d_{+}-r}}{(d_{+}-d_{-}-1)!(2iy)^{d_{-}+1-r}}\sum_{s=d_{-}+1}^{d_{+}}\binom{d_{+}-r}{s-r}(s-d_{-}-1)!\overline{h^{(d_{+}-s)}}$. Any eigenfunction of $\Delta_{k}^{A}$ is the sum of two elements, one of each sort.
\end{enumerate} \label{DeltaAQMF}
\end{cor}

\begin{proof}
Recalling that the coefficients of an element $F\in\mathcal{M}_{k-\infty}^{sing}(V_{\infty}\otimes\rho)$ in the basis $\binom{\tau}{1}^{\infty-r}\binom{1}{0}^{r}$ are the functions $f_{r}$ from the quasi-modular form $f$ associated with $F$ via Theorem \ref{QMFVVMF}, we transfer the results of Proposition \ref{eigenAtypes} via the commutative diagrams from Remark \ref{limops} (as in Definition \ref{Lapdef} for $\Delta_{k}^{A}$ itself). This establishes parts $(i)$ and $(ii)$ as well as the assertion about eigenfunctions in $\widetilde{\mathcal{M}}_{k}^{*,\leq d_{+}-1}(\rho)$ in part $(iii)$. For the remaining assertion in part $(iii)$ we use the explicit expressions for the functions $F_{s}$ appearing in Proposition \ref{eigenAsp}, substitute them again into the formula from Theorem \ref{QMFVVMF}, and apply the same transfer.
\end{proof}

\begin{rmk}
Recall that the formula for $f_{r}$ with $d_{-}<r \leq d_{+}$ appearing in the proof of Proposition \ref{eigenAtypes} is
$\binom{d_{+}}{r}(-1)^{d_{+}-r}\frac{(r-d_{-}-1)!}{(d_{+}-d_{-}-1)!}$ times the image of the action of $\partial_{\tau}^{d-r}$ on $(2iy)^{d_{+}-d_{-}-1}\overline{h}$ for $h$ in $\mathcal{M}_{d_{+}-d_{-}-1}^{nm,\leq d_{+}-d_{-}-1}(\overline{\rho})$. On the space of polynomials of given degree bound $\mu$ we define the twisting operation sending $p(x)=\sum_{i=0}^{\mu}a_{i}x^{i}$ to the polynomial sending $x$ to $x^{\mu}p\big(\frac{1}{x}\big)=\sum_{i=0}^{\mu}a_{\mu-i}x^{i}$. Considering $h$ as a polynomial of degree at most $d_{+}-d_{-}-1$ in $\frac{1}{2iy}$ whose coefficients $\{\eta_{j}\}_{j=0}^{d_{+}-d_{-}-1}$ are meromorphic functions, the modular form that we differentiate can be seen to equal $(-1)^{d_{+}-d_{-}-1}\sum_{j=0}^{d_{+}-d_{-}-1}\overline{\eta_{d_{+}-d_{-}-1-j}}(-2iy)^{j}$ (up to the external sign, this is the polynomial arising from $h$ by twisting and conjugating the coefficients, evaluated at $-2iy$). By denoting this twisted and conjugated polynomial by $\widehat{\overline{h}}$, we see that applying $\partial_{\tau}$ to such polynomials evaluated at $-2iy$ amounts to negating the derivative. Denoting the $p$th derivative of $\widehat{\overline{h}}$ as a polynomial by $\widehat{\overline{h}}^{(p)}$ (it is no longer modular, just quasi-modular), the function $f_{r}$ with $d_{-}<r \leq d_{+}$ from the proof of Proposition \ref{eigenAtypes} and part $(iii)$ of Corollary \ref{DeltaAQMF} becomes just
$\binom{d}{r}(-1)^{d_{+}-d_{-}-1}\frac{(r-d_{-}-1)!}{(d_{+}-d_{-}-1)!}\widehat{\overline{h}}^{(d_{+}-r)}(-2iy)$. The proof of Proposition \ref{eigenAtypes} can be modified slightly to give us the values of the functions $f_{r}$ with $0 \leq r \leq d_{-}$ as well, if we just notice that the argument remains unchanged except for the fact that the inner sum over $s$ is now taken from $d_{-}+1$ (rather than $r$) to $m$. Using Lemma \ref{cancbinom} below we can evaluate such a function $f_{r}$ as $\frac{d_{+}!(-1)^{d_{+}-r}}{r!(d_{+}-d_{-}-1)!(d_{-}-r)!}\sum_{m=d_{-}+1}^{d_{+}}\frac{1}{(d_{+}-m)!(-2iy)^{m-r}(m-r)}\cdot\widehat{\overline{h}}^{(d_{+}-m)}(-2iy)$. The associated quasi-modular form is attained by setting $r=0$ in the latter expression. \label{twistpol}
\end{rmk}

Since the map $\tau\mapsto\widehat{\overline{h}}(-2iy)$ is a modular form from $\mathcal{M}_{d_{-}+1-d_{+}}^{sing}(\rho)$, the functions $\widehat{\overline{h}}^{(d_{+}-m)}(-2iy)\big/(-2iy)^{m}$ appearing in the expression for $f_{0}$ in Remark \ref{twistpol} are all in $\widetilde{\mathcal{M}}_{k}^{sing,\leq d_{+}}(\rho)$ (since $k=d_{+}+d_{-}+1$), and of depth precisely $d_{+}$. It follows that the coefficients $\frac{d_{+}!(-1)^{d_{+}}}{(d_{+}-d_{-}-1)!d_{-}!(d_{+}-m)!m}$ with which they appear in $f_{0}$ in that remark are the ones in which each function $f_{r}$ with $d_{-}<r \leq d_{+}$ involves just the derivative $\widehat{\overline{h}}^{(d_{+}-r)}(-2iy)$, with some coefficient
$\binom{d_{+}}{r}(-1)^{d_{+}-d_{-}-1}\frac{(r-d_{-}-1)!}{(d_{+}-d_{-}-1)!}$ which reduces to a sign for $r=d_{+}$.

\subsection{Eigenfunction of the Other Laplacian}

The analysis of the eigenfunctions with respect to $\Delta_{k-\infty}^{V}$ is different for functions of depth 0 and for functions with higher depth. This is because the equality associated with $s=d+1$ in Corollary \ref{compeigen} (or Remark \ref{relsalls}) involves the multiplier $d$, whose vanishing for $d=0$ changes the analysis significantly. We begin with the case of depth 0, since it is simpler.
\begin{prop}
For any eigenvalue $\lambda$, the space $\mathcal{M}_{k,\lambda}^{sing}(\rho)$ embeds into $\mathcal{M}_{k-\infty,\lambda}^{sing,V}(V_{\infty}\otimes\rho)$ via the limit map of the $i_{m}$'s. \label{eigenVd0}
\end{prop}

\begin{proof}
The equation for $s=1$ in Corollary \ref{compeigen} reduces to just $0=0$, so that for elements of
$\mathcal{M}_{k-\infty}^{sing}(V_{\infty}\otimes\rho)$ to be in $\mathcal{M}_{k-\infty,\lambda}^{sing,V,0}(V_{\infty}\otimes\rho)$ the only equation that $F_{0}$ must satisfy is the one associated with $s=0$. But this equation reduces to $\Delta_{k}F_{0}=-\lambda F_{0}$, so that the element $F:\tau \mapsto F_{0}\binom{\tau}{1}^{\infty}$ lies in $\mathcal{M}_{k-\infty,\lambda}^{sing,V}(V_{\infty}\otimes\rho)$ if and only if $F_{0}\in\mathcal{M}_{k,\lambda}^{sing}(\rho)$.
\end{proof}
Since the construction from Proposition \ref{eigenVd0} satisfies the conditions of Definition \ref{liftdef} (trivially), we denote the limit map of the $i_{m}$'s on $\mathcal{M}_{k,\lambda}^{sing}(\rho)$ by $\mathcal{L}_{k,0}^{V,\lambda}$. This is one analogue of the maps from Theorem \ref{eigenAgen} and Proposition \ref{eigenAsp}.

\smallskip

We now turn our attention to the case of higher depth. Assuming that $F_{d+1}=0$ but $F_{d}\neq0$ in Corollary \ref{compeigen}, the equality associated with $s=d+1$ in that corollary implies the meromorphicity of $F_{d}$ (since $d\neq0$). We are interested in defining lifts also in this case, so that we shall need to know what to substitute for $\Delta_{k-2s}F_{s}$ and $4y^{2}\partial_{\overline{\tau}}F_{s-1}$ in the formula from Corollary \ref{compeigen}.

\begin{lem}
Let $h$ be an element of $\mathcal{M}_{l,\mu}^{sing}(\rho)$ for some weight $l$ and some eigenvalue $\mu$. Then $\delta_{l}h$ lies in $\mathcal{M}_{l+2,\mu+l}^{sing}(\rho)$, and applying $4y^{2}\partial_{\overline{\tau}}$ to $\delta_{l}h$ gives $-(\mu+l)h$. \label{LDeldelmer}
\end{lem}

\begin{proof}
When we let $\partial_{\overline{\tau}}$ act of $\delta_{l}h=\partial_{\tau}h+\frac{lh}{2iy}$ we get three terms. The ones in which $\partial_{\overline{\tau}}$ operates on $\partial_{\tau}h$ or on $h$ itself give the two terms appearing in $\delta_{l}\partial_{\overline{\tau}}h$. After multiplying by $4y^{2}$, these two terms combine to $\Delta_{l}h$, which equals $-\mu h$ by our assumption on $h$. But we also have the third term $lh\partial_{\overline{\tau}}\frac{1}{2iy}=lh\partial_{\overline{\tau}}\frac{1}{\tau-\overline{\tau}}$, which equals $\frac{lh}{(\tau-\overline{\tau})^{2}}=-\frac{lh}{4y^{2}}$. Multiplying that term by $4y^{2}$ as well, we obtain the second assertion. Finally, we recall that $\Delta_{l+2}\delta_{l}h$ can be decomposed as the action of $\delta_{l}$ on $4y^{2}\partial_{\overline{\tau}}\delta_{l}h$, and as the latter modular form was evaluated as $-(\mu+l)h$, the assertion about the eigenvalue of $\delta_{l}$ follows as well.
\end{proof}
In fact, we have already mentioned that \cite{[V]} considers the $\mathfrak{sl}_{2}$-triple consisting of $(\{E=\delta_{k}\}_{k},F=4y^{2}\partial_{\overline{\tau}},H=W)$, acting on the spaces of modular forms. Then Lemma \ref{LDeldelmer} amounts to the trivial commutation relations with the associated Casimir operator, taking into consideration the difference in normalization between the latter operator and the Laplacians $\{\Delta_{k}\}_{k}$.

\begin{cor}
Let $\varphi$ be an arbitrary non-zero element of $\mathcal{M}_{k-2d}^{mer}(\rho)$, which we assume to be non-constant if $k=2d$, and take $\lambda\in\mathbb{C}$. Consider an element $F$ of depth $d$ in $\mathcal{M}_{k-\infty}^{sing}(V_{\infty}\otimes\rho)$, expanded as in Lemma \ref{Lapeval}, and assume that $F_{s}$ is of the form $a_{s}\delta_{k-2d}^{d-s}\varphi$ for some constant $a_{s}$ for every $0 \leq s \leq d$, with $a_{d}=1$. Then $F\in\mathcal{M}_{k-\infty,\lambda}^{sing,V}(V_{\infty}\otimes\rho)$ if and only if the equality
\[\lambda a_{s}=(d-s)(k-1-d-s)a_{s}-(s+1)a_{s+1}-(s-1)\big[(d-s+1)(k-d-s)a_{s-1}-sa_{s}\big]\] holds for every $0 \leq s \leq d$. \label{coeffLkdV}
\end{cor}

\begin{proof}
We claim that $F_{s}$ is an eigenfunction for every $0 \leq s \leq d$, with eigenvalue $(d-s)(k-1-d-s)$. We prove this by decreasing induction on $s$, starting from the fact that $F_{d}=\varphi$ is meromorphic, hence harmonic and with the eigenvalue 0. Assuming that $F_{s}$ has the desired eigenvalue, Lemma \ref{LDeldelmer} shows that $F_{s-1}$ is also an eigenfunction, whose eigenvalue is obtained by adding $k-2s$ to that of $F_{s}$. As this combines to desired value $(d-s+1)(k-d-s)$, the claim follows. Moreover, $4y^{2}\partial_{\overline{\tau}}F_{s-1}$ would give the latter coefficient times $-a_{s-1}\delta_{k-2d}^{d-s}\varphi$ by the second assertion of Lemma \ref{LDeldelmer}. Therefore all the terms in the equality associated with $s$ in Corollary \ref{compeigen} are multiples of $\delta_{k-2d}^{d-s}\varphi$, where $\Delta_{k-2s}$ multiplies $F_{s}$ by minus its eigenvalue as well. Now, this function $\delta_{k-2d}^{d-s}\varphi$ does not vanish by our assumption on $\varphi$ (we have found the kernel of $\delta_{k-2d}^{d-s}$ in Lemma \ref{nearmeroBol}, and it intersects $\mathcal{M}_{k-2d}^{mer}(\rho)$ trivially, unless $k=2d$ where the intersection consists of the constant functions), so that we can compare the coefficients themselves on both sides and invert the sign. This yields the desired equality.
\end{proof}

Once again, when we fix a depth $d>0$ and avoid the integral weights between $d+1$ and $2d$, the existence of lifts is possible only for finitely many eigenvalues (but now typically more than one, unlike in Theorem \ref{eigenAgen}).
\begin{thm}
For a depth $d>0$ and a weight $k$ that is not an integer between $d+1$ and $2d$, a lift from $\mathcal{M}_{k-2d}^{mer}(\rho)$ into $\mathcal{M}_{k-\infty,\lambda}^{sing,V}(V_{\infty}\otimes\rho)$ exists, independently of $\Gamma$ and $\rho$, if and only if $\lambda$ is one of the numbers $j(k-1-j)$ with $j$ an integer between 0 and $d-1$. \label{eigenVgen}
\end{thm}

\begin{proof}
The equation associated with $s>1$ in Corollary \ref{coeffLkdV} can be rewritten as
\begin{equation}
a_{s-1}=\frac{[(d-s)(k-1-d-s)+s(s-1)-\lambda]a_{s}-(s+1)a_{s+1}}{(s-1)(d-s+1)(k-d-s)}, \label{recexp}
\end{equation}
since the numbers $1-s$ and $d+1-s$ do not vanish for $1<s \leq d$, and our assumption on $k$ implies that $k-d-s\neq0$ as well. Equation \eqref{recexp} for $s=d$ is valid as well, with $a_{d+1}$ considered to be 0, and we know that $a_{d}=1$. A simple induction now shows that $a_{s}$ is a polynomial of degree $d-s$ in $\lambda$ for every $1 \leq s \leq d$, with coefficients depending on $d$ and $k$, and in particular $a_{1}$ and $a_{2}$ are polynomials of degree $d-1$ and $d-2$ respectively. But setting $s=1$ in Corollary \ref{coeffLkdV} implies the vanishing of $[(d-1)(k-2-d)-\lambda]a_{1}-2a_{2}$, which is a polynomial of degree $d$ in $\lambda$. Therefore $\lambda$ must be a root of this polynomial in order for these coefficients to exist. The coefficient $a_{0}$ is determined by the equality with $s=0$ in Corollary \ref{coeffLkdV} as $\frac{a_{1}}{d(k-1-d)-\lambda}$, provided that $\lambda \neq d(k-1-d)$.

Now, the proof of Corollary \ref{eigenAtot} shows that $j(k-1-j) \neq d(k-1-d)$ for any $0 \leq j<d$. This means that if these values are roots of the polynomial in $\lambda$, then they produce lifts, since the expression for $a_{0}$ is also well-defined. Therefore the theorem will be proved once we provide, for fixed $d$ and $j$ (and $k$), coefficients $a_{d,s}^{(j)}$ for $0 \leq s \leq d$ with $a_{d,d}^{(j)}=1$ that satisfy all the equalities from Corollary \ref{coeffLkdV}. For such $d$, $s$, and $j$ we therefore set
\begin{equation}
a_{d,s}^{(j)}=\sum_{l=0}^{j}(-1)^{l}\binom{j}{l}\binom{d-l}{s}\frac{(d-1-l)!}{(d-1)!}\cdot\frac{\prod_{p=1}^{l}(k-j-p)}{\prod_{q=0}^{d-s-1}(k-2d+q)}. \label{adsj}
\end{equation}
Note that the factorial in the numerator is finite since $l \leq j<d$, and the denominator involving $k$ does not vanish as in Theorem \ref{eigenAgen}. Since only for indices $l \leq d-s$ the binomial coefficient does not vanish, and the term with $l=0$ gives 1 in the numerator, the coefficient $a_{d,d}^{(j)}$ indeed equals 1. Moreover, the natural extension to $s<0$ or to $s>d$ is 0, and we have to verify that the equalities from Corollary \ref{coeffLkdV} hold for every $s$ with our given value of $\lambda$.

Now, the right hand side of the equation associated to $s$ in that corollary involves $(d-s)(k-1-d-s)a_{d,s}^{(j)}-(s+1)a_{d,s+1}^{(j)}$, as well as the same expression but with $s$ replaced by $s-1$ and multiplied by $s-1$. When we evaluate the first difference, the denominator becomes the same product over $q$ in both terms (by cancelation). Moreover, in the summand associated with $l$ in that difference we get, instead of the binomial coefficient involving $s$, the difference $(d-s)\binom{d-l}{s}-(s+1)\binom{d-l}{s+1}$. But the second expression equals $(d-l-s)\binom{d-l}{s}$, so that the difference reduces to $l\binom{d-l}{s}$. From this we have to subtract $s-1$ times the same expression with $s$ replaced by $s-1$, and we compare the products over $q$ in both denominators. The required equality reduces to the statement that replacing the binomial coefficient involving $s$ in Equation \eqref{adsj} by the expression $l(k-d-s-1)\binom{d-l}{s}-l(s-1)\binom{d-l}{s-1}$ results in the same total value as the one obtained from multiplying that equation by $j(k-1-j)$.

In order to do so, we shall write the required combination of binomial coefficients as the sum of $l(k-1-l)\binom{d-l}{s}$, $-l(d-l)\binom{d-l}{s}$, $-ls\binom{d-l}{s}$, and $-l(s-1)\binom{d-l}{s-1}$. The latter two expressions equal $-l(d-l)\binom{d-l-1}{s-1}$ and $-l(d-l)\binom{d-l-1}{s-2}$ respectively, and their sum, which equals $-l(d-l)\binom{d-l}{s-1}$, merges with the second original expression to $-l(d-l)\binom{d-l+1}{s}$. Plugging in the remaining terms from Equation \eqref{adsj}, we would like to change the summation index (but not for the part involving the initial coefficient $l(k-1-l)\binom{d-l}{s}$). The multiplier $l$ allows us to restrict attention to $l\geq1$, the expression $l\binom{j}{l}$ equals $(j-l+1)\binom{j}{l-1}$, and $d-l$ combines with the factorial from Equation \eqref{adsj} to produce $(d-l)!$. The summation index change $l \mapsto l+1$ now gives us all the multipliers associated with $l$ in Equation \eqref{adsj} back, including an inversion of the sign $(-1)^{l}$, and we only have an extra multiplier of $k-j-l-1$ from the product over $p$. Noting that we have a multiplier of $j-l$ (after the index change), we can take the sum over $l$ to be from 0 to $j$ yet again. Recalling the initial terms with $l(k-l-1)\binom{d-l}{s}$, we find that the $l$th summand in our expression is the one from Equation \eqref{adsj} multiplied by $l(k-1-l)+(j-l)(k-1-j-l)$. But since this expression reduces to $j(k-1-j)$, we indeed get the desired equality to $j(k-1-j)a_{d,s}^{(j)}$.

This establishes the existence of the lifts to the eigenvalues $j(k-1-j)$. Now, if $k$ is not an integer between 2 and $2d$, then the proof of Corollary \ref{eigenAtot} implies that these are all distinct eigenvalues, yielding $d$ roots of our polynomial in $\lambda$. Therefore the resulting polynomial in $\lambda$ is some non-zero scalar multiple $\prod_{j=0}^{d-1}[\lambda-j(k-1-j)]$. Letting $k$ tend to an integer between 2 and $d$, the continuity of all the expressions involved and the non-vanishing of any denominator involved imply that this is the form of the polynomial also for these values of $k$ (now with some double roots). Hence the lifts exist, and are clearly well-defined by Definition \ref{liftdef}, only for the asserted values of $\lambda$ also in this case.
\end{proof}

\begin{rmk}
Note that once $\mathcal{L}_{k,d}^{V,\lambda}$ is defined for some $d>0$ and some $\lambda$, the lift $\mathcal{L}_{k,d+1}^{V,\lambda}$ is also defined for that eigenvalue $\lambda$. This will be useful for characterizing all the eigenfunctions (regardless of the depth) in Corollary \ref{eigenVtot} below. Moreover, the form of the eigenvalues from Theorem \ref{eigenVgen} allows us to simplify the notation $\mathcal{L}_{k,d}^{V,\lambda}$ for $\lambda=j(k-1-j)$ (and $k$ not one of the $d$ problematic weights) to $\mathcal{L}_{k,d}^{V,j}$. It is an injective map from $\mathcal{M}_{k-2d}^{mer}(\rho)$ into $\mathcal{M}_{k-\infty,j(k-1-j)}^{sing,V}(\rho)$. Note that when two different natural values of $j$, say $d>j_{+}>j_{-}$, describe the same eigenvalue, then $k=j_{+}+j_{-}+1$, and the two lifts $\mathcal{L}_{k,d}^{V,j_{\pm}}$ are defined into the same eigenspace. Now, in this case the multiplier with $p=j_{-}+1$ in each summand with $l>j_{-}$ in the definition of $a_{d,s}^{(j_{+})}$ vanishes (recall the value of $k$), so that the sum goes up to $j_{-}$ in any case. Moreover, substituting the value of $k$ in the remaining values of $l$ shows that $\binom{j_{+}}{l}\prod_{p=1}^{l}(k-j_{+}-p)$ coincides with $\binom{j_{-}}{l}\prod_{p=1}^{l}(k-j_{-}-p)$, and we deduce that $\mathcal{L}_{k,d}^{V,j_{+}}=\mathcal{L}_{k,d}^{V,j_{-}}$. Since Equation \eqref{adsj} is simpler when $j$ is smaller, we shall use the notation $\mathcal{L}_{k,d}^{V,j_{-}}$ in this case (in correspondence with the situation where $k=j_{+}+j_{-}+1$ with $j_{+} \geq d>j_{-}$ and $k>2d$, where only $\mathcal{L}_{k,d}^{V,j_{-}}$ is defined to begin with). \label{deptheigen}
\end{rmk}

\smallskip

The behavior of eigenfunctions with depth $d>0$ and weight $k$ between $d+1$ and $2d$ depends on the eigenvalue. In order to analyze them we shall investigate the expression for the coefficients $a_{d,j}^{(s)}$ from Equation \eqref{adsj} a little deeper, in order to transform it to a more reduced form as a rational function of the weight $k$. We shall make use of the following lemma, which will help us in later evaluations as well.
\begin{lem}
Let $0\leq\beta\leq\sigma$ be integers, and fix some constants $\xi$ and $\eta$. Then the sum $\sum_{\nu=0}^{\beta}(-1)^{\beta-\nu}\binom{\beta}{\beta-\nu}\prod_{u=1}^{\nu}(\xi-u)\prod_{w=\nu+1}^{\sigma}(\eta-w)$ can be written as $\prod_{w=\beta+1}^{\sigma}(\eta-w)\prod_{\mu=1}^{\beta}(\xi-\eta+\beta-\mu)$. \label{prodtrans}
\end{lem}

\begin{proof}
We apply induction on $\beta$. When this parameter vanishes the product over $\mu$ is trivial, and both sides equal $\prod_{w=1}^{\sigma}(\eta-w)$. We therefore assume that $\beta>0$ and that the formula holds for $\beta-1$. The binomial coefficient then splits as $\binom{\beta-1}{\beta-\nu}+\binom{\beta-1}{\beta-\nu-1}$. From the terms associated with the second summand we obtain $\sum_{\nu=0}^{\beta-1}(-1)^{\beta-\nu}\binom{\beta-1}{\beta-\nu-1}\prod_{u=1}^{\nu}(\xi-u)\prod_{w=\nu+1}^{\beta}(\eta-w)$, while in those involving the first summand we replace $\nu$ by $\nu+1$. This latter operation yields $-\sum_{\nu=0}^{\beta-1}(-1)^{\beta-\nu}\binom{\beta-1}{\beta-\nu-1}\prod_{u=1}^{\nu+1}(\xi-u)\prod_{w=\nu+2}^{\beta}(\eta-w)$, and we note that the product $\prod_{u=\alpha+1}^{\nu}(\xi-u)\prod_{w=\nu+2}^{\beta}(\eta-w)$ appears in both terms. In the first expression we have the extra multiplier $\eta-\nu-1$, while the second one is obtained after multiplying by $\xi-\nu-1$. As they come with different signs, this produces $\xi-\eta$ times $\sum_{\nu=0}^{\beta-1}(-1)^{\beta-\nu-1}\binom{\beta-1}{\beta-\nu-1}\prod_{u=1}^{\nu}(\xi-u)\prod_{w=\nu+1}^{\beta-1}(\eta-1-w)$, which is the sum associated with $\beta-1$, $\sigma-1$, $\xi$, and $\eta-1$ (note the total sign and the index change in $w$). Since the external multiplier is the one associated with $\mu=\beta$, the induction hypothesis implies that our sum gives the desired expression.
\end{proof}

Let us now prove the desired reduced form of the coefficients $a_{d,s}^{(j)}$.
\begin{lem}
The formula for the coefficient $a_{d,0}^{(j)}$ in Equation \eqref{adsj} reduces to $(-1)^{j}\frac{(d-j-1)!}{(d-1)!}\Big/\prod_{q=0}^{d-j-1}(k-2d+q)$. On the other hand, the coefficients with $j \geq s\geq1$ in that equation can be written as rational functions of $k$ in which the denominator is $\prod_{q=0}^{d-j-2}(k-2d+q)$. \label{redadsj}
\end{lem}
For $j=d-1$ the latter denominator is an empty product, meaning that $a_{d,d-1}^{(s)}$ is a polynomial in $k$ (with no denominator) for every $s\geq1$.

\begin{proof}
We consider the binomial coefficient $\binom{d-l}{s}$ from Equation \eqref{adsj} as a polynomial of degree $s$ in $l$, when $d$ is considered as a parameter. Since we also have the multiplier $(d-1-l)!$, we express the polynomial $\binom{d-l}{s}$ using the basis $\frac{(d+t-1-l)!}{(d-1-l)!}$ with $0 \leq t \leq s$ (this quotient is, for fixed $d$ and $t$, a polynomial of degree $t$ in $l$). It follows that the combination $\binom{d-l}{s}(d-1-l)!$ from Equation \eqref{adsj} can be written as $\sum_{t=0}^{s}b_{d,s,t}\frac{(d+t-1-l)!}{(d-1)!}$, with $b_{d,s,t}$ rational constants depending on $d$, $t$, and $s$. For $s=0$ we get a single term with $t=0$ and $b_{d,0,0}=1$. On the other hand, if $s\geq1$ then the coefficient $\binom{d-l}{s}$ involves a multiplier of $d-l$, so that the constant $b_{d,s,0}$ vanishes for such $s$ and the sum over $t$ goes only from 1 to $s$. We take a coefficient of $(-1)^{j}\frac{b_{d,s,t}}{(d-1)!}$ out, and write the factorial $(d+t-1-l)!$ as $\prod_{w=l+1}^{d+t-1}(d+t-w)$. We can then apply Lemma \ref{prodtrans} with the parameters $\beta=j$, $\sigma=d+t-1$, $\xi=k-j$, and $\eta=d+t$, and deduce that the sum $\sum_{l=0}^{j}(-1)^{j-l}\binom{j}{l}(d+t-1-l)!\prod_{p=1}^{l}(k-j-p)$ equals the product of $\prod_{w=j+1}^{d+t-1}(d+t-w)=(d+t-j-1)!$ (as $j<d$, this factorial is well-defined) and $\prod_{\mu=1}^{j}(k-j-d-t+j-\mu)=\prod_{q=d-t-j}^{d-t-1}(k-2d+q)$.

Now, for $s=0$ only the term with $t=0$ occurs, and our product over $q$ cancels with the relevant terms from the denominator of Equation \eqref{adsj}. Recalling the value of $b_{d,0,0}$, this gives the desired value of $a_{d,0}^{(j)}$. Turning our attention to the indices $s\geq1$, we get a sum over $t$, each of whose terms gives a different polynomial. We are interested in the common divisor of all these polynomials, so that we may ignore the coefficients $(-1)^{j}(d+t-j-1)!b_{d,s,t}$. Note that adding 1 to $t$ translates the set of indices $q$ involved by $-1$. Moreover, the minimal index $q$ appearing for $t=1$ is $d-1-j$, while the maximal one for $t=s$ is $d-s-1$. It follows that when $s \leq j$ all the terms with $d-1-j \leq q \leq d-s-1$ appear in the polynomials arising from every index $t$. We can thus cancel them from the denominator of $a_{d,s}^{(j)}$ in Equation \eqref{adsj}, and indeed just the asserted denominator remains.
\end{proof}

We can now extend the definition of the lifts $\mathcal{L}_{k,d}^{V,j}$ from Theorem \ref{eigenVgen} for some of the excluded weights for any of the indices $j$.
\begin{prop}
Assume that the weight $k$ is $d+1+e$ for some integer $0 \leq e<d$ as in Proposition \ref{eigenAsp}. In this case the lift $\mathcal{L}_{k,d}^{V,j}$ from Theorem \ref{eigenVgen} is well-defined for any $e<j<d$, and it is the unique lift for such modular forms also in these cases. Moreover, the eigenvalues $\lambda=j(k-1-j)$ with such $j$ are the only numbers $\lambda$ for which a lift from $\mathcal{M}_{k-2d}^{mer}(\rho)$ to $\mathcal{M}_{k-\infty,\lambda}^{sing,V}(V_{\infty}\otimes\rho)$ can be defined. \label{liftprobwt}
\end{prop}

\begin{proof}
First we see, as in the proof of Theorem \ref{eigenVgen}, that lifts exist only for at most $d-e$ values of $\lambda$. Indeed, Equation \eqref{recexp} is valid for $s>e+1$, since its denominator does not vanish, so that $a_{e+1}$ and $a_{e+2}$ are polynomials of degrees $d-e-1$ and $d-e-2$ in $\lambda$ respectively. But in the equation associated with $s=e+1$ in Corollary \ref{coeffLkdV} the coefficient $k-d-e-1$ in front of $a_{e}$ vanishes. This transforms this equation into the vanishing of $[(e+1)^{2}-d-\lambda]a_{e+1}-(e+2)a_{e+2}$, which is a polynomial of degree $d-e$ in $\lambda$. Note that for $e=0$ we get the same polynomial from the proof of Theorem \ref{eigenVgen} (since this happens with $k=d+1$), so that its roots are the ones given in that theorem.

Now, the value of $a_{d,s}^{(j)}$ from Equation \eqref{adsj} is finite for $s>e$, since the product over $q$ in the denominator was seen in the proof of Proposition \ref{eigenAsp} to be the non-zero number $(-1)^{d-s}\frac{(d-e-1)!}{(s-e-1)!}$ in this case. On the other hand, if $j \geq e\geq1$ then we can apply Lemma \ref{redadsj} with $s=e$, and find that $a_{d,e}^{(j)}$ has the same non-zero denominator as $a_{d,j+1}^{(j)}$. The expression defining $a_{d,e}^{(j)}$ in Equation \eqref{adsj} is therefore finite also for $k=d+1+e$. The verifications from Theorem \ref{eigenVgen} now show that for these values of $\lambda$ we have a non-trivial solution, yielding the $d-e$ roots of the polynomial for $\lambda$ (both for $e=0$ and for positive $e$). While these roots are not distinct in general (this is easily seen using the expression for $k$ and $j$ in Remark \ref{deptheigen}), the same limit argument from the end of the proof of Theorem \ref{eigenVgen} (using the finiteness of $a_{d,s}^{(j)}$ with $1 \leq s<e$ from Lemma \ref{redadsj} as well if necessary) shows that these are the solutions for the resulting polynomial in $\lambda$, and the only ones.

Let us now see for which of these values does a lift exist. For $j>e$ the value of $a_{d,0}^{(j)}$ from Lemma \ref{redadsj} is also finite (by the same argument), proving the well-definedness of $\mathcal{L}_{k,d}^{V,j}$ from Theorem \ref{eigenVgen} in this case. As for the uniqueness, note that the values of $a_{d,s}^{(j)}$ with $s>e$ are determined by the equations with $s>e+1$. In addition, if $a_{d,e}^{(j)}$ is given, then the equation with index $1<s \leq e$ determines $a_{d,s-1}^{(j)}$, and $a_{d,0}^{(j)}$ is determined using the equation with $s=0$. Therefore the difference between two lifts with subscripts $k=d+1+e$ and $d$ and superscript $e<j<d$ is a lift of the form $\mathcal{L}_{k,e}^{V,j}$, which does not exist by Theorem \ref{eigenVgen} since both $j$ and $k-1-j$ are larger than $e$ (the weight is $d+1+e>2e$, so that the consideration of such a lift is indeed the one appearing in that theorem). Therefore $\mathcal{L}_{k,d}^{V,j}$, which by Remark \ref{deptheigen} should be written as $\mathcal{L}_{k,d}^{V,j_{-}}$, is unique also in this case.

On the other hand, for $j=e$ the term with $q=d-j-1$ in the denominator of $a_{d,0}^{(j)}$ from Lemma \ref{redadsj} or Equation \eqref{adsj} vanishes, so that $\mathcal{L}_{d+1+e,d}^{V,e}$ cannot be defined as in Theorem \ref{eigenVgen}. Moreover, the proof of that theorem implies that when a lift exists, it must be the one described by that theorem. Note that the possibility of $\varphi$ being a non-zero constant (so that $\delta_{k-2d}^{d-s}\varphi$ itself may vanish) cannot occur, since the inequalities $e<j<d$ imply that $k$ cannot be $2d$ (since $e<d-1$). Therefore the lift $\mathcal{L}_{d+1+e,d}^{V,e}$ does not exist at all (using another interpretation like in Remark \ref{deptheigen}, with $j_{-}=e$ and $j_{+}=d$ say, cannot overcome this problem as well).
\end{proof}

\subsection{Sesqui-Harmonic Modular Forms}

The fact that lifts do not exist for some weights, depths, and eigenvalues does not mean that there cannot be different types of eigenfunctions associated with the parameters. While we shall see in Corollary \ref{eigenVtot} below that lifts (either to depth 0 as in Proposition \ref{eigenVd0} or to a higher depth as in Theorem \ref{eigenVgen} and Proposition \ref{liftprobwt}) produce all the eigenfunctions in most cases, the combinations of weights, depths, and eigenvalues excluded in Proposition \ref{liftprobwt} do have non-trivial eigenfunctions. To find those we shall need the notion of \emph{sesqui-harmonic modular forms}, as defined in \cite{[BDR]}. We make the obvious generalization of this notion as follows.
\begin{defn}
Given a weight $l$ and a representation (or multiplier system) of $\Gamma$, we call a modular form $\Psi\in\mathcal{M}_{l}^{sing}(\rho)$ \emph{sesqui-harmonic} if $\Delta_{l}\Psi\in\mathcal{M}_{l}^{mer}(\rho)$. We denote the the space of sesqui-harmonic elements of $\mathcal{M}_{l}^{sing}(\rho)$ by $\mathcal{M}_{l,ses}^{sing}(\rho)$, and its intersection with $\mathcal{M}_{l}^{an}(\rho)$ by $\mathcal{M}_{l,ses}^{an}(\rho)$. \label{sesquidef}
\end{defn}
Indeed, the Laplacian $\Delta_{l}$ can be written as the composition $\xi_{2-l}\circ\xi_{l}$ (up to constants arising from the normalizations of these operators) of the $\xi$-operators from \cite{[BFu]}. Therefore harmonic modular forms are those vanishing after two applications of $\xi$-operators, and \cite{[BDR]} defined sesqui-harmonic modular forms to be those that are eliminated after three such applications. Generalizing to arbitrary weights and representations, and noting that the $\xi$-operator is essentially an anti-holomorphic derivative, this becomes Definition \ref{sesquidef}.

It is clear that $\Delta_{l}$ takes an element of $\mathcal{M}_{l,ses}^{an}(\rho)$ to $\mathcal{M}_{l}^{hol}(\rho)$, but if $\Gamma$ has cusps then this image will be in $\mathcal{M}_{l}^{wh}(\rho)$ in general, by the growth condition imposed on elements of $\mathcal{M}_{l}^{an}(\rho)$ in Definition \ref{MFQMFdef}. By restricting to subspaces of $\mathcal{M}_{l,ses}^{an}(\rho)$ with stronger assumptions on the growth of the relevant parts, we can assure that their $\Delta_{l}$ images will be in $\mathcal{M}_{l}^{hol}(\rho)$ or in $\mathcal{M}_{l}^{cusp}(\rho)$. Results of \cite{[BDR]} (combined with \cite{[BFu]}), \cite{[JKK]}, and some others can be interpreted as showing that some scalar-valued holomorphic and weakly holomorphic modular forms of integral weight with respect to $SL_{2}(\mathbb{Z})$ are in the image of the appropriate restriction of $\Delta_{l}$. Using the vector-valued Poincar\'{e} series appearing in, e.g., \cite{[Ze2]}, generalized to arbitrary Fuchsian groups with cusps, one can prove surjectivity for some spaces of vector-valued modular forms as well. However, we shall only be needing special pre-images under $\Delta_{l}$ of the following meromorphic modular forms.
\begin{lem}
Take an integer $\upsilon\geq1$, a Fuchsian group $\Gamma$, and a representation $\rho$ of $\Gamma$. Then any element of $\mathcal{M}_{\upsilon+1}^{mer}(\rho)$ that is of the form $\delta_{1-\upsilon}^{\upsilon}\varphi=\partial_{\tau}^{\upsilon}\varphi$ for some $\varphi\in\mathcal{M}_{1-\upsilon}^{mer}(\rho)$ is the $\Delta_{\upsilon+1}$-image of a modular form $\Psi\in\mathcal{M}_{\upsilon+1,ses}^{sing}(\rho)$. Moreover, $\Psi$ can be taken such that $4y^{2}\partial_{\overline{\tau}}\Psi=\delta_{1-\upsilon}^{\upsilon-1}\varphi$, a case in which it is the sum of a meromorphic quasi-modular form of depth $\leq\upsilon$ and the expression $\sum_{p=1}^{\upsilon}\frac{(\upsilon-1)!}{(\upsilon-p)! \cdot p}\cdot\frac{\partial_{\tau}^{\upsilon-p}\varphi}{(-2iy)^{p}}$. \label{sesonholder}
\end{lem}

\begin{proof}
The paper \cite{[Ze1]}, and even its predecessor \cite{[A]}, shows that there exists an element $\mathbf{G}\in\mathcal{M}_{2}^{nm,\leq1}(\Gamma)$ (with the trivial representation of $\Gamma$) that is of the form $\tau \mapsto G(\tau)+\frac{1}{2iy}$, where $G$ is an element of $\widetilde{\mathcal{M}}_{2}^{mer,\leq1}(\Gamma)$ whose transformation law from Equation \eqref{QMFdef} is of the form $G(\gamma\tau)=j_{\gamma}^{2}(\tau)G(\tau)+j_{\gamma}'j_{\gamma}(\tau)$ (i.e., with $f_{1}=1$). Set $\Psi=-\sum_{h=1}^{\upsilon}\frac{(\upsilon-1)!}{(\upsilon-h)! \cdot h}\delta_{1-\upsilon}^{\upsilon-h}\varphi\cdot\mathbf{G}^{h}$, and recall that $\Delta_{\upsilon+1}$ is the composition $\delta_{\upsilon-1}\circ4y^{2}\partial_{\overline{\tau}}$. Now, $4y^{2}\partial_{\overline{\tau}}\mathbf{G}$ is $-1$ (since $G$ is meromorphic), and the proof of Corollary \ref{coeffLkdV} using Lemma \ref{LDeldelmer} shows that the image of $\delta_{1-\upsilon}^{\upsilon-h}\varphi$ under that operator is $h(\upsilon-h)\delta_{1-\upsilon}^{\upsilon-h-1}\varphi$ (this is well-defined also for $h=\upsilon$ since the coefficient vanishes then). Therefore $4y^{2}\partial_{\overline{\tau}}\Psi$ decomposes as the sum of $\sum_{h=1}^{\upsilon}\frac{(\upsilon-1)!}{(\upsilon-h)!}\delta_{1-\upsilon}^{\upsilon-h}\varphi\cdot\mathbf{G}^{h-1}$ and $-\sum_{h=1}^{\upsilon-1}\frac{(\upsilon-1)!}{(\upsilon-h-1)!}\delta_{1-\upsilon}^{\upsilon-h-1}\varphi\cdot\mathbf{G}^{h}$. A simple index change shows that the latter sum cancels with the elements associated with $1<h\leq\upsilon$ in the former one (this amounts to no cancelation at all if $\upsilon=1$), and we indeed obtain the value $\delta_{1-\upsilon}^{\upsilon-1}\varphi$ from the second assertion. The first assertion now follows after applying $\delta_{\upsilon-1}$.

We now turn to the explicit formula for $\Psi$ up to meromorphic expressions (recall that the defining property of $\Psi$ is its image under $4y^{2}\partial_{\overline{\tau}}$, and altering $\Psi$ by an element of $\mathcal{M}_{\upsilon+1}^{mer}(\rho)$ does not affect this property). We expand $\delta_{1-\upsilon}^{\upsilon-h}\varphi$ using Equation (56) of \cite{[Za]} (in which the internal product equals $(-1)^{m}\frac{(h+m-1)!}{(h-1)!}$) and $\mathbf{G}^{h}=\big(G+\frac{1}{2iy}\big)^{h}$ binomially, and since $(h-1)!$ cancels with most of the numerator of the binomial coefficient $\binom{h}{t}$ get that $\Psi(\tau)$ is $-\sum_{h=1}^{\upsilon}\sum_{m=0}^{\upsilon-h}\sum_{t=0}^{h}\frac{(\upsilon-1)!(-1)^{m}(h+m-1)!}{m!(\upsilon-h-m)!t!(h-t)!}\frac{\partial_{\tau}^{\upsilon-h-m}\varphi \cdot G(\tau)^{t}}{(2iy)^{m+h-t}}$. We set $p=m+h$, and find that for every pair of indices $1 \leq p\leq\upsilon$ and $0 \leq t \leq p$ the term $-\frac{(\upsilon-1)!(p-1)!}{(\upsilon-p)!t!}\cdot\frac{\partial_{\tau}^{\upsilon-p}\varphi \cdot G(\tau)^{t}}{(2iy)^{p-t}}$ is multiplied by $\sum_{h=\max\{1,t\}}^{p}\frac{(-1)^{p-h}}{(p-h)!(h-t)!}$, an expression that we write as $\frac{(-1)^{p-t}}{(p-t)!}\big[\sum_{h=t}^{p}(-1)^{h-t}\binom{p-t}{h-t}-\delta_{t,0}\big]$ using the term $\delta_{t,0}$ to cover for the fact that the required sum over $h$ starts from 1 when $t=0$. Now, the (modified) sum over $h$ reduces to $\delta_{t,p}$, and the contribution of these terms combines to the meromorphic expression $-\sum_{p=1}^{\upsilon}\frac{(\upsilon-1)!}{(\upsilon-p)! \cdot p}\partial_{\tau}^{\upsilon-p}\varphi \cdot G(\tau)^{p}$ (which is clearly an element of $\widetilde{\mathcal{M}}_{\upsilon+1}^{mer,\leq\upsilon}(\rho)$). The remaining terms, with $t=0$, are now easily seen to produce the asserted expression.
\end{proof}
The quasi-modular form $G$ from the proof of Lemma \ref{sesonholder} is called a \emph{automorphic integral of weight 2 and trivial multiplier system, with rational period functions} in the terminology of \cite{[Kn2]} (which would consider the weight as $-2$, actually). The simplest example of such a function is defined for $\Gamma=SL_{2}(\mathbb{Z})$, where we can take the holomorphic weight 2 quasi-modular Eisenstein series $E_{2}$, normalized such that $\mathbf{E}_{2}(\tau)=E_{2}(\tau)+\frac{1}{2iy}$ is modular of weight 2. For this $\Gamma$, when $\rho$ is the trivial representation, we illustrate the case $\upsilon=1$ in Lemma \ref{sesonholder} by choosing $\varphi$ to be the classical weakly holomorphic hauptmodul $j$. Then the associated function $\Psi$ can be taken as $-j\mathbf{E}_{2}$, the image of which under $4y^{2}\partial_{\overline{\tau}}$ is indeed $-4y^{2}j\cdot\partial_{\overline{\tau}}\frac{1}{2iy}=j$, so that $\Delta_{2}(-j\mathbf{E}_{2})=\delta_{0}j=j'$. Or course, $j$ can be replaced by any non-constant element of $\mathbb{C}(j)$ here, or alternatively $j'$ can be replaced by meromorphic modular form of weight 2 and level 1, provided that its constant term at the cusp vanishes. Similar concrete examples can be presented in other weights as well, using, e.g., the constructions from \cite{[P1]}, \cite{[P2]}, and others, which also describe the relations to the Eichler integrals, defined essentially in \cite{[E]}.

\begin{cor}
Elements of $\mathcal{M}_{\upsilon+1,ses}^{sing}(\rho)$ whose $\Delta$-images lie in the subspace $\partial_{\tau}^{\upsilon}\big(\mathcal{M}_{1-\upsilon}^{mer}(\rho)\big)$ of $\mathcal{M}_{\upsilon+1}^{mer}(\rho)$ decompose into a nearly meromorphic part and the non-holomorphic part of a harmonic Maa\ss\ form (which may have singularities as well). The space of functions satisfying the stronger equality from Lemma \ref{sesonholder} are the ones whose non-holomorphic Maa\ss\ form part vanishes, i.e., the elements of the intersection $\mathcal{M}_{\upsilon+1,ses}^{sing}(\rho)\cap\mathcal{M}_{\upsilon+1}^{nm}(\rho)$, which we denote by $\mathcal{M}_{\upsilon+1,ses}^{nm}(\rho)$. \label{sesnh}
\end{cor}

\begin{proof}
Since the difference between two elements of $\mathcal{M}_{\upsilon+1,ses}^{sing}(\rho)$ mapping to the same function $\partial_{\tau}^{\upsilon}\varphi$ is harmonic, every element $\Xi\in\mathcal{M}_{\upsilon+1}^{sing}(\rho)$ satisfying $\Delta_{\upsilon+1}\Xi=\partial_{\tau}^{\upsilon}\varphi$ is the sum of $\Psi$ from Lemma \ref{sesonholder} and an element of $\mathcal{M}_{\upsilon+1,0}^{sing}(\rho)$. But the description of the latter space in \cite{[BFu]} (the results of which easily extend to the vector-valued case with singularities, also without $\Gamma$ having cusps) shows that its elements consist of a meromorphic part and an non-meromorphic part. The meromorphic part does not affect the image under $4y^{2}\partial_{\overline{\tau}}$, while the image of the non-meromorphic part under that operator is an element of $\overline{\mathcal{M}_{1-\upsilon}^{mer}(\rho)}/y^{\upsilon-1}$ (as follows immediately from the $\xi$-image of that part being meromorphic). But elements of $\mathcal{M}_{\upsilon+1,0}^{sing}(\rho)$ without non-holomorphic parts lie in $\mathcal{M}_{\upsilon+1}^{mer}(\rho)$, and this space is contained in $\mathcal{M}_{\upsilon+1,0}^{nm}(\rho)$.
\end{proof}

\begin{rmk}
The equality between $\delta_{1-\upsilon}^{\upsilon}\varphi$ and $\partial_{\tau}^{\upsilon}(\varphi)$, implying the meromorphicity of $\delta_{1-\upsilon}^{\upsilon}\varphi$, is Bol's identity mentioned above. In addition, the proofs of Lemma \ref{sesonholder} and Corollary \ref{sesnh} show that elements of $\mathcal{M}_{\upsilon+1,ses}^{nm}(\rho)$ lie in $\mathcal{M}_{\upsilon+1}^{nm,\leq\upsilon}(\rho)$, and unless they are meromorphic (i.e., with trivial image under $4y^{2}\partial_{\overline{\tau}}$), their depth is precisely $\upsilon$. The depth of the meromorphic quasi-modular part from Lemma \ref{sesonholder} is also exactly $\upsilon$ wherever $\varphi$ is not a constant (note that $\varphi$ can be a non-zero constant if $\upsilon=1$). Note that the function $\Psi$ constructed in Lemma \ref{sesonholder} is not canonical, since it depends on the choice of $\mathbf{G}$, and changing $\mathbf{G}$ will alter $\Psi$ from Lemma \ref{sesonholder} by a non-trivial meromorphic function. We remark that by the results from \cite{[Ze1]} we can take, in case $\Gamma$ has cusps, the modular form $\mathbf{G}$ to be in $\mathcal{M}_{2}^{nh,\leq1}(\Gamma)$ (or equivalently $G\in\widetilde{\mathcal{M}}_{2}^{hol,\leq1}(\Gamma)$), so that for $\varphi\in\mathcal{M}_{1-\upsilon}^{hol}(\rho)$ we get $\Psi\in\mathcal{M}_{\upsilon+1,ses}^{an}(\rho)$ (and in $\mathcal{M}_{\upsilon+1}^{nh,\leq\upsilon}(\rho)$). If, furthermore, $\mathcal{M}_{2}^{hol}(\Gamma)=\{0\}$, then we can construct a canonical pre-image of $\partial_{\tau}^{\upsilon}\varphi$ in $\mathcal{M}_{\upsilon+1,ses}^{an}(\rho)$ under $\Delta_{\upsilon+1}$ in the method, but typically such a canonical pre-image does not exist (especially since for $\Gamma$ without cusps we cannot take $\mathbf{G}$ and $G$ without singularities---see, e.g., Theorem 4 of \cite{[A]}). \label{depsesE2}
\end{rmk}

\smallskip

Apart from the sesqui-harmonic modular forms from Definition \ref{sesquidef}, we shall require their images under weight raising operators.
\begin{lem}
Assume that $\Psi\in\mathcal{M}_{l,ses}^{sing}(\rho)$ and $\varphi\in\mathcal{M}_{l}^{mer}(\rho)$ satisfy $\Delta_{l}\Psi=\varphi$. Then we have the equality $\big(\Delta_{l+2m}+m(l+m-1)\big)\delta_{l}^{m}\Psi=\delta_{l}^{m}\varphi$ for every $m\in\mathbb{N}$. Moreover, if $m\geq1$ then $4y^{2}\partial_{\overline{\tau}}(\delta_{l}^{m}\Psi)$ equals $\delta_{l}^{m-1}\varphi-m(l+m-1)\delta_{l}^{m-1}\Psi$. \label{wrsesqui}
\end{lem}

\begin{proof}
Lemma \ref{LDeldelmer} shows that the combination $4y^{2}\partial_{\overline{\tau}}\delta_{r}$ is $\Delta_{r}-r$, and after composing with $\delta_{r}$ from the left it also follows that $\Delta_{r+2}\delta_{r}=\delta_{r}(\Delta_{r}-r)$, or equivalently $\delta_{r}\Delta_{r}=(\Delta_{r+2}+r)\delta_{r}$. We can now prove the first assertion by an induction similar to the one from the proof of Corollary \ref{coeffLkdV}, in which the case $m=0$ is true by definition. Assuming that this equality holds for $m$, we apply $\delta_{l+2m}$, giving us the desired right hand side with the sum of $\delta_{l+2m}\Delta_{l+2m}\delta_{l}^{m}\Psi$ and $m(l+m-1)\delta_{l}^{m+1}\Psi$ on the left hand side. But we have seen that $\delta_{l+2m}\Delta_{l+2m}$ can be written as $(\Delta_{l+2m+2}+l+2m)\delta_{l+2m}$, and adding the image of $\delta_{l}^{m}\Psi$ indeed gives the desired expression with $m+1$. This proves the first equality. Letting now $4y^{2}\partial_{\overline{\tau}}$ operate on $\delta_{l}^{m}\psi=\delta_{l+2m-2}(\delta_{l}^{m-1}\Psi)$ for some $m\geq1$, the beginning of our proof shows that we get $\big(\Delta_{l+2m-2}-(l+2m-2)\big)\delta_{l}^{m-1}\Psi$. But using the first equality, the first operator sends $\delta_{l}^{m-1}\Psi$ to $\delta_{l}^{m-1}\varphi-(m-1)(l+m-2)\delta_{l}^{m-1}\Psi$. Combining this with the second term yields the second assertion as well.
\end{proof}

\smallskip

There are two types of eigenfunctions of $\Delta_{k-\infty}^{V}$ using sesqui-harmonic modular forms. The simpler one is the following. Recall that if $k=d+1+e$ and $e<d$, then the operator $\delta_{k-2d}^{d-e}=\delta_{e+1-d}^{d-e}$ is just $\partial_{\tau}^{d-e}$ (Bol's identity again), hence it takes elements of $\mathcal{M}_{k-2d}^{mer}(\rho)=\mathcal{M}_{e+1-d}^{mer}(\rho)$ to elements $\mathcal{M}_{d+1-e}^{mer}(\rho)$.
\begin{prop}
Take a depth $d\geq1$, a weight $k=d+1+e$ for some integer $0 \leq e<d$, and an element $\varphi\in\mathcal{M}_{k-2d}^{mer}(\rho)$ with $\Gamma$ and $\rho$ as usual. Then there exists an element $F\in\mathcal{M}_{k-\infty,de}^{sing,V,d}(V_{\infty}\otimes\rho)$ whose presentation from Lemma \ref{Lapeval} is with $F_{d}=\varphi$. \label{s0ses}
\end{prop}

\begin{proof}
The proof of Proposition \ref{liftprobwt} shows that we can set $F_{s}=a_{d,s}^{(e)}\delta_{k-2d}^{d-s}\varphi$ for any $1 \leq s \leq d$, expressions whose finiteness for our value of $k$ is provided by Lemma \ref{redadsj}, and all the equalities with $1 \leq s \leq d+1$ in Corollary \ref{compeigen} will be satisfied. It therefore remains to find a function $F_{0}$ that will satisfy the equation with $s=0$, which now takes the form $(\Delta_{k}+de)F_{0}=-a_{d,1}^{(e)}\delta_{k-2d}^{d}\varphi$ (the right hand side is $-\delta_{k-2}F_{1}$). As the operator $\delta_{k-2d}^{d}=\delta_{e+1-d}^{d}$ on the right hand side decomposes as $\delta_{d+1-e}^{e}\circ\partial_{\tau}^{d-e}$, we seek a pre-image under $\Delta_{d+1+e}+de$ of a multiple of $\delta_{d+1-e}^{e}(\partial_{\tau}^{d-e}\varphi)$. But Lemma \ref{sesonholder}, with $\upsilon=d-e\geq1$, provides us with an element $\Psi\in\mathcal{M}_{d+1-e,ses}^{sing}(\rho)$ satisfying $\Delta_{d+1-e}\Psi=\partial_{\tau}^{d-e}\varphi$. Setting $l=d+1-e$ and $m=e$ in Lemma \ref{wrsesqui} (so that indeed $l+2m=d+1+e=k$), we find that taking $-a_{d,1}^{(e)}\delta_{d+1-e}^{e}\Psi$ for the function $F_{0}$ satisfies the desired equation. Now, the value of $a_{d,1}^{(e)}$ (for arbitrary $k$) is $(-1)^{e}(d-e)\frac{(d-e-1)!}{(d-1)!}\Big/\prod_{q=0}^{d-e-2}(k-2d+q)$. Indeed, either we recall the value $(-1)^{e}\frac{(d-e-1)!}{(d-1)!}\Big/\prod_{q=0}^{d-e-1}(k-2d+q)$ of $a_{d,0}^{(e)}$ from Lemma \ref{redadsj} and use the equality $a_{d,1}^{(e)}=(d-e)(k-1-d-e)a_{d,0}^{(e)}$ from Corollary \ref{coeffLkdV} (with $\lambda=e(k-1-e)$), or we just note that with $s=1$ the numbers $b_{d,1,0}$ and $b_{d,1,1}$ from the proof of Lemma \ref{redadsj} are 0 and 1 respectively. As for $k=d+1+e$, the product over $q$ is just $(-1)^{d-e-1}(d-e-1)!$, so that the coefficient $-a_{d,1}^{(e)}$ of $\delta_{d+1-e}^{e}\Psi$ is just $(-1)^{d}\frac{(d-e)}{(d-1)!}$. Now just add this $F_{0}$ to the definition of $F$.
\end{proof}

\begin{rmk}
We cannot present $F$ here as a lift of $\varphi$ into $\mathcal{M}_{k-\infty,de}^{sing,V,d}(V_{\infty}\otimes\rho)$, since it depends on the choice of $\Psi$ (though see Remark \ref{depsesE2} above for a case where this becomes possible). On the other hand, $\Psi$ determines $\partial_{\tau}^{d-e}\varphi$, which in turn determines $\varphi$ itself up to polynomials of degree $d-e-1$. As the only polynomials that are modular are the constant ones, of weight 0 (this is essentially the statement that the only representation $V_{m}$ containing invariant vectors is the trivial representation $V_{0}$), we find that $\Psi$ determines $\varphi$ unless $e=d-1$ and $k-2d=0$. We can therefore write, at least if $e<d-1$ (see Remark \ref{consteigen} below for the remaining case), the modular form $F$ from Proposition \ref{s0ses} as $\mathcal{L}_{d+1+e,d}^{V,e}(\Psi)$ (but recalling that its corresponding function $F_{d}$ is \emph{not} $\Psi$ but $\varphi$). Note that if we insist on the extra condition that $4y^{2}\partial_{\overline{\tau}}\Psi=\delta_{1-\upsilon}^{\upsilon-1}\varphi$ from Lemma \ref{sesonholder} (i.e., consider elements of the space $\mathcal{M}_{d+1-e,ses}^{nm}(\rho)$ from Corollary \ref{sesnh}), then when $\upsilon=d-e$ is 1 the function $\Psi$ still determines $\varphi$ as its image under $4y^{2}\partial_{\overline{\tau}}$. We also remark that the eigenvalue $de$ in Proposition \ref{s0ses} corresponds, with $k=d+1+e$, to the limit parameter $j$ from Theorem \ref{eigenVgen} and Proposition \ref{liftprobwt} being $e$, the maximal value of $j$ for which no lift can defined by that proposition. In view of Corollary \ref{eigenVtot} below, we observe that with these values of $k$ and $j$ we have $j_{-}=e$ and $j_{+}=d$. \label{liftpair}
\end{rmk}

\smallskip

For the remaining eigenfunctions, not appearing in Theorem \ref{eigenVgen} and Propositions \ref{liftprobwt} and \ref{s0ses}, let us first consider elements $\varphi\in\mathcal{M}_{k-2d}^{mer}(\rho)$ that are themselves $\Delta_{k-2d}$-images from $\mathcal{M}_{k-2d,ses}^{sing}(\rho)$ (though we will be able to remove this restriction later). More explicitly, if $\Xi\in\mathcal{M}_{k-2d,ses}^{sing}(\rho)$ satisfies $\Delta_{k-2d}\Xi=\varphi$, then we are looking at elements $F\in\mathcal{M}_{k-\infty}^{sing}(V_{\infty}\otimes\rho)$ whose expansion from Lemma \ref{Lapeval} (with the depth $d\geq1$) consists of functions $F_{s}$ that are linear combinations of $\delta_{d-2k}^{d-s}\varphi$ and $\delta_{d-2k}^{d-s}\Xi$. Moreover, the latter function appears only if $s<d$, since we recall that when looking for eigenfunctions the equation with $s=d+1$ in Corollary \ref{compeigen} implies the meromorphicity of $F_{d}$ (this is how we defined $\varphi$ to begin with). Combining the expressions from Lemmas \ref{LDeldelmer} and \ref{wrsesqui}, we obtain the following analogue of Corollary \ref{coeffLkdV}.
\begin{cor}
Let $d\geq1$ and $\Gamma$ be as usual, take some weight $k$ and an appropriate multiplier system $\rho$, and consider a function $F\in\mathcal{M}_{k-\infty}^{sing}(V_{\infty}\otimes\rho)$ and an element $\varphi\in\mathcal{M}_{k-2d}^{mer}(\rho)$ that is the $\Delta_{k-2d}$-image of the sesqui-harmonic modular form $\Xi\in\mathcal{M}_{k-2d,ses}^{sing}(\rho)$. Assume that there is an integer $1 \leq e<d$ such that the function $F_{s}$ from Lemma \ref{Lapeval} is of the form $a_{s}\delta_{k-2d}^{d-s}\varphi$ if $e<s \leq d$, with $a_{d}=1$, but for $0 \leq s \leq e$ it is of the form $b_{s}\delta_{k-2d}^{d-s}\Xi+c_{s}\delta_{k-2d}^{d-s}\varphi$, and $b_{e}\neq0$. Then the function $-F_{s,\Delta}^{V}$ from that lemma equals
\[\big\{(d-s)(k-1-d-s)a_{s}-(s+1)a_{s+1}-(s-1)\big[(d-s+1)(k-d-s)a_{s-1}-sa_{s}\big]\big\}\delta_{k-2d}^{d-s}\varphi\]
for $e+1<s \leq d$, while for $0 \leq s<e$ it is the sum of
\[\big\{(d-s)(k-1-d-s)b_{s}-(s+1)b_{s+1}-(s-1)\big[(d-s+1)(k-d-s)b_{s-1}-sb_{s}\big]\big\}\delta_{k-2d}^{d-s}\Xi\]
and
\[\big\{(d-s)(k-1-d-s)c_{s}-(s+1)c_{s+1}-(s-1)\big[(d-s+1)(k-d-s)c_{s-1}-sc_{s}\big]+\] \[+(s-1)b_{s-1}-b_{s}\big\}\delta_{k-2d}^{d-s}\varphi,\] where for $s=0$ we define $b_{-1}$ and $c_{-1}$ to be 0. The function with $s=e$ is as in the latter expression, but with the term with $b_{e+1}$ omitted and the coefficient $c_{e+1}$ replaced by $a_{e+1}$. On the other hand, for $-F_{e+1,\Delta}^{V}$ we get the sum of
\[(d-e-1)(k-2-d-e)a_{e+1}-(e+2)a_{e+2}-e\big[(d-e)(k-d-e-1)c_{e}-(e+1)a_{e+1}\big]+eb_{e}\] times $\delta_{k-2d}^{d-e-1}\varphi$, and $-e(d-e)(k-d-e-1)b_{e}\delta_{k-2d}^{d-e-1}\Xi$. \label{expwithPsi}
\end{cor}

\begin{proof}
The images of the multiples of $\delta_{k-2d}^{d-s}\varphi$ under $\Delta_{k-2s}$ and of the multiples of $\delta_{k-2d}^{d-s+1}\varphi$ under $4y^{2}\partial_{\overline{\tau}}$ were evaluated in the proof of Corollary \ref{coeffLkdV}. On the other hand, Lemma \ref{wrsesqui} provides us with the values of the expressions $\Delta_{k-2s}\delta_{k-2d}^{d-s}\Xi$ and $4y^{2}\partial_{\overline{\tau}}\delta_{k-2d}^{d-s+1}\Xi$. Gathering the resulting multiples of $\delta_{k-2d}^{d-s}\varphi$ and $\delta_{k-2d}^{d-s}\Xi$ from all the terms appearing in Lemma \ref{Lapeval}, and observing for which index we have to use which formula, we obtain the asserted expressions.
\end{proof}

We can now determine the form of the remaining eigenfunctions. Since it will be difficult to obtain an explicit expression for the functions in general, we shall bypass this difficulty using the following lemma.
\begin{lem}
For $\Gamma$ and $\rho$ as usual, take a weight $l$, and assume that the element $F\in\mathcal{M}_{l-\infty}^{sing}(V_{\infty}\otimes\rho)$ is of the form described in Corollary \ref{expwithPsi}, with $k=l$. Then the element $\tilde{\delta}_{l}F$ of $\mathcal{M}_{l+2-\infty}^{sing}(V_{\infty}\otimes\rho)$ is also of this form, with the weight $k=l+2$, the parameters $d+1$ and $e+1$, and its component with maximal index $d+1$ is $-d\varphi$. Moreover, if $F$ is in $\mathcal{M}_{l-\infty,\lambda}^{sing,V}(V_{\infty}\otimes\rho)$, then $\tilde{\delta}_{l}F\in\mathcal{M}_{l+2-\infty,\lambda+l}^{sing,V}(V_{\infty}\otimes\rho)$. \label{solextind}
\end{lem}

\begin{proof}
The evaluation of $\tilde{\delta}_{l}F$ is carried out explicitly in the proof of Lemma \ref{Lapeval}. As the new component with index $s$ is $\delta_{l-2s}F_{s}+(1-s)F_{s-1}$, the form from Corollary \ref{expwithPsi} and the fact that $d>e\geq1$ immediately imply the first assertion (including the formula for the component with index $d+1$). For the second one, note that the assertions of Lemma \ref{LDeldelmer} continue to hold if we replace the spaces of the form $\mathcal{M}_{l,\mu}^{sing}(\rho)$ by the corresponding spaces $\mathcal{M}_{l-\infty,\mu}^{sing,V}(V_{\infty}\otimes\rho)$ and the operator $\delta_{l}$ by the operator $\tilde{\delta}_{l}$ from Corollary \ref{comim}. This can either be seen using part $(vi)$ of Proposition \ref{comrels} (showing that the commutation relation between the $\tilde{\delta}_{l}$'s and $y^{2}\partial_{\overline{\tau}}$ are the same as for $\delta_{l}$), or via Theorem \ref{difopShim} and the relations with $\widetilde{\mathcal{M}}_{l}^{sing}(\rho)$ (after the limit map from Corollary \ref{comim}).
\end{proof}

\begin{thm}
A function $F$ of the form described in Corollary \ref{expwithPsi} can be an eigenfunction for some eigenvalue $\lambda$ only if the weight $k$ is $d+1+e$ and $\lambda$ is one of the distinct $e$ numbers $j(k-1-j)$ with $0 \leq j<e$. With these parameters we do not need $\varphi$ to have a pre-image $\Xi$ under $\Delta_{k-2d}=\Delta_{e+1-d}$, but it suffices to take $\Psi\in\mathcal{M}_{d+1-e,ses}^{nm}(\rho)$ with $4y^{2}\partial_{\overline{\tau}}\Psi=\delta_{e+1-d}^{d-e-1}\varphi$ as in Lemma \ref{sesonholder}, and replace each $\delta_{k-2d}^{d-s}\Xi$ with $s \leq e$ in Corollary \ref{expwithPsi} by $\delta_{d+1-e}^{e-s}\Psi$. The number $a_{s}$ is then $a_{d,s}^{(j)}$ from Equation \eqref{adsj} for any $s>e$, while for $s \leq e$ the number $b_{s}$ is also the expression from Equation \eqref{adsj}, but multiplied by $e-d$ and with the value $q=d-e-1$ omitted from the product in the denominator. Moreover, the parameter $c_{e}$ is free, and its value determines all the coefficients $c_{s}$ with $0 \leq s<e$. \label{lasteigenV}
\end{thm}

\begin{proof}
For $F$ to be an eigenfunction of $\Delta_{k-\infty}^{V}$ we have to compare each $-F_{s,\Delta}^{V}$, the expressions for which are given in Corollary \ref{expwithPsi}, with $\lambda F_{s}$. Since $F_{e+1}$ does not involve $\Xi$ but $-F_{e+1,\Delta}^{V}$ has the term $-e(d-e)(k-d-e-1)b_{e}\delta_{k-2d}^{d-e-1}\Xi$, and we know that neither $e$ nor $d-e$ can vanish, equality may hold only if the weight is the asserted one. An application of Lemma \ref{sesonholder} with $\upsilon=d-e$ thus provides us with the existence of $\Psi$, and allows us to write $\delta_{d+1-e}^{e-s}\Psi$ instead of each $\delta_{k-2d}^{d-s}\Xi$.

Next, we prove that the polynomial in $\lambda$ determining the eigenvalues for a general weight in Theorem \ref{eigenVgen} is the same one that we obtain here. To see this, we forget the known value of $k$ for a moment (but substitute it back later), and recall that this polynomial was obtained by the determination of the numbers $a_{s}$ in decreasing order via Equation \eqref{recexp} and using the equality with $s=1$ in Corollary \ref{coeffLkdV} involving $a_{1}$ and $a_{2}$ alone. Now, in our case we can express $a_{s-1}$ for $s>e+1$ via Equation \eqref{recexp}, since the functions $F_{s}$ with $s>e$ have the same form. The expression for $a_{e}$ in that equation, with $s=e+1$, involves the denominator $e(d-e)(k-d-e-1)$, which with our weight vanishes. But when we consider the expression with $s=e+1$ in Corollary \ref{expwithPsi}, the parameter appearing there in the place of $a_{e}$ is $b_{e}$, whose resulting value is $\frac{[(d-e-1)(k-2-d-e)+e(e+1)-\lambda]a_{e+1}-(e+2)a_{e+2}}{-e}$ plus $(d-e)(k-d-e-1)c_{e}$. The multiplier in front of $c_{e}$ vanishes, so that this expression determines the value of $b_{e}$ in the same manner from Theorem \ref{eigenVgen}, but with an extra multiplier of $e-d$ and with the problematic difference $k-d-e-1$ omitted from the denominator. Therefore $b_{e}$ indeed takes the asserted value.

For the other parameters $b_{s}$ with $s<e$ we use the equalities between the coefficients multiplying $\delta_{k-2d}^{d-s}\Xi$ (or $\delta_{d+1-e}^{e-s}\Psi$) in $\lambda F_{s}$ and the expression from Corollary \ref{expwithPsi} for $-F_{s,\Delta}^{V}$. Letting $k$ be general again for a moment, the resulting expression for $b_{e-1}$ is the same as in Equation \eqref{recexp}, but with $a_{e}$ replaced by $b_{e}$ and the term with $a_{e+1}$ missing. Observing that the ratio between $b_{e}$ and $a_{e}$ is the coefficient $(d-e)(k-d-e-1)$, the product of $a_{e+1}$ (which is finite also for our value of $k$) by this coefficient vanishes for $k=d+1+e$. It follows (by comparing with Theorem \ref{eigenVgen}) that $b_{e-1}$ attains the value of $(e-d)a_{e-1}$ from that theorem with the (vanishing) term $k-d-e-1$ omitted from the denominator. The expressions for $b_{s-1}$ with $1<s \leq e$ arising from the comparison of the parts involving $\delta_{k-2d}^{d-s}\Xi=\delta_{d+1-e}^{e-s}\Psi$ in $-F_{s,\Delta}^{V}$ and $\lambda F_{s}$ is as in Equation \eqref{recexp} (with every $a$ replaced by $b$), and the relation from the equality with $s=1$ is again as in the proof of Theorem \ref{eigenVgen} (with the same modification). We therefore indeed get, up to a global multiplying coefficient, the same polynomial in $\lambda$ from that theorem, and its roots are as described there.

We now follow the proof of Proposition \ref{liftprobwt}, but noting the following difference. The same numbers $a_{s}$ with $s>e$ are still defined via Equation \eqref{recexp}, but the expression we used here for establishing the value of $b_{e}$ (with the explicit value of $k$) is precisely the one that is assumed to vanish in that proposition (indeed, they come from essentially the same equation, but in our case we have the extra contribution with $b_{e}$). It follows that the roots for which $b_{e}\neq0$ (as we assume in Corollary \ref{expwithPsi}) are those from Theorem \ref{eigenVgen} that do not appear as roots in Proposition \ref{liftprobwt}, which were seen in that proposition to be those with $0 \leq j<e$. Moreover, they are all distinct (see Remark \ref{deptheigen}). In addition, for every such $j$ the verifications from Theorem \ref{eigenVgen} using the values from Equation \eqref{adsj} and the relation between our value of $b_{s}$ with $0 \leq s \leq e$ and the one of the associated number $a_{s}$ from that theorem determine $b_{s}$ as the asserted value, which is finite since the omitted value of $q$ is precisely the one that causes the denominator of that equation to vanish for $k=d+1+e$ and $s \leq e$. In particular, any relation that holds between the numbers $a_{d,s}^{(j)}$ with $s \leq e$ is also valid between the numbers $b_{s}$.

We have therefore established the value of the weight, the possible eigenvalues, and the values of the parameters $a_{s}$ and $b_{s}$. We have already seen that $c_{e}$ comes with a vanishing coefficient in the equality arising from $-F_{e+1,\Delta}^{V}$ in Corollary \ref{expwithPsi}, but we have to show that a solution $\{c_{s}\}_{s=0}^{e}$ to the equations comparing the coefficients of $\delta_{k-2d}^{d-s}\varphi$ in $-F_{s,\Delta}^{V}$ and $\lambda F_{s}$ exists. While these are $e+1$ equations in $e+1$ indeterminates, our values of $\lambda$ are precisely those for which the resulting matrix is singular. We shall prove the existence of such coefficients directly only for $j=0$, where we take $c_{s}=b_{s}\sum_{m=1}^{e-s}\frac{1}{m(d-e+m)}$ (this is 0 for $s=e$). Then $(s-1)(d+1-s)(e+1-s)c_{s-1}$ involves a sum up to $e-s+1$, but a cancelation with the term $(s-1)b_{s-1}$ reduces this sum to go only up to $e-s$. We add and subtract $\frac{(s+1)b_{s+1}}{(e-s)(d-s)}$, and obtain the relation among the numbers $b_{s}$ (multiplied by the scalar $\sum_{m=1}^{e-s}\frac{1}{m(d-e+m)}$), which vanishes by our value of $\lambda$. The remaining expression is $\frac{(s+1)b_{s+1}}{(k-1-d-s)(d-s)}-b_{s}$. But as this difference vanishes for the numbers $a_{d,s}^{(j)}$ (a simple examination of Equation \eqref{adsj} with $j=0$ reveals this), it vanishes also here, and we have a solution for $j=0$.

This proves the existence of such eigenfunctions with $j=0$. Taking now a general index $j<e<d$, we have an element of $\mathcal{M}_{k-2j-\infty,0}^{sing,V}(V_{\infty}\otimes\rho)$ with depth $d-j$, that takes the form from Corollary \ref{expwithPsi} with the parameter $e-j$ (indeed, the weight $k-2j$ equals $(d-j)+1+(e-j)$). Applying Lemma \ref{solextind} shows that the image of our element under $(-1)^{j}\frac{(d-1-j)!}{(d-j)!}\tilde{\delta}_{k-2j}^{j}$ lies in $\mathcal{M}_{k-\infty,0}^{sing,V}(V_{\infty}\otimes\rho)$, is of the required form with the parameter $e$, and has depth $d$ with $F_{d}=\varphi$. The additional multipliers appearing in the component of maximal index in that lemma are canceled with our external coefficient. Moreover, it follows from that lemma that the function under consideration is an eigenfunction of $\Delta_{k-\infty}^{V}$, and the eigenvalue, which is the sum of the numbers $k-2j+2r$ with $0 \leq r<j$, was seen to be the required value $j(k-1-j)$.
\end{proof}
Choosing the free coefficient $c_{e}$ from Theorem \ref{lasteigenV} to be 0 determines $F$ completely from $\varphi$ and $\Psi$, and we have seen in Remark \ref{liftpair} that $\Psi$ determines $\varphi$ (here also when $e=d-1$, since we consider just $\Psi\in\mathcal{M}_{d+1-e,ses}^{nm}(\rho)$). We can therefore write $F$ (with $c_{e}=0$) as $\mathcal{L}_{d+1+e,d}^{V,j}(\Psi)$ also in this case. Note that the construction with $j>0$, using the operator $(-1)^{j}\frac{(d-1-j)!}{(d-j)!}\tilde{\delta}_{k-2j}^{j}$, does not preserve the vanishing of $c_{e}$. The lift $\mathcal{L}_{d+1+e,d}^{V,j}(\Psi)$ is therefore not a multiple of $\tilde{\delta}_{k-2j}^{j}\big(\mathcal{L}_{d+1+e-2j,d-j}^{V,0}(\Psi)\big)$, but a linear combination of the latter function and the lift $\mathcal{L}_{d+1+e,e}^{V,j}(\varphi)$.

\smallskip

We can now gather the results about all the eigenforms of $\Delta_{k-\infty}^{V}$. We denote the lower integral part of $\frac{k-1}{2}$ (which is $\frac{k-1}{2}$ for odd $k$ and $\frac{k-2}{2}$ for even $k$) by $\big\lfloor\frac{k-1}{2}\big\rfloor$, and the upper integral part of $\frac{k}{2}$ (this is $\frac{k}{2}$ if $k$ is even and $\frac{k+1}{2}$ if it is odd) by $\big\lceil\frac{k}{2}\big\rceil$.
\begin{cor}
Fix a group $\Gamma$, a weight $k$, a multiplier system $\rho$, and an eigenvalue $\lambda$, and consider the number of natural solutions $j$ to the equality $\lambda=j(k-1-j)$.
\begin{enumerate}
\item If there are none, then $\mathcal{M}_{k-\infty,\lambda}^{sing,V}(V_{\infty}\otimes\rho)$ is just the space $\mathcal{L}_{k,0}^{V,\lambda}\big(\mathcal{M}_{k,\lambda}^{sing}(\rho)\big)$ from Proposition \ref{eigenVd0}.
\item In case there is a single solution $j$ to the equation in question, the space $\mathcal{M}_{k-\infty,\lambda}^{sing,V}(V_{\infty}\otimes\rho)$ is obtained from $\mathcal{L}_{k,0}^{V,\lambda}\big(\mathcal{M}_{k,\lambda}^{sing}(\rho)\big)$ by adding the direct sum $\bigoplus_{d=j+1}^{\infty}\mathcal{L}_{k,d}^{V,j}\big(\mathcal{M}_{k-2d}^{mer}(\rho)\big)$ of the spaces from Theorem \ref{eigenVgen}.
\item Assume now that there are two distinct solutions $j_{+}>j_{-}$, so that the weight $k$ is $j_{+}+j_{-}+1$. Then $\mathcal{M}_{k-\infty,\lambda}^{sing,V}(V_{\infty}\otimes\rho)$ is the direct sum of $\bigoplus_{d=j_{+}+1}^{\infty}\mathcal{L}_{k,d}^{V,j_{-}}\big(\mathcal{M}_{k-2d}^{mer}(\rho)\big)$, the spaces $\bigoplus_{d=\lceil k/2 \rceil}^{j_{+}-1}\!\mathcal{L}_{k,d}^{V,j_{-}}\big(\mathcal{M}_{2d+2-k,ses}^{nm}(\rho)\big)$ from Theorem \ref{lasteigenV}, and the space from Proposition \ref{s0ses}. The latter space is denoted by $\mathcal{L}_{k,j_{+}}^{V,j_{-}}\big(\mathcal{M}_{j_{+}+1-j_{-},ses}^{sing}(\rho)\big)$ when $j_{+}>j_{-}-1$ by Remark \ref{liftpair}, and if $j_{-}=j_{+}-1$ then elements of this space depend on pairs $(\Psi,\varphi)$ with $\Psi\in\mathcal{M}_{2,ses}^{sing}(\rho)$ and $\varphi\in\mathcal{M}_{0}^{mer}(\rho)$ with $\Delta_{2}\Psi=\partial_{\tau}\varphi$.
\end{enumerate} \label{eigenVtot}
\end{cor}

\begin{proof}
Proposition \ref{eigenVd0} shows that the space $\mathcal{L}_{k,0}^{V,\lambda}\big(\mathcal{M}_{k,\lambda}^{sing}(\rho)\big)$ is always contained in $\mathcal{M}_{k-\infty,\lambda}^{sing,V}(V_{\infty}\otimes\rho)$. Moreover, if a solution $j$ to the equality with $\lambda$ exists, then Theorem \ref{eigenVgen} shows that for large enough $d$ (this means $d>j$ in case $j$ is the unique solution and $d \geq k$ if two solutions exist), the space $\mathcal{L}_{k,d}^{V,j}\big(\mathcal{M}_{k-2d}^{mer}(\rho)\big)$ is contained in $\mathcal{M}_{k-\infty,\lambda}^{sing,V}(V_{\infty}\otimes\rho)$ as well. Assuming that there are two solutions $j_{+}>j_{-}$, Remark \ref{deptheigen} explains why we use $j_{-}$ in the notation of the lift, and the extension of $\mathcal{L}_{k,d}^{V,j_{-}}$ to indices $j_{+}<d \leq k$ is the content of Proposition \ref{liftprobwt}. The space from Proposition \ref{s0ses}, the notation for which is explained in Remark \ref{liftpair} (which also shows that this is the case with $d=j_{+}$ and $e=j_{-}$), contains $\mathcal{L}_{k,0}^{V,\lambda}\big(\mathcal{M}_{k,\lambda}^{sing}(\rho)\big)$ as the elements in which $\Psi$ is in the subspace $\mathcal{M}_{j_{+}+1-j_{-},0}^{sing}(\rho)$ of $\mathcal{M}_{j_{+}+1-j_{-},ses}^{sing}(\rho)$ (to which we attach the constant $\varphi=0$ if $j_{-}=j_{+}-1$). Indeed, it follows from Lemma \ref{LDeldelmer} that $\delta_{j_{+}+1-j_{-}}^{j_{-}}$ defines an isomorphism between $\mathcal{M}_{j_{+}+1-j_{-},0}^{sing}(\rho)$ and $\mathcal{M}_{j_{+}+j_{-}+1,j_{+}j_{-}}^{sing}(\rho)$. Finally, the conditions on $d$ and $e$ in Theorem \ref{lasteigenV}, which define $\mathcal{L}_{k,d}^{V,j_{-}}\big(\mathcal{M}_{d+1-e,ses}^{nh}(\rho)\big)$ as a subspace of $\mathcal{M}_{k-\infty,\lambda}^{sing,V}(V_{\infty}\otimes\rho)$, are equivalent to $\big\lceil\frac{k}{2}\big\rceil \leq d<j_{+}$. Since $k=d+1+e$, the weight $d+1-e$ is therefore indeed $2d+2-k$. Note that while Remark \ref{deptheigen} indicates the existence of the spaces $\mathcal{L}_{k,e}^{V,j}\big(\mathcal{M}_{k-2e}^{mer}(\rho)\big)$ for $j_{-}<e\leq\big\lfloor\frac{k-1}{2}\big\rfloor$ (where $k>2d$), such elements are obtained as $\mathcal{L}_{k,d}^{V,j_{-}}(\Psi)$ for $\Psi$ in the subspace $\mathcal{M}_{d+1-e}^{mer}(\rho)$ of $\mathcal{M}_{d+1-e,ses}^{nh}(\rho)$, for $d=k-1-e>e$. The sum is a direct sum since elements of different lifts have different depths (note that for integral $k$, if the parameter $d$ satisfies $\big\lceil\frac{k}{2}\big\rceil \leq d \leq j_{+}$, then we have seen that the depth is not $d$ when $\Delta_{2d+2-k}\Psi$ is trivial).

It remains to show why these are the only eigenfunctions of $\Delta_{k}^{V}$. The fact that the only eigenfunctions of depth 0 are the images of $\mathcal{L}_{k,0}^{V,\lambda}$ is obvious from the proof of Proposition \ref{eigenVd0}. We may thus consider only eigenfunctions of depth $d\geq1$. Let us assume that the direct sum in question produces all the eigenfunctions in $\mathcal{M}_{k-\infty,\lambda}^{sing,V}(V_{\infty}\otimes\rho)$ having depth $<d$ (a statement we just proved for $d=1$), and take an element $F\in\mathcal{M}_{k-\infty,\lambda}^{sing,V,d}(V_{\infty}\otimes\rho)$, expanded as in Lemma \ref{Lapeval}. Then Corollary \ref{compeigen} shows that $F_{d}=\varphi_{d}$ is meromorphic, and every function $F_{s-1}$ with $1<s<d$ is determined by $F_{s}$ and $F_{s+1}$ up to meromorphic functions (because the equality associated with $s$ in Corollary \ref{compeigen} involves $F_{s-1}$ only via its image under $4y^{2}\partial_{\overline{\tau}}$). Assume first that $k$ is not a positive integer. Then Lemma \ref{LDeldelmer} implies that a natural $4y^{2}\partial_{\overline{\tau}}$-pre-image of a function of the form $\delta_{k-2t}^{t-s}\varphi_{t}$ with $s \leq t \leq d$ is the appropriate multiple of $\delta_{k-2t}^{t+1-s}\varphi_{t}$, with eigenvalue $(t+1-s)(k-t-s)$. Therefore $F_{s}$ with $1 \leq s \leq d$ must be a linear combination of functions of the sort $\sum_{t=s}^{d}\alpha_{d,s,t}\delta_{k-2t}^{t-s}\varphi_{t}$ with $\varphi_{t}\in\mathcal{M}_{k-2t}^{mer}(\rho)$. Moreover, in this sum the summand associated with $t$ has the eigenvalue $(t-s)(k-1-t-s)$ with respect to $\Delta_{k-2s}$, and all these eigenvalues are distinct. Concentrating on the eigenfunction with eigenvalue $(d-s)(k-1-d-s)$ (associated with $\varphi_{d}$), we arrive at the same equalities from Theorem \ref{eigenVgen}. Therefore the eigenvalue is one of those from that theorem, our element $F$ is $\mathcal{L}_{k,d}^{V,j}(\varphi_{d})$ plus an element of smaller depth, and we are done. If $k$ is a positive integer, then at some point one of the functions $\delta_{k-2t}^{t-s}\varphi_{t}$ may not be a pre-image, but we can then replace it by the appropriate sesqui-harmonic form. Using Proposition \ref{s0ses} or Theorem \ref{lasteigenV} we again find that $F$ is a lift $\mathcal{L}_{k,d}^{V,j}(\Psi)$ plus an element of smaller depth, which proves our claim also in this case.
\end{proof}

The functions appearing in the sequences $\{F_{s}\}_{s}$ associated via Theorem \ref{QMFVVMF} to the elements of $\mathcal{M}_{k-\infty,\lambda}^{sing,V}(V_{\infty}\otimes\rho)$ considered in Corollary \ref{eigenVtot} are reminiscent of the modular forms called \emph{almost harmonic Maa\ss\ forms} in \cite{[BFo]} (which in our terminology would be called \emph{nearly harmonic Maa\ss\ forms}). Indeed, our functions are sums of images of meromorphic modular forms (of different weights) under powers of weight raising operators. However, that reference requires that the weight raising operators will always operate on the same modular form (which can more generally be a harmonic weak Maa\ss\ form), but on the other hand allows multiplication by other nearly holomorphic modular forms, thus producing, in general, objects that are different from ours.

\begin{rmk}
The dependence, mentioned in Remark \ref{liftpair}, of an element of the space from Proposition \ref{s0ses} on $\varphi$ up to constants when $k=2d$ and $\lambda=d(d-1)$ (which means $j_{+}=d$, and $j_{-}=d-1$) appears only in the function $F_{d}$ (since $F_{s}$ with $1 \leq s<d$ is a multiple of $\delta_{0}^{d-s}\varphi=\delta_{2}^{d-1-s}(\partial_{\tau}\varphi)$. This is related to the fact, which one easily verifies directly using Corollary \ref{compeigen}, that the function sending $\tau$ to $\frac{1}{(-2iy)^{d}}\binom{\tau}{1}^{\infty-d}\binom{\overline{\tau}}{1}^{d}$ lies in $\mathcal{M}_{k-\infty,d(d-1)}^{sing,V,d}(V_{\infty}\otimes\rho)$. We can write this function as the image under $\mathcal{L}_{2d,d}^{V,d-1}$ of the pair with $\Psi=0$ and $\varphi=1$. As a related example, consider the element of $\mathcal{M}_{1}^{mer}(\Gamma,V_{1})$ denoted $w$ in Equation (13) of \cite{[Ze1]}, which in our notation sends $\tau$ to $\mathbf{G}(\tau)\binom{\tau}{1}+\frac{1}{2iy}\binom{\overline{\tau}}{1}=G(\tau)\binom{\tau}{1}+\binom{1}{0}$. Its image in the limit of the $i_{m}$'s is meromorphic, hence with eigenvalue 0 (with $j=0$), and it can be described as the lift $\mathcal{L}_{2,1}^{V,0}$ of the pair with $\Psi=\mathbf{G}$ and $\varphi=1$. Observing that $\mathbf{G}$ is harmonic (since $\Delta_{2}$ is $\delta_{0}\circ4y^{2}\partial_{\overline{\tau}}$ and $\delta_{0}=\partial_{\tau}$ annihilates $4y^{2}\partial_{\overline{\tau}}\mathbf{G}=1$), subtracting its image under $\mathcal{L}_{2,0}^{V,0}$ we again end up with the pair $\Psi=0$ and $\varphi=1$. This meromorphicity is, in fact, a special case of a much more general result---see Theorem \ref{harmmer} below. \label{consteigen}
\end{rmk}

\subsection{Simpler Description of the Eigenforms}

We recall that eigenfunctions of $\Delta_{k-\infty}^{V}$ in $\mathcal{M}_{k-\infty}^{sing}(V_{\infty}\otimes\rho)$ translate to eigenfunctions of $\Delta_{k}^{V}$ in $\widetilde{\mathcal{M}}_{k}^{sing}(\rho)$. In the realm of quasi-modular forms we have some natural candidates for eigenfunctions, which should be obtained using what we have proved. Indeed, since $\Delta_{k}^{V}$ is just $\Delta_{k}$ on the subspace $\mathcal{M}_{k}^{sing}(\rho)$ of $\widetilde{\mathcal{M}}_{k}^{sing}(\rho)$, elements of $\widetilde{\mathcal{M}}_{k,\lambda}^{sing}(\rho)$ are eigenfunctions of $\Delta_{k}^{V}$ as well. Moreover, the fact that $\Delta_{k}^{V}$ annihilates meromorphic functions shows that all the elements of $\widetilde{\mathcal{M}}_{k}^{mer}(\rho)$ are $\Delta_{k}^{V}$-harmonic. Moreover, considering $\Delta_{k}^{V}=\Delta_{k}$ as an operator on functions on $\mathcal{H}$, a repeated application of the commutation relations from Lemma \ref{LDeldelmer} (as done in the proof of Corollary \ref{coeffLkdV}) shows that applying $\delta_{k-2j}^{j}$ to an element of $\widetilde{\mathcal{M}}_{k-2j}^{mer}(\rho)$ (of some depth $d\geq0$) produces an element $\widetilde{\mathcal{M}}_{k}^{sing}(\rho)$ that is an eigenfunction of $\Delta_{k}^{V}$ with eigenvalue $j(k-1-j)$. Let us now examine the relations between these quasi-modular forms and the ones considered in Corollary \ref{eigenVtot}. We shall also see in Corollary \ref{geneigen} below that the natural quasi-modular eigenforms are the \emph{only} quasi-modular eigenforms.

\smallskip

We shall encounter in some evaluations below several sums of the following type, generalizing the vanishing of expressions of the form $(1-1)^{n}$ for $n>0$.
\begin{lem}
Let $r$ and $h$ be non-negative integers, and take another integer $r\leq\nu \leq h+r$. Then the sum $\sum_{s=\nu}^{h+r}\binom{h}{s-r}(-1)^{s}$ equals $(-1)^{r}$ if $h=0$ (and $\nu=r$) and $(-1)^{\nu}\binom{h-1}{\nu-r-1}$ if $h>0$, while $\sum_{s=r}^{\nu}\binom{h}{s-r}(-1)^{s}$ produces again $(-1)^{r}$ for $h=0$ (with $\nu=r$) and $(-1)^{\nu}\binom{h-1}{\nu-r}$ for $h>0$. \label{cancbinom}
\end{lem}
This lemma is probably well-known, but we give the proof since it is very simple. Note that the classical vanishing is indeed obtained for $\nu=r$ in the first sum from Lemma \ref{cancbinom}, as well as for $\nu=h+r$ in the second sum there.
\begin{proof}
The two statements for $h=0$ (which coincide) are immediate (a sum of a single term). Assuming now that $h>0$, we decompose the binomial coefficient $\binom{h}{s-r}$ as $\binom{h-1}{s-r}+\binom{h-1}{s-r-1}$ as in the proof of Lemma \ref{prodtrans}, and in the terms arising from the second summand we substitute $s+1$ for $s$. As this inverts the sign, all the terms with $\nu \leq s \leq h+r-1$ in the first sum (the one from $\nu$) cancel, and the same assertion holds for the terms with $r \leq s\leq\nu-1$ in the second sum (the one up to $\nu$). Moreover, the term associated with $s=h+r$ in the first sum and the one with $s=r-1$ in the second vanish as well, since then $s-r=h>h-1$ and $s-r=-1<0$ respectively. The only surviving terms now are the asserted ones.
\end{proof}

We shall also need the value of the following sum.
\begin{lem}
Let $0 \leq r<e<d$ and $1 \leq h \leq e-r$ be integers. Then $\sum_{m=e-r-h+1}^{e-r}\frac{(-1)^{e-m}}{m(d-e+m)}\binom{h-1}{e-r-m}$ equals $\frac{(-1)^{r-h-1}}{d-e}(h-1)!\big(\frac{(e-r-h)!}{(e-r)!}-\frac{(d-r-h)!}{(d-r)!}\big)$. \label{sumeval}
\end{lem}
It is possible that the expressions from Lemma \ref{sumeval} can be related, with an appropriate additional function, to what is called a \emph{Wilf--Zeilberger pair} in \cite{[WZ]} and others, and therefore this lemma would be a consequence of that reference. However, in the spirit of the appendix to \cite{[P2]} (which deals with a similar sum), we give a direct proof.

\begin{proof}
First we prove, by induction on $p$, that if $p\geq0$ and $t\geq1$ are integers, then $\sum_{l=0}^{p}\binom{p}{l}\frac{(-1)^{l}}{l+t}$ equals $\frac{p!(t-1)!}{(t+p)!}$. Indeed, if $p=0$ then both expressions are $\frac{1}{t}$. Now we assume that $p>0$ and break up $\binom{p}{l}$ as $\binom{p-1}{l}+\binom{p-1}{l-1}$, where the sum of the first terms in this decomposition is the one associated with $p-1$ and $t$. On the other hand, replacing $l$ by $l+1$ in the sum arising from the second term is easily seen to give us minus the expression coming from $p-1$ and $t+1$. Applying the induction hypothesis for both sums, we obtain the difference between $\frac{(p-1)!(t-1)!}{(t+p-1)!}$ and $\frac{(p-1)!t!}{(t+p)!}$. Taking out the common multiplier $\frac{(p-1)!(t-1)!}{(t+p)!}$, the remaining terms form the difference $(t+p)-t=p$, combining with the first factorial to give the asserted expression.

Let us now consider the sum in question. The substitution $m=e-r-h+1+l$ produces the sum $(-1)^{r-h-1}\sum_{l=0}^{h-1}\binom{h-1}{h-1-l}\frac{(-1)^{l}}{(e-r-h+1+l)(d-r-h+1+l)}$, and we can express the latter multiplier as $\frac{(-1)^{l}}{d-e}\big(\frac{1}{e-r-h+1+l}-\frac{1}{d-r-h+1+l}\big)$. The symmetry of binomial coefficients therefore presents our expression as $\frac{(-1)^{r-h-1}}{d-e}$ times the difference between two expressions, both of which take the form from the previous paragraph. In both expressions $p$ is $h-1$, and the values of $t$ are $e-r-h+1$ and $d-r-h+1$ respectively. A simple substitution therefore produces the asserted result.
\end{proof}

We shall also require the (holomorphic) derivatives of the modular form $\Psi$ from Lemma \ref{sesonholder}.
\begin{lem}
Consider the sesqui-harmonic modular form $\Psi$ defined in Lemma \ref{sesonholder}, and take $l\in\mathbb{N}$. Then $\partial_{\tau}^{l}\Psi$ is the sum of a meromorphic quasi-modular form and the expression $\sum_{h=1}^{\upsilon+l}\frac{(\upsilon+l)!}{(\upsilon+l-h)! \cdot h\upsilon}\cdot\frac{\partial_{\tau}^{\upsilon+l-h}\varphi}{(-2iy)^{h}}-\sum_{h=1}^{l}\frac{l!}{(l-h)! \cdot h\upsilon}\cdot\frac{\partial_{\tau}^{\upsilon+l-h}\varphi}{(-2iy)^{h}}$. \label{Psideriv}
\end{lem}

\begin{proof}
The image of the meromorphic part of $\Psi$ under the holomorphic operator $\partial_{\tau}^{l}$ is clearly meromorphic. On the other hand, the general Leibniz rule implies that $\partial_{\tau}^{l}$ sends each of the terms $\frac{\partial_{\tau}^{\upsilon-p}\varphi}{(-2iy)^{p}}$ to $\sum_{t=0}^{l}\binom{l}{t}\partial_{\tau}^{\upsilon+l-p-t}\varphi\cdot\partial_{\tau}^{t}\frac{1}{(-2iy)^{p}}$. A simple induction on $t$ shows, using the fact that this denominator is a power of $\overline{\tau}-\tau$, that the latter multiplier is $\frac{(p+t-1)!}{(p-1)!(-2iy)^{p+t}}$. Putting in the multiplier $\frac{(\upsilon-1)!}{(\upsilon-p)! \cdot p}$, summing over $p$ as well, and using the summation index $h=p+t$, we get $\sum_{h=1}^{\upsilon+l}\sum_{p=\max\{1,h-l\}}^{\min\{h,\upsilon\}}\frac{(\upsilon-1)!(h-1)!l!}{p!(\upsilon-p)!(h-p)!(l-h+p)!}\frac{\partial_{\tau}^{\upsilon+l-h}\varphi}{(-2iy)^{h}}$. Note that $p$ runs over the entire range of numbers for which the factorials in the denominator operate on non-negative integers, except for the value $p=0$ when $h \leq l$. We add and subtract the terms with $h \leq l$ and $p=0$, which are easily seen to produce the second asserted sum (after the minus). In the remaining expression we write the quotient of factorials as the product of $\frac{(\upsilon-1)!l!}{(\upsilon+l-h)! \cdot h}$ and $\binom{h}{p}\binom{\upsilon+l-h}{\upsilon-p}$, and the internal sum over $p$ therefore reduces to just $\binom{\upsilon+l}{\upsilon}$. Multiplying this binomial coefficient by the external quotient produces the asserted coefficient from the first required sum.
\end{proof}

\begin{thm}
The lifts $\mathcal{L}_{k,d}^{V,0}\big(\mathcal{M}_{k-2d}^{mer}(\rho)\big)$ from the case $j=0$ of Theorem \ref{eigenVgen}, in which $k$ is not an integer between $d+1$ and $2d$, are meromorphic. In case $k=d+1+e$ with $0 \leq e<d$ (and again $j=0$), the lifts $\mathcal{L}_{k,d}^{V,0}\big(\mathcal{M}_{2d+2-k,ses}^{nm}(\rho)\big)$, described in Theorem \ref{lasteigenV} for $e>0$ and as the appropriate restriction of Proposition \ref{s0ses} in case $e=0$, also have this property. \label{harmmer}
\end{thm}

\begin{proof}
We recall from Theorem \ref{QMFVVMF} that the meromorphicity of an element of $\mathcal{M}_{k-\infty}^{sing}(V_{\infty}\otimes\rho)$, of some depth $d$, is equivalent to the meromorphicity of the components $f_{r}$, $0 \leq r \leq d$. We shall therefore evaluate these quasi-modular forms using the formula from that theorem, where $F_{s}$, $0 \leq s \leq d$ are given as the lifts with $j=0$ in Theorem \ref{eigenVgen}, Proposition \ref{s0ses} (with $e=0$), and Theorem \ref{lasteigenV}. For the lifts $\mathcal{L}_{k,d}^{V,0}$ from Theorem \ref{eigenVgen} and Remark \ref{deptheigen} the evaluations are precisely those appearing in the proof of Proposition \ref{eigenAtypes}, showing that $f_{r}$ is the meromorphic quasi-modular form $\binom{d}{r}(-1)^{r}\partial_{\tau}^{d-r}\varphi\Big/\prod_{q=0}^{d-r-1}(k-2d+q)$. This argument covers the lifts $\mathcal{L}_{k,d}^{V,0}$ for any $k$ that is not an integer between $d+1$ and $2d$, as well as the functions $f_{r}$ with $e<r \leq d$ when $k=d+1+e$ with $0 \leq e<d$ (as the expression for $F_{s}$ with $e<s \leq d$ in Proposition \ref{s0ses} and Theorem \ref{lasteigenV} is defined by the formula from Theorem \ref{eigenVgen} as well).

We now turn our attention to the remaining functions, in which some of the functions $F_{s}$ involve the sesqui-harmonic modular form $\Psi$ from Lemma \ref{sesonholder}. We therefore take some $r \leq e<d$ and use the expressions from Theorem \ref{lasteigenV} with $j=0$ for the functions $F_{s}$, $0 \leq s \leq d$ again (we shall soon see that the case considered in Proposition \ref{s0ses} becomes a special case of this construction). The functions with $s>e$ are defined in the same manner as above, so that the considerations from the proof of Proposition \ref{eigenAtypes} show that their contribution to $f_{r}$ with $0 \leq r \leq e$ amounts to
\[\binom{d}{r}\sum_{h=e+1-r}^{d-r}\binom{d-r}{h}\frac{(r+h-e-1)!}{(d-e-1)!}\Bigg[\sum_{s=e+1}^{h+r}(-1)^{d-h+s-r}\binom{h}{s-r}\bigg]\cdot\frac{\partial_{\tau}^{d-r-h}\varphi}{(2iy)^{h}}\] (since $h=s+t-r$ is at least $e+1-r$ for $s>e$ and $t\geq0$, and the quotient over $q$ in the denominator becomes the quotient of factorials with the extra sign). Lemma \ref{cancbinom} now reduces the sum over $s$ to just $(-1)^{d-h+e-r+1}\binom{h-1}{e-r}$, and after expanding this binomial coefficient as well as $\binom{d-r}{h}$ and canceling, we find that every summand $\frac{\partial_{\tau}^{d-r-h}\varphi}{(2iy)^{h}}$ with $e-r<h \leq d-r$ appears with the coefficient $\binom{d}{r}\frac{(-1)^{d-h+e-r+1}}{(e-r)!(d-e-1)!}\cdot\frac{(d-r)!}{(d-r-h)! \cdot h}$.

Now, the proof of Theorem \ref{lasteigenV} shows that for $0 \leq s \leq e$ the number $b_{s}$ (with $j=0$) is $(e-d)\binom{d}{s}\Big/\prod_{q=0,q \neq d-e-1}^{d-s-1}(k-2d+q)$. With the weight $k=d+1+e$ this expression becomes $\frac{(-1)^{d-e}(d-e)}{(e-s)!(d-e-1)!}\binom{d}{s}$ (by the same considerations). Note that the case $d=e=s=0$ produces the coefficient $\frac{(-1)^{d}d}{(d-1)!}$, which is the one appearing in front of $\Psi$ in $F_{0}$ in the case $e=0$ of Proposition \ref{s0ses} with $e=0$. It follows that if we assume that the function $\Psi$ in that proposition satisfies the stronger condition from Lemma \ref{sesonholder} (i.e., with the statement about $4y^{2}\partial_{\overline{\tau}}\Psi$ and not just $\Delta_{d+1}\Psi$), then the resulting function is indeed a ``special case'' of the ones considered in Theorem \ref{lasteigenV}. Finally, the coefficient $c_{s}$ was seen to be the same coefficient $b_{s}$ multiplied by the sum $\sum_{m=1}^{e-s}\frac{1}{m(d-e+m)}$, and the function $\delta_{e+1-d}^{d-s}\varphi$ that it multiplies equals $\delta_{d+1-e}^{e-s}(\partial_{\tau}^{d-e}\varphi)$ by applying Bol's identity one more time.

We analyze the resulting expression using the same tools from the proof of Proposition \ref{eigenAtypes}. Ignoring the common constant coefficient $\frac{(-1)^{d-e}(d-e)}{(d-e-1)!}$ for the moment, we expand the two instances of the operator $\delta_{d+1-e}^{e-s}$ as in Equation (56) of \cite{[Za]} (with the summation index $t$, in which the inner product is $\frac{(d-s)!}{(d-s-t)!}$), substitute into the formula for $f_{r}$ in Theorem \ref{QMFVVMF} (noting the different signs in the powers of $2iy$ in the denominators), write the combinatorial expression $\binom{s}{r}\binom{d}{s}\binom{e-s}{t}\frac{(d-s)!}{(e-s)!(d-s-t)!}$ as $\binom{d}{r}\binom{d-r}{s+t-r}\binom{s+t-r}{s-r}\frac{1}{(e-s-t)!}$, and change the summation index to $h=s+t-r$ (now bounded by $e-r$) to obtain
\[\binom{d}{r}\sum_{h=0}^{e-r}\frac{\binom{d-r}{h}}{(e-r-h)!}\sum_{s=r}^{h+r}(-1)^{s}\binom{h}{s-r}\Bigg[\frac{\partial_{\tau}^{e-r-h}\Psi}{(2iy)^{h}}+
\sum_{m=1}^{e-s}\frac{\partial_{\tau}^{e-r-h}\varphi}{m(d-e+m)(2iy)^{h}}\bigg].\] Noting that the inner term involving $\Psi$ is independent of $s$, the same argument from above implies that for these terms the sum over $s$ reduces to $(-1)^{r}\delta_{h,0}$. Hence the total expression involving $\Psi$ in the above formula is just $\binom{d}{r}\frac{(-1)^{r}}{(e-r)!}\partial_{\tau}^{e-r}\Psi$. On the other hand, the terms involving $\varphi$ here appear only if $r<e$ (for an integer $1 \leq m \leq e-s \leq e-r$ to exist). When we evaluate, for some $0 \leq h \leq e-r$, the coefficient of $\binom{d}{r}\binom{d-r}{h}\frac{\partial_{\tau}^{e-r-h}\varphi}{(e-r-h)!(2iy)^{h}}$, we may interchange the order of summation between $m$ and $s$ and get the expression
$\sum_{m=1}^{e-r}\sum_{s=r}^{\min\{h+r,e-m\}}\binom{h}{s-r}\frac{(-1)^{s}}{m(d-e+m)}$. Now, for $m \leq e-r-h$ the sum over $s$ is until $h+r$, reducing to just $\delta_{0,h}$, and for $h=0$ the inequality on $m$ always holds. If $m>e-r-h$ we apply Lemma \ref{cancbinom}, and evaluate the sum over $s$ as $\frac{(-1)^{e-m}}{m(d-e+m)}\binom{h-1}{e-r-m}$. The terms with $h=0$ are all meromorphic, while for $h\geq1$ the sum of the previous expression, which must be carried out over $e-r-h+1 \leq m \leq e-r$, is evaluated in Lemma \ref{sumeval}. Plugging in the expressions from that lemma, and recalling the external coefficient $\frac{(-1)^{d-e}(d-e)}{(d-e-1)!}$, we find that for each $1 \leq h \leq e-r$ the coefficient of $\frac{\partial_{\tau}^{e-r-h}\varphi}{(2iy)^{h}}$ is, in total, $\binom{d}{r}\frac{(-1)^{d+h-e+r}}{(d-e-1)! \cdot h}\big(\frac{1}{(e-r-h)!}-\frac{(d-r)!}{(d-r-h)!(e-r)!}\big)$.

Gathering all these terms, the expression $\frac{\partial_{\tau}^{e-r-h}\varphi}{(2iy)^{h}}$ appears, for every index $1 \leq h \leq d-r$, with the coefficient $\binom{d}{r}\frac{(-1)^{d-h+e-r+1}}{(e-r)!(d-e-1)!}\cdot\frac{(d-r)!}{(d-r-h)! \cdot h}$, while for $1 \leq h \leq e-r$ we also have the extra coefficient $\binom{d}{r}\frac{(-1)^{d+h-e+r}}{(d-e-1)!(e-r-h)! \cdot h}$. To all this we must add the expression $\binom{d}{r}\frac{(-1)^{d-e+r}(d-e)}{(d-e-1)!(e-r)!}\partial_{\tau}^{e-r}\Psi$, for the evaluation of which we apply Lemma \ref{Psideriv} with the parameters $\upsilon=d-e$ and $l=e-r$. This yields a meromorphic quasi-modular form plus an explicit expression, in which the number $\upsilon=d-e$ alone cancels. For each $h$ the fact that the denominator in that lemma involves a power of $-2iy$ gives the right sign, and since $\upsilon+l=d-r$ and for $h \leq l=e-r$ the terms with $(e-r)!$ cancel, we find that indeed the non-meromorphic parts arising from the derivative of $\Psi$ are exactly the opposite of those involving $\varphi$ in $f_{r}$. Therefore only meromorphic expressions remain in each $f_{r}$ also in this case.
\end{proof}

Combining Theorem \ref{harmmer} with Corollary \ref{eigenVtot} allows us to obtain the general form of any eigenfunction of $\Delta_{k-\infty}^{V}$. Similarly to the definition of the powers $\delta_{l}^{m}$, we define the operator $\tilde{\delta}_{l}^{m}$ on $\mathcal{M}_{l-\infty}^{sing}(V_{\infty}\otimes\rho)$ for some weight $l$ and some power $m$ to be the composition $\tilde{\delta}_{l+2m-2}\circ\ldots\circ\tilde{\delta}_{l}$, sending elements of the latter space into $\mathcal{M}_{l+2m-\infty}^{sing}(V_{\infty}\otimes\rho)$.
\begin{cor}
Let $k$ be a weight and $\lambda$ be an eigenvalue. If there is no natural solution $j$ to the equation $\lambda=j(k-1-j)$, then $\mathcal{M}_{k-\infty,\lambda}^{sing,V}(V_{\infty}\otimes\rho)$ consists only of elements of depth 0, and they are obtained from $\mathcal{M}_{k,\lambda}^{sing}(\rho)$ via the lift $\mathcal{L}_{k,0}^{V,\lambda}$ from Proposition \ref{eigenVd0}. Otherwise we take the minimal such solution $j$ (it is either the single one or the smaller between two integers), and then $\mathcal{M}_{k-\infty,\lambda}^{sing,V}(V_{\infty}\otimes\rho)$ is the sum of the space $\mathcal{L}_{k,0}^{V,\lambda}\big(\mathcal{M}_{k,\lambda}^{sing}(\rho)\big)$ from above and images of elements of $\mathcal{M}_{k-2j-\infty}^{mer}(V_{\infty}\otimes\rho)$ under the operator $\tilde{\delta}_{k-2j}^{j}$. \label{eigenVform}
\end{cor}

\begin{proof}
The first assertion is just part $(i)$ of Corollary \ref{eigenVtot}. For the second one, note that Lemma \ref{LDeldelmer} continues to hold if we replace the spaces of the form $\mathcal{M}_{l,\mu}^{sing}(\rho)$ by the corresponding spaces $\mathcal{M}_{l-\infty,\mu}^{sing,V}(V_{\infty}\otimes\rho)$ and the operator $\delta_{l}$ by the operator $\tilde{\delta}_{l}$ from Corollary \ref{comim}. This can either be seen using part $(vi)$ of Proposition \ref{comrels} (showing that the commutation relation between the $\tilde{\delta}_{l}$'s and $y^{2}\partial_{\overline{\tau}}$ are the same as for the $\delta_{l}$'s), or via Theorem \ref{difopShim} and the relations with $\widetilde{\mathcal{M}}_{l}^{sing}(\rho)$ (after the limit map from Corollary \ref{comim}). It follows that $\tilde{\delta}_{k-2j}^{j}$ maps $\mathcal{M}_{k-2j-\infty,0}^{sing,V}(V_{\infty}\otimes\rho)$ into $\mathcal{M}_{k-\infty,j(k-1-j)}^{sing,V}(V_{\infty}\otimes\rho)$, and applying the operator $y^{2}\partial_{\overline{\tau}}$ repeatedly $j$ times sends the latter space to the former. Moreover, observing that the eigenvalues involved are of the form $r(k-2j+r-1)$ with $0 \leq r<j$ and adding the appropriate weight $k-2j+2r$ simply produces the value with $r+1$, we deduce from our version of Lemma \ref{LDeldelmer} that the composition of these two maps, in both directions, are the identity map multiplied by the scalar $\prod_{r=1}^{j}\frac{r(2j-r+1-k)}{4}$ (recall the relation between these compositions, the Laplacian, and the convention regarding the eigenvalues). Since the vanishing of this product implies that $k=2j+1-r$ for some $1 \leq r \leq j$, which would compare the value $j(k-1-j)$ of $\lambda$ to $j(j-r)=(j-r)(k-1-j+r)$, this contradicts our assumption that $j$ is the minimal solution to $\lambda=j(k-1-j)$. Hence the two aforementioned maps are isomorphisms, the space $\mathcal{M}_{k-\infty,j(k-1-j)}^{sing,V}(V_{\infty}\otimes\rho)$ is the image of $\mathcal{M}_{k-\infty,j(k-1-j)}^{sing,V}(V_{\infty}\otimes\rho)$ under $\tilde{\delta}_{k-2j}^{j}$, and the latter space is described in Corollary \ref{eigenVtot} and Theorem \ref{harmmer} as the sum of $\mathcal{L}_{k,0}^{V,0}\big(\mathcal{M}_{k-2j,0}^{sing}(\rho)\big)$ and $\mathcal{M}_{k-2j-\infty}^{mer}(V_{\infty}\otimes\rho)$.
\end{proof}

\begin{rmk}
The sum in the second assertion in Corollary \ref{eigenVform} is not direct. Indeed, if $j=0$ then $\mathcal{L}_{k,0}^{V,0}\big(\mathcal{M}_{k}^{mer}(\rho)\big)$ is contained in both spaces (and in fact forms the entire intersection), and in general the intersection consists of the $\mathcal{L}_{k,0}^{V,j(k-1-j)}$-images of those elements of $\mathcal{M}_{k,j(k-1-j)}^{sing}(\rho)\cap\mathcal{M}_{k}^{nm,\leq j}(\rho)$ that are $\delta_{k-2j}^{j}$-images of modular forms from $\mathcal{M}_{k-2j}^{mer}(\rho)$. To see this, apply the isomorphism $\tilde{\delta}_{k-2j}^{j}$ from the proof of Corollary \ref{eigenVform}, and use the commutation relation, following from Theorem \ref{difopShim} via the limit from Corollary \ref{comim}, between that operator, the operator $\delta_{k-2j}^{j}$, and the embeddings $\mathcal{L}_{k-2j,0}^{V,0}$ and $\mathcal{L}_{k,0}^{V,j(k-1-j)}$. As for the solution to $\lambda=j(k-1-j)$ that is not minimal, recall that this happens for $j=j_{+}$ when $k=j_{+}+j_{-}+1$ and $\lambda=j_{+}j_{-}$ with integers $j_{+}>j_{-}\geq0$. The proof of Corollary \ref{eigenVform} then shows that the operator $\tilde{\delta}_{k-2j_{+}}^{j_{+}}=\tilde{\delta}_{j_{-}+1-j_{+}}^{j_{+}}$ decomposes as the composition of $\tilde{\delta}_{k-2j_{-}}^{j_{-}}$ and $\tilde{\delta}_{j_{-}+1-j_{+}}^{j_{+}-j_{-}}$, and the latter operator sends $\mathcal{M}_{j_{-}+1-j_{+}-\infty,0}^{mer,V}(V_{\infty}\otimes\rho)$ into $\mathcal{M}_{j_{+}+1-j_{-}-\infty,0}^{mer,V}(V_{\infty}\otimes\rho)$. Therefore no new functions are obtained from that index. We also remark that heuristically the statement of Corollary \ref{eigenVform} (as well as of many of the previous results) can be extended to determining the eigenfunctions of the appropriate Laplacian on the space $\mathcal{M}_{k-m}^{sing}(V_{m}\otimes\rho)$ of finite $m$. This is so since the lifts $\mathcal{L}_{k,d}^{V,j}$ with $0<d \leq m$ have depth not exceeding $m$, and they can therefore be considered as elements of $\mathcal{M}_{k-m}^{sing}(V_{m}\otimes\rho)$ embedded in $\mathcal{M}_{k-\infty}^{sing}(V_{\infty}\otimes\rho)$ via the usual limit map. Moreover, since in such lifts the function $F_{d}$ (or $f_{d}$) with the maximal index has to be meromorphic, the appropriate Laplacian is well-defined on such elements of $\mathcal{M}_{k-\infty}^{sing}(V_{\infty}\otimes\rho)$ since after letting the operator $4y^{2}\partial_{\overline{\tau}}$ act we land in the space of $i_{m-1}$-images. However, as we have already observed in Remark \ref{LaponVm} that this Laplacian is not defined on the entire space $\mathcal{M}_{k-m}^{sing}(V_{m}\otimes\rho)$, we do not investigate this point of view further. \label{LapVm}
\end{rmk}

\smallskip

The translation of the operator $\Delta_{k-\infty}^{V}$ to an operator on $\widetilde{\mathcal{M}}_{k}^{sing}(\rho)$ via Theorem \ref{difopShim} and the limit maps from Corollary \ref{comim} produces just the usual Laplacian $\Delta_{k}$ from the classical theory of modular forms. Its eigenfunctions in $\widetilde{\mathcal{M}}_{k}^{sing}(\rho)$ can be characterized by translating the results of Corollary \ref{eigenVform} to this setting using Theorem \ref{difopShim} and Corollary \ref{comim}.
\begin{cor}
If $k$ and $\lambda$ are such that the equation $\lambda=j(k-1-j)$ has no natural solution $j$, then every quasi-modular form in $\widetilde{\mathcal{M}}_{k}^{sing}(\rho)$ that has the eigenvalue $\lambda$ is of depth 0 (i.e., is a classical modular eigenform). In case $\lambda$ equals $j(k-1-j)$ for some natural $j$, which we take to be the smaller one in case there are two solutions, the appropriate quasi-modular eigenforms are sums of (classical) modular eigenforms and images of meromorphic quasi-modular forms from $\widetilde{\mathcal{M}}_{k-2j}^{mer}(\rho)$ under $\delta_{k-2j}^{j}$. \label{geneigen}
\end{cor}

\begin{proof}
When we write an element $F$ of $\widetilde{\mathcal{M}}_{k-\infty}^{sing}(V_{\infty}\otimes\rho)$ using the limit image of the holomorphic basis from Theorem \ref{QMFVVMF}, we recall from that theorem that the quasi-modular form $f\in\widetilde{\mathcal{M}}_{k}^{sing}(\rho)$ that is associated to $F$ there is just the coefficient $f_{0}$ of $\binom{\tau}{1}^{\infty}$ in that expansion. As already mentioned in that theorem, the differential properties of $F$ are resembled in those of the coefficients $f_{r}$ from that basis. We also know from Theorem \ref{difopShim} and Corollary \ref{comim} that the operation of $\tilde{\delta}_{k-2j}^{j}$ from Corollary \ref{eigenVform} becomes, when one considers $f_{0}$ alone, just the action of $\delta_{k-2j}^{j}$. Observing that for elements of depth 0 we have $F(\tau)=F_{0}(\tau)\binom{\tau}{1}^{\infty}=f_{0}(\tau)\binom{\tau}{1}^{\infty}$, the required assertions are now easily seen to be the translation of Corollary \ref{eigenVform} to the quasi-modular setting.
\end{proof}

Corollary \ref{geneigen} translates to the the promised statement from the Introduction and from the beginning of this section: A quasi-modular eigenform is a sum of a modular eigenform and (perhaps, for specific eigenvalues) the image of a meromorphic quasi-modular form under the appropriate power of the (classical) weight raising operator.

\begin{rmk}
The proof of Theorem \ref{harmmer} shows (together with the usual argument from Theorem \ref{QMFVVMF}) that when $k$ is not one of the special weights the explicit value of the quasi-modular form $f_{0}$ associated with $\mathcal{L}_{k,d}^{V,0}(\varphi)$ for $\varphi\in\mathcal{M}_{k-2d}^{mer}(\rho)$ is $\partial_{\tau}^{d}\varphi\Big/\prod_{q=0}^{d-1}(k-2d+q)$. Combining this with Corollary \ref{geneigen} and the appropriate normalizations, one sees that $\mathcal{L}_{k,d}^{V,j}(\varphi)$ with any $0 \leq j<d$ corresponds, for such $k$, to $(-1)^{j}\frac{(d-1-j)!}{(d-1)!}\delta_{k-2j}^{j}(\partial_{\tau}^{d-j}\varphi)\Big/\prod_{q=0}^{d-j-1}(k-2d+q)$ (this can also be proved directly, by following the proof of Theorem \ref{harmmer}). In case $k$ does equal one of the special weights, the proof of Lemma \ref{Psideriv} implies that the meromorphic quasi-modular form $f_{0}$ from Theorem \ref{harmmer} involves derivatives of the meromorphic expression $-\sum_{p=1}^{\upsilon}\frac{(\upsilon-1)!}{(\upsilon-p)! \cdot p}\partial_{\tau}^{\upsilon-p}\varphi \cdot G(\tau)^{p}$ from Lemma \ref{sesonholder}. The functions from Corollary \ref{geneigen} are therefore also more complicated in this case, and we shall not write them explicitly. \label{expval}
\end{rmk}

\smallskip

We conclude by combining what we know about eigenfunctions of the two Laplacians to determine those elements of $\mathcal{M}_{k-\infty}^{sing}(V_{\infty}\otimes\rho)$ that are eigenfunctions with respect to the two Laplacian operators simultaneously.
\begin{cor}
The lifts $\mathcal{L}_{k,0}^{V,\lambda}(\varphi)=\mathcal{L}_{k,0}^{A}(\varphi)$ for $\varphi\in\mathcal{M}_{k,\lambda}^{sing}(\rho)$ have eigenvalue $\lambda$ with respect to $\Delta_{k-\infty}^{V}$ and eigenvalue 0 with respect to $\Delta_{k-\infty}^{A}$. Moreover, any lift of the form $\mathcal{L}_{k,d}^{V,0}(\varphi)=\mathcal{L}_{k,d}^{A}(\varphi)$ with $\varphi\in\mathcal{M}_{k-2d}^{mer}(\rho)$ and $k$ not an integer between $d+1$ and $2d$ is $\Delta_{k-\infty}^{V}$-harmonic and has the eigenvalue $d(k-1-d)$ with with respect to $\Delta_{k-\infty}^{A}$. These are the only elements of $\mathcal{M}_{k-\infty}^{sing}(V_{\infty}\otimes\rho)$ that are simultaneous eigenfunctions for both the Laplacian operators. \label{comeigen}
\end{cor}

\begin{proof}
The first assertion is obvious from Proposition \ref{eigenVd0} and Theorem \ref{eigenAgen} (or the fact that $\Delta_{k-\infty}^{A}$ annihilates functions of depth 0). Next, note that the coefficients in $\mathcal{L}_{k,d}^{A}(\varphi)$ appearing in Theorem \ref{eigenAgen} coincide with $a_{d,s}^{(0)}$ defined in Equation \eqref{adsj} inside the proof of Theorem \ref{eigenVgen}. This implies the second assertion. Now, as we have seen that eigenfunctions of $\Delta_{k-\infty}^{V}$ of positive depth $d$ have meromorphic $F_{d}$, the fact that for an integral weight $k$ between $d+1$ and $2d$ Proposition \ref{eigenAsp} forces the function $F_{d}$ in eigenfunctions of $\Delta_{k-\infty}^{A}$ to be non-meromorphic shows that only the lifts from Theorem \ref{eigenVgen} can produce simultaneous eigenfunctions. Moreover, by what we have seen this happens if and only if the coefficients $a_{d,s}^{(j)}$ appearing in $\mathcal{L}_{k,d}^{V,j}(\varphi)$ coincide with $a_{d,s}^{(0)}$ from $\mathcal{L}_{k,d}^{A}(\varphi)$, with $\varphi\in\mathcal{M}_{k-2d}^{mer}(\rho)$. Now, for $s=d-1$ this is equivalent to the vanishing of $\frac{k-j-1}{d-1}$ from $l=1$ (with $p=1$), and Remark \ref{deptheigen} (with $j_{+}=j$ and $j_{-}=0$) shows that in this case $\mathcal{L}_{k,d}^{V,j}(\varphi)$ is $\mathcal{L}_{k,d}^{V,0}(\varphi)$ yet again. Therefore the aforementioned functions are indeed the only simultaneous eigenfunctions for the two Laplacians, proving the third assertion as well.
\end{proof}
The quasi-modular eigenfunctions corresponding via Theorem \ref{QMFVVMF} to the elements from Corollary \ref{comeigen} are just the usual modular eigenforms and the meromorphic quasi-modular forms from the case $j=0$ in Corollary \ref{geneigen}.

\noindent\textsc{Einstein Institute of Mathematics, the Hebrew University of Jerusalem, Edmund Safra Campus, Jerusalem 91904, Israel}

\noindent E-mail address: zemels@math.huji.ac.il

\end{document}